\title{
Asymptotic symmetry and group invariance 
for randomization
}
\author{Adam B Kashlak\\
Mathematical \& Statistical Sciences\\
University of Alberta\\
Edmonton, Canada,  T6G 2G1}
\begin{document}

\maketitle

\begin{abstract}
  Symmetry is a cornerstone of much of mathematics,
  and many probability distributions possess 
  symmetries characterized by their invariance
  to a collection of group actions.
  Thus, many mathematical and statistical methods
  rely on such symmetry holding and ostensibly
  fail if symmetry is broken.
  This work considers under what conditions a 
  sequence of probability measures asymptotically
  gains such symmetry or invariance to a collection
  of group actions.  Considering the many
  symmetries of the Gaussian distribution, this
  work effectively proposes a non-parametric 
  type of central limit theorem.
  That is, a Lipschitz function of a high dimensional
  random vector will be asymptotically invariant 
  to the actions of certain compact topological 
  groups.
  Applications of this include a partial law
  of the iterated logarithm for uniformly random
  points in an $\ell_p^n$-ball and an asymptotic 
  equivalence between classical parametric statistical
  tests and their randomization counterparts even
  when invariance assumptions are violated.
\end{abstract}

\section{Introduction}

The central limit theorem stands as one of the most
fundamental results in probability theory.  In its
essence, sequences of measures properly normalized
asymptotically approach the Gaussian, 
and the standard Gaussian 
measure in, say, $\real^n$ satisfies many symmetries:
reflective, permutative, rotational, etc. 
This elicits a question: under what conditions
will a sequence a measures asymptotically achieve
certain symmetries?

In mathematics, the symmetry of an object is encoded by 
said object's invariance to a collection of group
actions.  For example, the standard Gaussian measure
on $\real^n$ is invariant to any element of the 
orthogonal group $O(n)$ comprised of all
$n\times n$ matrices 
$M:\real^n\rightarrow\real^n$ such that 
$\TT{M}M = M\TT{M} = I$.
Such symmetry is often leveraged in 
mathematical proofs,
statistical estimation and testing procedures,
and elsewhere.  As a consequence, 
the lack of such group 
invariance can quickly invalidate a result.
Thus, this work seeks to determine conditions
under which group invariance is achieved in 
an asymptotic sense.  That is, for a sequence
of random vectors $X^{(n)}\in\real^n$ and a 
sequence of functions $T_n:\real^n\rightarrow\real$
and mappings $\pi_n:\real^n\rightarrow\real^n$,
how close are the random variables 
$T_n( X^{(n)} )$ and $T_n( \pi_nX^{(n)} )$ as 
$n$ grows to infinity?

\begin{example}
  Let $X^{(n)} = (X_1,\ldots,X_n)$ have \iid 
  mean zero unit variance entries and 
  $T_n(X^{(n)}) = n^{-1/2}\sum_{i=1}^n X_i$.
  If $\pi_n$ is a permutation on $n$ elements,
  then obviously
  $$
    T_n(\pi_nX^{(n)}) = n^{-1/2}\sum_{i=1}^n X_{\pi(i)}
    = n^{-1/2}\sum_{i=1}^n X_{i} = T_n(X^{(n)}).
  $$
  If instead $\pi_n$ is a reflection,
  $\pi_n(X_1,\ldots,X_n) = 
  (\veps_1X_1,\ldots,\veps_nX_n)$ for some
  $\veps_i\in\{-1,1\}$, then the distributions
  of
  $T_n(\pi_nX^{(n)})$ and $T_n(X^{(n)})$ only 
  coincide if $X_1 = -X_1$ in distribution.
  Nevertheless, if such reflective symmetry
  fails to hold, $T_n(\pi_nX^{(n)})$ and $T_n(X^{(n)})$
  still asymptotically converge in distribution
  to the same $\distNormal{0}{1}$ distribution.
\end{example}

The main results of this work are contained in 
Section~\ref{sec:RandTest}.  
Theorem~\ref{thm:condProb} 
and Corollary~\ref{cor:condProb}
demonstrate the almost sure equality 
between a probability measure on some 
Hilbert space and Haar measure on 
some subset of a compact topological group $G$.
These results are highly reminiscent of similar
results from \cite{LEHMANN2006} and
\cite{HEMERIK2018} and others.
The novel theorem in this work is
Theorem~\ref{thm:asympInvariance} with its 
paired Corollary~\ref{cor:asympSize}. 
These extend the previous results by considering
conditions on the random vector $X^{(n)}$,
the function $T_n$, and the group of symmetries $G$
under which
almost sure convergence and convergence in mean
between the two probabilities occur.

The first application comes in 
Section~\ref{sec:lpballs}, which considers
the group of rotations $SO(n)$ applied to 
uniform measures on $\ell_p^n$-balls.
The result is a partial law of the iterated
logarithm (LIL), which is an almost sure upper bound
of the form
$$
  \limsup_{n\rightarrow\infty}
  \frac{
    \abs*{\sum_{i=1}^{n} X_i}
  }{Kn^{1/2-1/p}\sqrt{ \log\log n }}
  \le 1, ~~\text{a.s.}
$$
for some constant $K>0$
and $(X_1,\ldots,X_n)$ a uniformly random point
within $\ell_p^{n}$.  Of note, these
$X_i$ are not independent unlike the usual
LIL setting.  Showing the above 
equals $1$ almost surely is left to future work
as proving the lower bound typically relies
on the second Borel-Cantelli lemma. This, in 
turn, relies on independence of events, which 
is not the case in this problem.
Furthermore, the value of $K$ resulting from 
certain concentration inequalities on $SO(n)$
is suboptimal.
For independent random variables, there is a 
long history of classical LIL results that
are detailed in Chapter 5 of \cite{STOUT1974}
and Chapter 8 of \cite{LEDOUXTALAGRAND1991}.

The second application comes in 
Section~\ref{sec:specificTests}, which considers
classical one-sample and two-sample statistical 
hypothesis testing.  Nonparametric randomization
tests can be performed using the group of 
reflections and permutations, respectively,
but only when the data distribution is 
invariant to such group actions.  
Section~\ref{sec:specificTests} first applies 
Theorem~\ref{thm:asympInvariance} to these
testing problem to show when, for large $n$,
the distribution of the test statistic is nearly
invariant to the corresponding group actions.
It secondly proves some quantitative versions
of Theorem~\ref{thm:asympInvariance} in these
specific settings; see Theorems~\ref{thm:oneSample}
and~\ref{thm:twoSample}.  These are in the style
of Berry-Esseen bounds for the central limit theorem
\citep{FELLER2008B}.

Randomization testing in statistics arguably
dates back to Sir Ronald Fisher himself
with the formulation of the permutation test, 
but it has gained more popularity 
with the advent of modern computers
\citep{PESARIN2010,GOOD2013}.
There has been much modern work on the usage
of random permutations for hypothesis testing
\citep{HEMERIK2018,HEMERIK2018false,HEMERIK2019,KASHLAK_YUAN_ABELECT,KASHLAK_KHINTCHINE2020,KIM2022,VESELY2023}.  There has also been some very recent work on 
subgroup and subset selection for more efficient 
permutation testing 
\citep{BARBER2022,KONING2023,KONING_HEMERIK_2023}.
Beyond permutations, a few works have considered
hypothesis testing with random rotations
\citep{SOLARI2014}, the wild bootstrap effectively
uses random reflections to construct confidence
regions \citep{KASHLAK_BURAK_WILDBS}, and
other recent works consider 
general group symmetry \citep{CHIU2023}.

To correctly apply a two-sample t-test for testing
for equality of population means, the populations 
should be Gaussian distributed with homogeneous 
variances.  This ensures that the classic student's
t-test is exact.
That is, the desired type-I error rate is achieved
for all finite sample sizes.
However in practice, the t-test is applied to 
a vast number of scenarios without producing erroneous 
results.  This is in part thanks to the central limit
theorem and asymptotic normality.  
If the sample size is large enough, the 
sample means as they appear in the t-statistic will
be close enough to Gaussian to ensure some trust
in the final result.  The notion of `close' to 
Gaussian can be made more precise through results
like the aforementioned 
Berry-Esseen bounds.

In contrast, to correctly apply a 
two-sample permutation test for testing for 
equality of population means, the joint distribution
of the sample should be exchangeable, i.e.~invariant
to permutations, under the null hypothesis.  This
is a slightly weaker condition than assuming an 
iid sample, and thus reasonable in many practical 
settings.  However, as an example, 
heterogeneous population variances will violate 
the exchangeability assumption.  A motivating goal 
of this work is to determine if the permutation
test is robust to slight deviations from exchangeability
much like the t-test's robustness to slight deviations
from Gaussianity.

\section{Asymptotic Invariance}
\label{sec:RandTest}

Let $(\Omega,\mathcal{F},P)$ be a probability space,
$(H,\mathcal{H})$ be an Hilbert space
space equipped with $\mathcal{H}$, the
Borel $\sigma$-field generated
by the norm topology,
and let
$(\real,\mathcal{B})$ denote the real line with the 
standard Borel $\sigma$-field.  
Let $X:\Omega\rightarrow H$
be an $H$-valued random variable, and let $T:H\rightarrow\real$
be a measurable function.
Let $G$ be a compact topological group,
which implies that the multiplication, $(g,h)\rightarrow gh$, 
and inversion, $g\rightarrow g^{-1}$, operations
are continuous.
The group $G$ comes equipped with normalized 
Haar measure $\rho$ being the unique left-invariant 
measure on the 
Borel sets derived from the topology on $G$;
that is, $\rho(E) = \rho(gE)$ for any $g\in G$
and any measurable 
subset $E$ of $G$.  Existence and uniqueness of $\rho$
is discussed in chapter 2 of \cite{HOFMANNMORRIS}
among other sources.
Elements of $G$ are said to act on $H$,
i.e.~for $\mathcal{L}(H)$, the space of $H$-endomorphisms
with the strong operator topology, 
there exists a 
representation $\pi:G\rightarrow\mathcal{L}(H)$
with $\pi(g) = \pi_g$ such that  $\pi_g:H\rightarrow H$
and $\pi(gh)=\pi(g)\pi(h)$ for any $g,h\in G$.
The Hilbert space $H$ is called an Hilbert $G$-Module;
see ``Weyl’s Trick'' in Theorem~2.10 of \cite{HOFMANNMORRIS}
for the existence of an inner product that makes all
$\pi_g$ unitary.
A set $S\in\mathcal{H}$ is said to be $G$-invariant if
$S = \pi_gS$ for all $g\in G$.
Then, it can be readily checked that
the collection $\mathcal{S}$ of $G$-invariant sets
forms a $\sigma$-field and $\mathcal{S}\subseteq\mathcal{H}$.

Two conditions on the above will be considered in this work:
\begin{itemize}
  \item[\bf C1] The measure on $H$ induced by $X(\omega)$ is
  $G$-invariant, i.e.,
  for all $g\in G$ and $A\in\mathcal{H}$,
  $\prob{ X\in A } = \prob{\pi_gX\in A}$
  where $\prob{ X\in A } := 
  P(\{\omega\in\Omega\,:\,X(\omega)\in A\})$.
  \item[\bf C2] The measure on $\real$ 
  induced by the mapping $T(X(\omega))$ is 
  $G$-invariant, i.e.,
  for all $g\in G$ and $B\in\mathcal{B}$,
  $\prob{ T(X)\in B } = \prob{T(\pi_gX)\in B}$
  where $\prob{ T(X)\in B } := 
  P(\{\omega\in\Omega\,:\,T(X(\omega))\in B\})$.
\end{itemize}
Condition C2 implies that 
$\prob{ X\in T^{-1}(B) } = \prob{ X \in \pi_gT^{-1}(B)}$ 
for all $g\in G$
or, that is, that condition C1 holds restricted to 
$\mathcal{T}=\sigma(T)$
rather than on all of $\mathcal{H}$ where 
$\sigma(T)$ is the smallest $\sigma$-field on $H$
such that $T$ is measurable. 

\begin{example}
  As a simple example, let $T:\real^2\rightarrow\real$
  be $T( (x,y) ) = x-y$ and $G = \mathbb{S}_2$, 
  the symmetric group on two elements.  
  Then, $B\in\mathcal{S}$ if whenever $(x_0,y_0)\in B$, 
  then $(y_0,x_0)\in B$; i.e. reflections across the diagonal.
  And $A\in\mathcal{T}$ if whenever $(x_0,y_0)\in A$,
  then $(x_0+r,y_0+r)\in A$ for any $r\in\real$; i.e.
  diagonal strips.  This is displayed in 
  Figure~\ref{fig:egST}.
\end{example}

\begin{figure}
  \centering
  \includegraphics[width=0.31\textwidth]{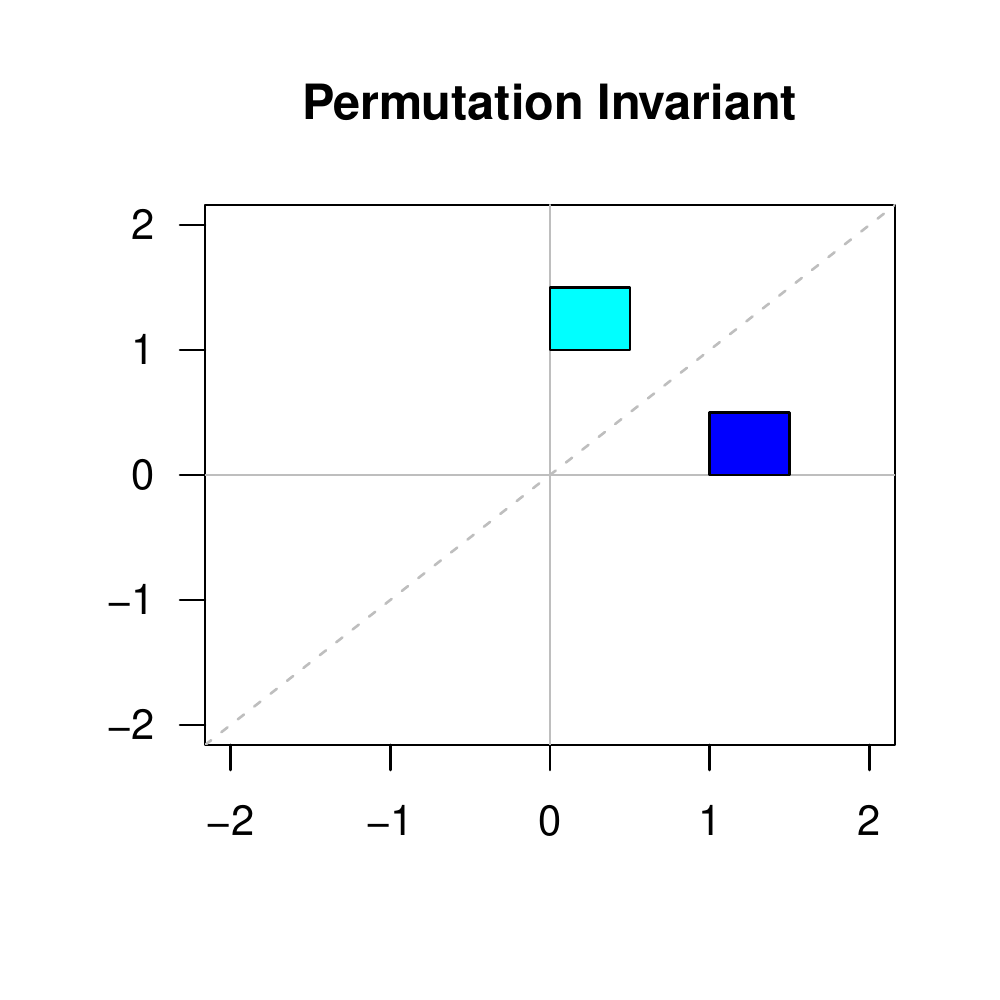}
  \includegraphics[width=0.31\textwidth]{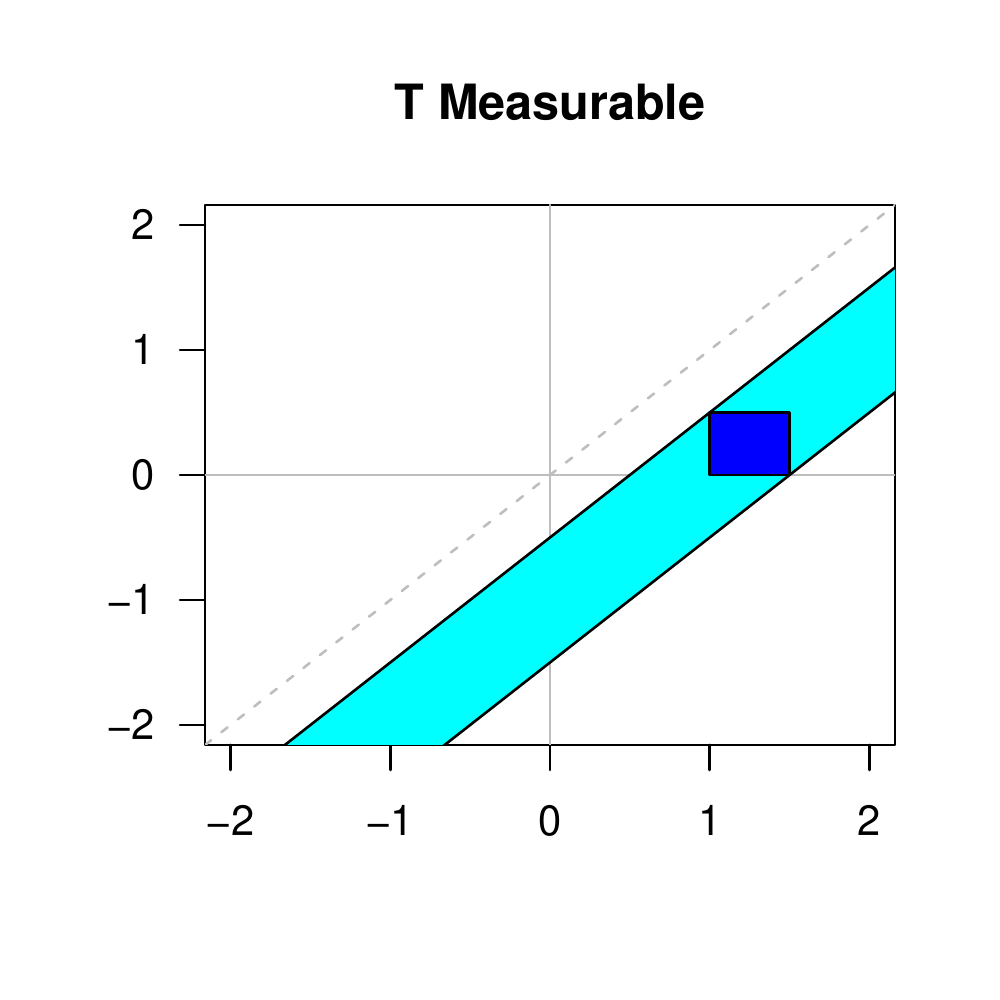}
  \includegraphics[width=0.31\textwidth]{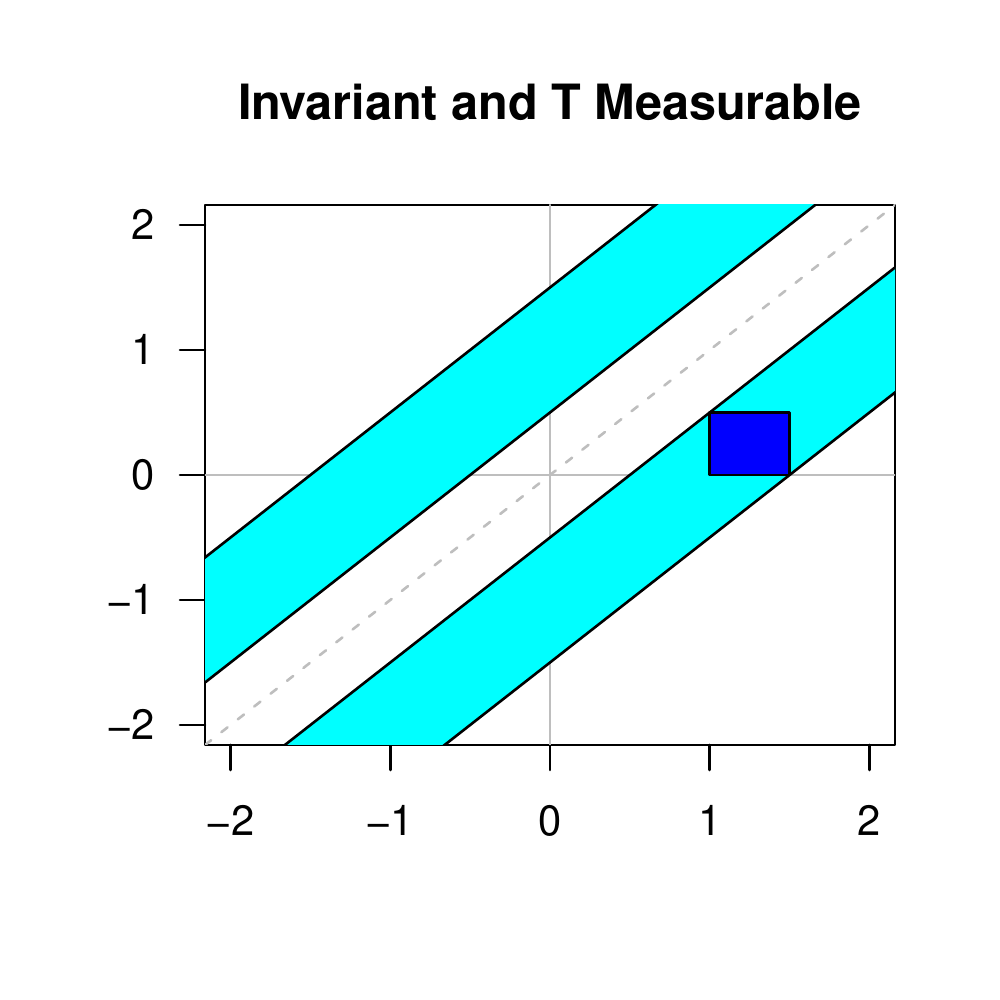}
  \caption{
    \label{fig:egST}
    Starting with the blue square, the cyan set includes
    all the necessary points to make the set $\mathbb{S}_2$
    invariant, $\mathcal{T}$-measurable, or in
    $\mathcal{S}\cap\mathcal{T}$.
  }
\end{figure}

It is clear that condition C1 implies condition C2.  
Furthermore, C1 has been 
studied in \cite{LEHMANN2006} and others referred to as 
the \textit{Radomization Hypothesis} or
\textit{Total Radomization Hypothesis}. However, this
is typically unnecessarily strong for hypothesis testing.
An example of this is the two sample t-test discussed below
in Section~\ref{sec:TwoTest} where C2 is achieved either when
the two populations have homogeneous variances or when the 
sample sizes coincide.  C2 is also achieved if 
$T$ is a $G$-invariant-mapping, i.e.~$T(x)=T(\pi_gx)$
for all $g\in G$ and $x\in H$.  However, in this case, 
the utility of $T$ is lost as the invariance
or lack thereof with respect to specific probability measures
is of primary interest.
For a simple example that is explored 
more in Section~\ref{sec:specificTests}, 
the sample mean is invariant to 
permutations, but the difference between two 
sample means may not be invariant; i.e. 
addition is commutative, and subtraction is not.

Given a fixed $x\in H$ and some $\alpha\in(0,1)$, 
the randomization 
threshold $t_\alpha(x)\in\real$ can be defined as
$$
  t_\alpha(x) = \inf\left\{
  {t\in\real} \,:\,
  \rho(\{
    g\in G\,:\, T(\pi_gx)>t
  \})\le\alpha
  \right\}
$$
Therefore,
$
\rho(\{
    g\in G\,:\, T(\pi_gX)>t_\alpha(X)
  \})\le\alpha
$
$P$-almost surely.  This somewhat simple fact is critical for
the following Corollaries~\ref{cor:condProb}
and~\ref{cor:asympSize}.

The following is a slightly modified version of Theorem
15.2.2 from \cite{LEHMANN2006} with some similar 
results appearing in \cite{HEMERIK2018}.

\begin{theorem}
  \label{thm:condProb}
  Under condition C2, 
  $$
    \prob{
      T(X) \in B \,\mid\, \mathcal{S}\cap\mathcal{T}
    } = 
    \rho\left(\{
      g\in G \,:\,
      T(\pi_gX) \in B
    \}\right)
  $$
  $P$-almost surely 
  for any $B\in\mathcal{B}$.
\end{theorem}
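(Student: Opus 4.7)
The natural candidate for the conditional probability is
$$R(x) := \rho\!\left(\{g \in G : T(\pi_g x) \in B\}\right),$$
and the plan is to verify that $R(X)$ is ($P$-a.s.\ equal to) the conditional probability on the left. Two ingredients are needed: the integral identity
$$\mathbb{E}\!\left[\mathbbm{1}\{T(X)\in B\}\,\mathbbm{1}_A(X)\right] \;=\; \mathbb{E}\!\left[R(X)\,\mathbbm{1}_A(X)\right] \qquad \text{for every } A \in \mathcal{S}\cap\mathcal{T},$$
together with $\mathcal{S}\cap\mathcal{T}$-measurability of $R(X)$.

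The integral identity is the heart of the argument and is where condition C2 is consumed. Given $A\in\mathcal{S}\cap\mathcal{T}$, first write $A = T^{-1}(C)$ with $C\in\mathcal{B}$ using $A\in\mathcal{T}$, and then invoke the $G$-invariance of $A$ (from $A\in\mathcal{S}$) to get the pointwise identity $\mathbbm{1}_C(T(x))=\mathbbm{1}_C(T(\pi_g x))$ for every $x\in H$ and every $g\in G$. A Fubini swap on $G\times\Omega$ then yields
$$\mathbb{E}\!\left[R(X)\,\mathbbm{1}_A(X)\right] \;=\; \int_G \mathbb{E}\!\left[\mathbbm{1}\{T(\pi_g X)\in B\cap C\}\right]\, d\rho(g),$$
and C2 makes the integrand constant in $g$ and equal to $\prob{T(X)\in B\cap C} = \mathbb{E}[\mathbbm{1}\{T(X)\in B\}\mathbbm{1}_A(X)]$, which closes the identity after normalization of $\rho$.

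For measurability, the change of variable $g\mapsto g h^{-1}$ combined with bi-invariance of Haar measure on the compact group $G$ gives $R(\pi_h x) = R(x)$ for every $h\in G$, so $R$ is constant on $G$-orbits and hence $\mathcal{S}$-measurable on $H$. The delicate point, and what is anticipated to be the main obstacle, is upgrading $\mathcal{S}$-measurability to $\mathcal{S}\cap\mathcal{T}$-measurability of $R(X)$: $R$ need not literally factor through $T$. The plan is to handle this by combining the integral identity with the $\mathcal{S}$-invariance of $R(X)$ through a monotone class argument on $A$, forcing $R(X)$ to coincide $P$-a.s.\ with the (essentially unique) $\mathcal{S}\cap\mathcal{T}$-measurable version of the conditional expectation on the left. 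This step also pinpoints the role of the weakening from C1 to C2 relative to \cite{LEHMANN2006}: under C1, the Fubini swap works against any $A\in\mathcal{S}$ and one conditions on the full orbit $\sigma$-field, whereas C2 only permits test sets $A$ that are also $\mathcal{T}$-measurable, forcing the conditioning $\sigma$-field to contract to $\mathcal{S}\cap\mathcal{T}$.
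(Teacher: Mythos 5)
Your core argument is exactly the paper's: write $A=T^{-1}(C)$, use $G$-invariance of $A$ to trade $\mathbbm{1}_A(x)$ for $\mathbbm{1}_A(\pi_g x)=\mathbbm{1}_C(T(\pi_g x))$, swap the order of integration over $G\times\Omega$, and let C2 make the integrand $\prob{T(\pi_gX)\in B\cap C}$ constant in $g$. That part is correct and is verbatim the published proof.

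Where you go beyond the paper is in flagging that the integral identity alone does not finish the job: one also needs $R(X)=\rho(\{g:T(\pi_gX)\in B\})$ to be $\mathcal{S}\cap\mathcal{T}$-measurable (the paper simply says ``by uniqueness'' and does not address this). Your instinct is right that this is the delicate point, but your proposed repair does not close it. A monotone class argument on the test sets $A$ gains nothing, because $\mathcal{S}\cap\mathcal{T}$ is already a $\sigma$-field and the identity $\mathbb{E}[R(X)\mathbbm{1}_A(X)]=\mathbb{E}[\mathbbm{1}\{T(X)\in B\}\mathbbm{1}_A(X)]$ for \emph{all} $A\in\mathcal{S}\cap\mathcal{T}$ only pins down $\mathbb{E}[R(X)\mid\mathcal{S}\cap\mathcal{T}]$; it cannot force $R(X)$ itself to equal that conditional expectation unless $R(X)$ is already (a.s.\ equal to) an $\mathcal{S}\cap\mathcal{T}$-measurable variable. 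Your own observation that $R$ is constant on $G$-orbits (hence $\mathcal{S}$-measurable) but need not factor through $T$ is precisely the obstruction, and no amount of enlarging the class of test sets removes it. To actually finish you must either verify $\mathcal{T}$-measurability of $R(X)$ up to $P$-null sets in the situation at hand (it holds, e.g., in the paper's $\mathbb{S}_2$ example, where $R$ is a function of $T(x)$ alone), or prove the statement with the conditioning $\sigma$-field enlarged to one containing $\sigma(R(X))$ — for instance $\mathcal{S}$ itself, which is the form of the classical result in \cite{LEHMANN2006}. As written, your plan (and, to be fair, the paper's own proof) leaves this step open.
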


\begin{proof}
  For any $G$-invariant set $S\in\mathcal{S}\cap\mathcal{T}$,
  there exists an $A\in\mathcal{B}$ such that $S = T^{-1}(A)$.
  Thus, denoting $\xv_X[\cdot]$ the expected value with respect to
  $X$,
  \begin{align*}
    \xv_X&\left[
      \rho\left(\{
      g\in G \,:\,
      T(\pi_gX) \in B
    \}\right)\indc{S}
    \right]\\
    &= 
    \int_S
    \int_{G}
    \Indc{
      T(\pi_gx)\in B
    }
    d\rho(g)
    dP(x)\\
    &= 
    \int_{G}
    \int_S
    \Indc{
      T(\pi_gx)\in B
    }
    dP(x)
    d\rho(g)\\
    &= 
    \int_{G}
    \prob{
      T(\pi_gX)\in B,
      X\in S
    }
    d\rho(g)\\
    &= 
    \int_{G}
    \prob{
      T(\pi_gX)\in B,
      \pi_gX\in S
    }
    d\rho(g)\\
    &= 
    \int_{G}
    \prob{
      \pi_gX\in T^{-1}(B)\cap T^{-1}(A)
    }
    d\rho(g)\\
    &= 
    \int_{G}
    \prob{
      X\in T^{-1}(B\cap A)
    }
    d\rho(g)= \prob{
      T(X)\in B,
      X\in S
    }.
  \end{align*}
  Thus, by uniqueness, the conditional probability
  $\prob{
      T(X) \in B \,\mid\, \mathcal{S}\cap\mathcal{T}
    }$
    coincides with the random measure
    $
    B \rightarrow \rho\left(\{
      g\in G \,:\,
      T(\pi_gX) \in B
    \}\right)
    $.
\end{proof}

\begin{corollary}
  \label{cor:condProb}
  Under condition C2, 
  $
    \prob{
      T(X) > t_\alpha(X)
    } \le \alpha.
  $
\end{corollary}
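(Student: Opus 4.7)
The plan is to apply Theorem~\ref{thm:condProb} with the (random) set $B=(t_\alpha(X),\infty)$ and then take expectations. By the very definition of $t_\alpha$, together with the right-continuity of the map $t\mapsto\rho(\{g\in G:T(\pi_gx)>t\})$, the inequality
$$
  \rho\bigl(\{g\in G : T(\pi_gX)>t_\alpha(X)\}\bigr) \le \alpha
$$
holds $P$-almost surely. If I can identify the left-hand side with the conditional probability $\prob{T(X)>t_\alpha(X)\mid\mathcal{S}\cap\mathcal{T}}$, the corollary follows immediately from the tower property:
$$
  \prob{T(X)>t_\alpha(X)} = \xv\bigl[\prob{T(X)>t_\alpha(X)\mid\mathcal{S}\cap\mathcal{T}}\bigr] \le \alpha.
$$

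The only real subtlety is that Theorem~\ref{thm:condProb} is stated for a fixed Borel set $B$, whereas here $B$ depends on $X$ through $t_\alpha(X)$. To justify the substitution, I would first argue that $t_\alpha(X)$ coincides $P$-almost surely with a $(\mathcal{S}\cap\mathcal{T})$-measurable random variable, so that conditionally on $\mathcal{S}\cap\mathcal{T}$ it may be treated as a constant. Two observations combine to give this: first, bi-invariance of Haar measure on the compact group $G$ gives $t_\alpha(\pi_hx)=t_\alpha(x)$ for every $h\in G$, whence $t_\alpha$ is $\mathcal{S}$-measurable as a function on $H$; second, Theorem~\ref{thm:condProb} asserts that for each fixed $t\in\real$ the quantity $F_X(t):=\rho(\{g:T(\pi_gX)>t\})$ is $P$-a.s.~equal to the (necessarily $(\mathcal{S}\cap\mathcal{T})$-measurable) conditional probability $\prob{T(X)>t\mid\mathcal{S}\cap\mathcal{T}}$. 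Using a regular version of the conditional distribution of $T(X)$ given $\mathcal{S}\cap\mathcal{T}$ (which exists since $\real$ is Polish), one obtains an $(\mathcal{S}\cap\mathcal{T})$-measurable version of $F_X(\cdot)$ on a rational dense grid in $t$, and hence an $(\mathcal{S}\cap\mathcal{T})$-measurable version of $t_\alpha(X)=\inf\{t:F_X(t)\le\alpha\}$.

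With $t_\alpha(X)$ treated as $(\mathcal{S}\cap\mathcal{T})$-measurable, the standard substitution principle for regular conditional probabilities yields
$$
  \prob{T(X)>t_\alpha(X)\mid\mathcal{S}\cap\mathcal{T}} \,=\, F_X(t_\alpha(X)) \,\le\, \alpha \quad P\text{-a.s.},
$$
and integrating over $\Omega$ completes the argument. The main obstacle I anticipate is the measurability/substitution step; everything else is essentially a repackaging of Theorem~\ref{thm:condProb} together with the definition of $t_\alpha$.
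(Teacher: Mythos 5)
Your proof is correct and follows essentially the same route as the paper's: condition on $\mathcal{S}\cap\mathcal{T}$, invoke Theorem~\ref{thm:condProb} to replace the conditional probability by the Haar-measure quantity, and use the almost-sure bound $\rho(\{g\in G: T(\pi_gX)>t_\alpha(X)\})\le\alpha$ together with the tower property. The only difference is that you explicitly justify substituting the random threshold $t_\alpha(X)$ into Theorem~\ref{thm:condProb} (via bi-invariance of $\rho$, a rational grid, and a regular conditional distribution), a measurability point the paper's two-line proof passes over silently; this is a refinement, not a different argument.
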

\begin{proof}
  Let $R = \{x\in H\,:\, T(x)>t_\alpha(x)\}$.
  Then, $R\in\mathcal{H}$ and from Theorem~\ref{thm:condProb}
  \begin{align*}
    \prob{ T(X) > t_\alpha(X) } 
    &=
    \xv\left[
      \prob{
        T(X) > t_\alpha(X) \,\mid\, \mathcal{S}\cap\mathcal{T}
      }
    \right]\\
    &= 
    \xv\left[
      \rho\left(\{
      g\in G \,:\,
      T(\pi_gX) > t_\alpha(X)
    \}\right)
    \right] \le \alpha
  \end{align*}
  as almost sure equality implies equality in mean.
\end{proof}

The validity of the above result hinges on condition 
C2.  Upon removal of that condition, 
almost sure equality is lost, but can still be achieved 
asymptotically.  In what follows, let $H = \real^\infty$.
For all $n\in\natural$, let 
$\mathcal{S}_n$ be the $\sigma$-field of 
$G_n$-invariant sets
in $\real^n$. 
Furthermore, let $T_n:\real^n\rightarrow\real$ be 
$c_n$-Lipschitz and 
$\mathcal{T}_n$ be the smallest $\sigma$-field
on $\real^n$ such that $T_n$ is measurable.
The choice of the sequence Lipschitz constants
has intriguing implications that are discussed
post-theorem in Section~\ref{sec:lipSelection}.

Functions $T_n$, groups $G_n$, and sets $S_n$
can be extended to $\real^\infty$.
A set $S_n\in\mathcal{S}_n$ can be written as 
$\{S_n\otimes\real\otimes\ldots\}\subset \real^\infty$
which is invariant to elements of $G_n$ acting on the
first $n$ coordinates and fixing the rest.
Let $G := \bigcup_{n\ge1}G_n$ which 
consists of all group actions from $G_n$ that
only modify the first $n$ entries of 
$x\in\real^\infty$ for all $n\in\natural$; e.g. $G$
may consist of all permutations that only permute a 
finite number of elements as arises in the Hewitt-Savage
zero-one law \citep{RAO1974,DUDLEY2002}.
Tychnoff's theorem ensures 
compactness is maintained in the limit with respect
to the product topology; of note, arbitrary products 
of compact groups are compact as are subgroups of 
such \citep[Proposition 1.14]{HOFMANNMORRIS}.
A tail set $E\subset\real^\infty$, 
  as defined by \cite{OXTOBY2013}, 
  is such that if $x\in E$ and if $y$ differs from $x$
  in only a finite number of coordinates, then $y\in E$.
  Since any $g\in G$ necessarily modifies only a finite
  number of coordinates, $x\in E$ implies $\pi_gx\in E$
  and thus tail sets are $G$-invariant.
Furthermore, $\mathcal{S}$ is the $\sigma$-field 
on $\real^\infty$ of $G_n$-invariant sets for 
all $n\in\natural$, and $T:\real^\infty\rightarrow\real$
is defined as $T:=\lim_{n\rightarrow\infty}T_n$ where
$T_n$ can be defined on $\real^\infty$  
by projecting $x\in\real^\infty$ onto the first $n$
coordinates.  A simple example is the sample mean
$T_n(x) = n^{-1}\sum_{i=1}^nx_i$, which will be 
discussed in Section~\ref{sec:specificTests}.

In what follows, the notion of a \textit{L\'evy family}
is required \citep{GROMOVMILMAN1983,LEDOUX2001}.
Let $(M^{(n)},d^{(n)},\mu^{(n)})$ be a family of 
metric measure spaces for $n\ge1$. 
The open neighbourhood of a set $A\subset M^{(n)}$ 
for some $t>0$ is 
$A_t = \{ x\in M^{(n)} \,:\, d^{(n)}(x,A)<t \}$.
This collection of metric measure spaces
is said to be a \textit{normal L\'evy family} if
$$
  \sup_{A\subset M^{(n)}}\left\{
    1 - \mu^{(n)}(A_t) \,:\,
    \mu^{(n)}(A) \ge 1/2
  \right\}
  \le 
  K\ee^{-knt^2}
$$
for some constants $K,k>0$.  From the previous paragraph, 
$M^{(n)} = G_n$ treated as a subgroup of $G$ that acts as
the identity on all coordinates $i>n$.  The measure
$\mu^{(n)}=\rho_n$ will be Haar measure for $G_n$.  The 
main results below require the family $(G_n,d_n,\rho_n)$
be a normal L\'evy family.  This, of course, covers a 
wide variety of groups.  Most notably, the 
classical compact groups ${SO}(n)$, 
${SU}(n)$, and ${S}{p}(2n)$ 
with the Hilbert-Schmidt metric 
satisfy this requirement \citep[Chapter 5]{MECKES2019}.
Furthermore, any sequence of topological 
groups corresponding to compact connected smooth
Riemannian manifolds with geodesic distance
and strictly positive Ricci curvature
embedded in $\real^n$
\citep{GROMOV1999,LEDOUX2001,MILMANSCHECHTMAN2009}.  
For discrete groups,
the reflection group (Section~\ref{sec:OneTest})
with the Hamming metric
is normal L\'evy following from Hoeffding's 
inequality.  The symmetric group (Section~\ref{sec:TwoTest})
and many other compact groups
are also normal L\'evy; see Corollary 4.3 and 
Theorem 4.4 in \cite{LEDOUX2001}.

The following theorem shows that 
Theorem~\ref{thm:condProb} can hold in an asymptotic 
sense.  It is proven via the three subsequent lemmas 
below.  Lastly, an asymptotic analogue of Corollary~\ref{cor:condProb}
is stated and proved below in Corollary~\ref{cor:asympSize}.

\begin{theorem}
  \label{thm:asympInvariance}
  Let $X\in\real^\infty$ and $X^{(n)}\in\real^n$ be $X$
  projected onto its first $n$ coordinates.
  Let $T_n$ be $c_n$-Lipschitz such that for some $p\ge1$,
  $\xv\norm{X^{(n)}}^p<\infty$ for all $n$ and 
  $\sum_{n=1}^\infty c_n^p<\infty$.
  Furthermore, let $n^{-1/2}c_n\norm{X^{(n)}}\convas0$.
  Lastly, let the collection of $G_n$ be a normal
  L\'evy family.
  Then,
  $$
  \abs*{
    \rho\left(\{
      g\in G_n \,:\,
      T_n(\pi_gX^{(n)}) > t
    \}\right)
    - \prob{
      T_n(X^{(n)}) > t  \,|\,
      \mathcal{S}_n\cap\mathcal{T}_n
    }
    }
    \rightarrow 0
  $$
  $P$-almost surely and in $L^1$
  as $n\rightarrow\infty$.
\end{theorem}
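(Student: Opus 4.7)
The plan is to leverage the normal L\'evy family hypothesis on $G_n$ to show that the function $g\mapsto T_n(\pi_g X^{(n)})$ on $G_n$ is essentially constant, concentrated on its Haar mean $M_n(X^{(n)})$, for $P$-almost every $\omega$. Theorem~\ref{thm:condProb} applied on the randomized measure then identifies the common asymptotic value of $A_n := \rho(\{g \in G_n : T_n(\pi_g X^{(n)}) > t\})$ and $B_n := \prob{T_n(X^{(n)}) > t \mid \mathcal{S}_n \cap \mathcal{T}_n}$.

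First I would establish a Lipschitz bound for $F_n(g) := T_n(\pi_g X^{(n)})$ as a function on the metric group $(G_n, d_n)$. Since $T_n$ is $c_n$-Lipschitz on $H$ and each $\pi_g$ is unitary by Weyl's trick, one has $|F_n(g) - F_n(h)| \leq c_n \norm{\pi_g X^{(n)} - \pi_h X^{(n)}} \leq c_n \norm{X^{(n)}} d_n(g,h)$ for the operator / Hilbert--Schmidt metric $d_n$. Combined with the normal L\'evy family property, this yields the Gaussian concentration
$$
\rho\!\left( \{g \in G_n : |F_n(g) - M_n(X^{(n)})| > \varepsilon\} \right) \;\leq\; K \exp\!\left( -\frac{k n \varepsilon^2}{c_n^2 \norm{X^{(n)}}^2} \right)
$$
for every $\varepsilon > 0$, where $M_n(x) := \int_{G_n} T_n(\pi_g x)\, d\rho(g)$ is $\mathcal{S}_n$-measurable by bi-invariance of $\rho$. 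Because $n^{-1/2} c_n \norm{X^{(n)}} \convas 0$, a choice like $\varepsilon_n = \sqrt{\log n}\, c_n \norm{X^{(n)}} / \sqrt{n}$ sends $\varepsilon_n \to 0$ while keeping the right-hand side summable; Borel--Cantelli, supported by the moment hypothesis $\xv \norm{X^{(n)}}^p < \infty$ and $\sum_n c_n^p < \infty$, then yields $\abs{A_n - \Indc{M_n(X^{(n)}) > t}} \to 0$ $P$-almost surely.

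Next I would match $B_n$ to the same limit via the randomized measure $\tilde P_n := \int_{G_n} (\pi_g)_* P \, d\rho(g)$. By construction $\tilde P_n$ satisfies condition C2, so Theorem~\ref{thm:condProb} applied under $\tilde P_n$ gives $A_n = \tilde P_n\bigl( T_n(X^{(n)}) > t \mid \mathcal{S}_n \cap \mathcal{T}_n \bigr)$ $\tilde P_n$-almost surely, and hence $P$-almost surely on the $\mathcal{S}_n$-measurable level of $A_n$. Since $\tilde P_n$ and $P$ coincide on every $\mathcal{S}_n$-measurable event, for each $E \in \mathcal{S}_n \cap \mathcal{T}_n$ the discrepancy $\int_E (A_n - B_n)\, dP = (\tilde P_n - P)[\{T_n > t\} \cap E]$ unwinds via Fubini and the $G$-invariance of $E$ to $\int_{G_n} \xv_P\bigl[ (\Indc{T_n(\pi_g X^{(n)}) > t} - \Indc{T_n(X^{(n)}) > t})\, \Indc{X^{(n)} \in E} \bigr]\, d\rho(g)$. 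The L\'evy estimate controls each integrand in $L^p$ by a quantity of order $c_n \norm{X^{(n)}} / \sqrt{n}$, and summability $\sum_n c_n^p < \infty$ lets Borel--Cantelli promote the resulting $L^1$ bound $\xv \abs{A_n - B_n} \to 0$ to a $P$-a.s.\ statement.

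The principal obstacle is this last step: converting the pointwise-in-$\omega$ L\'evy concentration of $F_n$, which gives $T_n(\pi_g X^{(n)}) \approx M_n(X^{(n)})$ only for $\rho$-typical $g$ and says nothing direct about $T_n(X^{(n)}) = T_n(\pi_e X^{(n)})$, into a uniform $L^1$ bound on the discrepancy between the $\tilde P_n$- and $P$-conditional probabilities. Bridging that gap requires an $L^p$ version of the concentration bound that integrates the fixed-$X^{(n)}$ estimate against $P$, which is precisely where the moment hypothesis $\xv \norm{X^{(n)}}^p < \infty$ and the summability of $c_n^p$ must come into play. The three lemmas the author references after the theorem presumably establish, in sequence, (i) the Lipschitz transfer from $H$ to $G_n$, (ii) the L\'evy concentration of $F_n$ around $M_n$, and (iii) this final moment-weighted comparison between $\tilde P_n$ and $P$.
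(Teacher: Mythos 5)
There is a genuine gap, and it sits exactly where you yourself locate ``the principal obstacle.'' Your plan is to compare $A_n$ and $B_n$ directly at each finite $n$ by introducing the randomized measure $\tilde P_n$ and reducing the discrepancy on each $E\in\mathcal{S}_n\cap\mathcal{T}_n$ to $\int_{G_n}\xv\bigl[(\Indc{T_n(\pi_gX^{(n)})>t}-\Indc{T_n(X^{(n)})>t})\Indc{X^{(n)}\in E}\bigr]\,d\rho(g)$. The assertion that ``the L\'evy estimate controls each integrand in $L^p$ by a quantity of order $c_n\norm{X^{(n)}}/\sqrt{n}$'' is the step that fails. The Lipschitz and L\'evy machinery gives $\abs{T_n(\pi_gX^{(n)})-T_n(X^{(n)})}\le 2c_n\norm{X^{(n)}}$ and concentration of $g\mapsto T_n(\pi_gX^{(n)})$ about its Haar mean, but a small difference in the \emph{values} of $T_n$ does not bound the difference of the \emph{indicators}: the two indicators disagree precisely when the values straddle the level $t$, and the $P$-probability of straddling can be of order one if the law of $T_n(X^{(n)})$ concentrates near $t$. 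Closing your argument would require an anti-concentration (or continuity of the limiting distribution function at $t$) hypothesis that is neither assumed nor derivable from the stated conditions. Two secondary issues: passing from a bound on $\int_E(A_n-B_n)\,dP$ over $E$ in the conditioning field to a bound on $\xv\abs{A_n-B_n}$ needs $A_n$ to be $\mathcal{S}_n\cap\mathcal{T}_n$-measurable (it is $\mathcal{S}_n$-measurable, but absent C2 not obviously $\mathcal{T}_n$-measurable, so you cannot test against $E=\{A_n>B_n\}$); and Borel--Cantelli requires summability of $\xv\abs{A_n-B_n}$, not mere convergence to zero.

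The paper sidesteps the straddling problem entirely by never comparing the two indicators at finite $n$. It inserts the limit objects and splits the difference into three terms: (I) the Haar probability $A_n$ converges $P$-almost surely to the single indicator $\indc{T(X)>t}$, proved by your concentration estimate \emph{combined with} the asymptotic $G_n$-invariance of $T$ (Lemma~\ref{lem:asympInvarT}: for fixed $g\in G_n$, $\abs{T_{n+m}(\pi_gX)-T_{n+m}(X)}\le 2c_{n+m}\norm{X^{(n)}}\rightarrow0$, which is what identifies the Haar mean with $T(X)$ and is where $\sum_n c_n^p<\infty$ enters via Borel--Cantelli); (II) in the limit $T$ is exactly $G$-invariant, so Theorem~\ref{thm:condProb} applies verbatim to equate the limiting Haar probability with $\prob{T(X)>t\mid\mathcal{S}\cap\mathcal{T}}$; (III) the conditional probabilities $B_n$ converge to that same limit by L\'evy's upward theorem and conditional dominated convergence (Lemma~\ref{lem:martConverge}). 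For almost every $\omega$ with $T(X(\omega))\ne t$ the indicator comparison is then trivial, so no anti-concentration is needed. Your proposal is missing both the asymptotic-invariance lemma and the martingale-convergence step, and without them the finite-$n$ comparison you attempt cannot be closed.
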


\begin{proof}
  Decomposing the difference gives
  \begin{align*}
    &\abs*{
    \rho\left(\{
      g\in G_n \,:\,
      T_n(\pi_gX^{(n)}) > t
    \}\right)
    - \prob{
      T_n(X^{(n)}) > t  \,|\,
      \mathcal{S}_n\cap\mathcal{T}_n
    }
    }\\
    &~~~\le 
    \abs*{
    \rho\left(\{
      g\in G_n \,:\,
      T_n(\pi_gX^{(n)}) > t
    \}\right)
    - 
    \rho\left(\{
      g\in G \,:\,
      T(\pi_gX) > t
    \}\right)
    }&&\mathrm{(I)}\\
    &~~~~+ 
    \abs*{
    \rho\left(\{
      g\in G \,:\,
      T(\pi_gX) > t
    \}\right)
    -
    \prob{
      T(X) > t  \,|\,
      \mathcal{S}\cap\mathcal{T}
    }
    }&&\mathrm{(II)}\\
    &~~~~+ 
    \abs*{
    \prob{
      T_n(X^{(n)}) > t  \,|\,
      \mathcal{S}_n\cap\mathcal{T}_n
    }
    -
    \prob{
      T(X) > t  \,|\,
      \mathcal{S}\cap\mathcal{T}
    }
    }&&\mathrm{(III)}
  \end{align*}
  
  The three pieces will be dealt with by the subsequent lemmas.
  Part (I) is handled
  by concentration of measure for compact topological 
  groups in Lemma~\ref{lem:groupAverageConvergence}.
  Part (II) follows from Lemma~\ref{lem:asympInvarT},
  which implies
  condition C2 in the limiting case. 
  Thus, by Theorem~\ref{thm:condProb} 
  $\rho\left(\{
      g\in G \,:\,
      T(\pi_gX) > t
    \}\right)
    =
    \prob{
      T(X) > t  \,|\,
      \mathcal{S}\cap\mathcal{T}
    }$
  $P$-almost surely and thus equal in mean as well. 
  Lastly, part (III) is handled by martingale 
  convergence in Lemma~\ref{lem:martConverge}.
\end{proof}

\begin{lemma}
  \label{lem:asympInvarT}
  For $X\in\real^\infty$, 
  let $X^{(n)}$ be $X$ projected 
  onto $\real^n$ by taking the first $n$ coordinates.
  Let $T_n$ be $c_n$-Lipschitz with $c_n\rightarrow0$.
  Assuming $\xv\norm{X^{(n)}}<\infty$ for each $n\in\natural$,
  the function $T_{n+m}(X)$ is asymptotically 
  $G_n$-invariant in mean
  for any fixed $n\in\natural$, 
  i.e. 
  $\xv\abs{T_{n+m}(\pi_gX)-T_{n+m}(X)}\rightarrow0$ for any
   fixed $g\in G_n$ and $n\in\natural$ as $m\rightarrow\infty$.
  Furthermore, if for some $p\ge1$,
  $\xv\norm{X^{(n)}}^p<\infty$ and 
  $\sum_{n=1}^\infty c_n^p<\infty$
  then $T_{n+m}(X)$ is asymptotically $G_n$-invariant  
  $P$-almost surely.
\end{lemma}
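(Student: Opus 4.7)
The plan rests on two structural facts: the $c_{n+m}$-Lipschitz property of $T_{n+m}$, and the observation that any $g\in G_n$ acts as the identity on coordinates $n+1,\ldots,n+m$ while acting unitarily on the first $n$ coordinates (by Weyl's trick, as invoked in Section~\ref{sec:RandTest}). Chaining these, I would start with the pointwise bound
$$
  \abs{T_{n+m}(\pi_gX) - T_{n+m}(X)}
  \le c_{n+m}\norm{\pi_gX^{(n+m)} - X^{(n+m)}}
  = c_{n+m}\norm{\pi_gX^{(n)} - X^{(n)}}
  \le 2\,c_{n+m}\norm{X^{(n)}},
$$
where the first inequality is Lipschitz continuity, the equality uses that $\pi_g$ fixes the last $m$ coordinates of $X^{(n+m)}$, and the last step uses the triangle inequality together with unitarity $\norm{\pi_gX^{(n)}} = \norm{X^{(n)}}$.

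For the convergence-in-mean claim, I would take expectations of both sides to obtain
$$
  \xv\abs{T_{n+m}(\pi_gX) - T_{n+m}(X)} \le 2\,c_{n+m}\,\xv\norm{X^{(n)}},
$$
which vanishes as $m\rightarrow\infty$ since $c_{n+m}\rightarrow 0$ by hypothesis while $\xv\norm{X^{(n)}}$ is a finite constant at the fixed index $n$.

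For the $P$-almost-sure claim, I would appeal to the first Borel-Cantelli lemma. For any $\veps>0$, Markov's inequality on the $p$-th power of the pointwise bound gives
$$
  \prob{\abs{T_{n+m}(\pi_gX) - T_{n+m}(X)} > \veps}
  \le 2^p\,\veps^{-p}\, c_{n+m}^p\, \xv\norm{X^{(n)}}^p.
$$
Summing over $m\in\natural$ and pulling the $n$-dependent constants out reduces the problem to finiteness of $\sum_{m=1}^\infty c_{n+m}^p$, which is merely the tail of the summable series $\sum_k c_k^p$. Hence by Borel-Cantelli $\abs{T_{n+m}(\pi_gX)-T_{n+m}(X)}\le\veps$ eventually, $P$-almost surely; taking $\veps$ along a countable null sequence and intersecting the corresponding full-measure events completes the argument.

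There is no deep analytic obstacle here and the work is essentially bookkeeping. The one place that merits care is the identification of how $g\in G_n$ acts on $\real^{n+m}$: because the compact group $G=\bigcup_n G_n$ is constructed so that $G_n$ fixes the tail coordinates, the equality $\norm{\pi_gX^{(n+m)} - X^{(n+m)}} = \norm{\pi_gX^{(n)} - X^{(n)}}$ and the simultaneous unitarity of every $\pi_g$ for $g\in G_n$ (via Weyl's trick) should both be flagged explicitly, since it is exactly these facts that render the right-hand side independent of $m$ up to the prefactor $c_{n+m}$, and thereby let the Lipschitz constants rather than the growing coordinate count drive the limit.
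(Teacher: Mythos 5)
Your proposal is correct and follows essentially the same route as the paper's own proof: the block-diagonal action of $g\in G_n$ on $\real^{n+m}$, the Lipschitz bound $\abs{T_{n+m}(\pi_gX)-T_{n+m}(X)}\le 2c_{n+m}\norm{X^{(n)}}$, expectation for the in-mean claim, and Markov plus Borel--Cantelli with the tail of $\sum_n c_n^p$ for the almost-sure claim. Your explicit flagging of unitarity and the triangle inequality, and the final intersection over a countable null sequence of $\veps$, merely spell out steps the paper leaves implicit.
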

\begin{proof}
  For a fixed $n$ and any $g\in G_n$ and any $m\in\natural$, 
  there exists a unitary representation 
  $\pi_g:\real^{n+m}\rightarrow\real^{n+m}$ that 
  fixes the final $m$ coordinates.  That is,
  $$
    \pi_g = \begin{pmatrix}
      \pi_g^{(n)} & 0\\
      0 & I^{(m)}
    \end{pmatrix}.
  $$
  Writing 
  $X^{(n+m)} = (X^{(n)},X^{(m)})$ and 
  $\pi_g = ( \pi_g^{(n)}, I^{(m)})$
  where $I^{(m)}$ is the identity mapping on $\real^m$
  results in 
  $\pi_g X^{(n+m)} = (\pi_g^{(n)}X^{(n)},X^{(m)})$.
  Thus, for any fixed $n$
  $$
    \abs{T_{n+m}(\pi_g X) - T_{n+m}(X)}
    \le c_{n+m} \norm{\pi_gX - X} 
    \le 2 c_{n+m} \norm{X^{(n)}},
  $$
  Taking the expectation and $m\rightarrow\infty$
  proves asymptotic invariance in mean.

  Secondly, by Markov's inequality, for any $t>0$
  \begin{align*}
    \prob{
      \abs{T_{n+m}(\pi_g X) - T_{n+m}(X)} > t
    }
    &\le 
    \prob{
      2 c_{n+m} \norm{X^{(n)}}> t
    }\\
    &\le 
    2^pc_{n+m}^p{\xv \norm{X^{(n)}}^p}t^{-p}.
  \end{align*}
  Hence, almost sure convergence follows from the 
  Borel-Cantelli lemma and the assumptions
  that $\sum_{n=1}^\infty c_n^p<\infty$.
\end{proof}


\begin{lemma}
  \label{lem:martConverge}
  The sequence of conditional probabilities 
  $
    \prob{ T(X^{(n)})>t \,|\, \mathcal{S}_n\cap\mathcal{T}_n }
  $
  converges to 
  $    
    \prob{ T(X)>t \,|\, \mathcal{S}\cap\mathcal{T} }
  $
  almost surely and in $L^1$
  as $n\rightarrow\infty$.
\end{lemma}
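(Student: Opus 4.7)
First, I would collapse the limiting right-hand side via Lemma~\ref{lem:asympInvarT}. Every $g \in G$ lies in some $G_{n_0}$, and $\abs{T_{n_0+m}(\pi_g X) - T_{n_0+m}(X)} \le 2 c_{n_0+m} \norm{X^{(n_0)}}$ vanishes as $m \to \infty$, so the limit $T = \lim_n T_n$ is strictly $G$-invariant. Consequently $\mathcal{T} = \sigma(T) \subseteq \mathcal{S}$, hence $\mathcal{S} \cap \mathcal{T} = \mathcal{T}$, and since $\{T(X) > t\}$ is $\mathcal{T}$-measurable,
\[
\prob{T(X) > t \mid \mathcal{S} \cap \mathcal{T}} = \indc{T(X) > t} \quad P\text{-almost surely.}
\]

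Writing $f_n = \prob{T_n(X^{(n)}) > t \mid \mathcal{S}_n \cap \mathcal{T}_n}$ and $Y_n = \indc{T_n(X^{(n)}) > t}$, I would split
\[
f_n - \indc{T(X)>t} = \bigl(f_n - Y_n\bigr) + \bigl(Y_n - \indc{T(X)>t}\bigr).
\]
The second bracket tends to $0$ almost surely at any continuity point $t$ of the distribution of $T(X)$, since $T_n(X^{(n)}) \to T(X)$ almost surely by definition of $T$; convergence in $L^1$ follows by bounded convergence, and the summability $\sum c_n^p < \infty$ combined with Borel--Cantelli (as in Lemma~\ref{lem:asympInvarT}) supplies the almost-sure rate.

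The first bracket is the main obstacle: it measures how far the $\mathcal{T}_n$-measurable $Y_n$ is from being $\mathcal{S}_n \cap \mathcal{T}_n$-measurable, equivalently how close the set $\{T_n > t\}$ is to $G_n$-invariance. The difficulty is that while $(\mathcal{S}_n)_n$ decreases to $\mathcal{S}$ and a Doob-style reverse martingale argument is available on that filtration, the additional factor $(\mathcal{T}_n)_n$ need not be monotone, so the joint sequence $(\mathcal{S}_n \cap \mathcal{T}_n)_n$ is not a reverse filtration. My plan is to bypass this by exploiting normal L\'evy concentration on $G_n$: because $T_n$ is $c_n$-Lipschitz with $n^{-1/2} c_n \norm{X^{(n)}} \convas 0$, the map $g \mapsto T_n(\pi_g X^{(n)})$ concentrates sharply in $g$ around its Haar median, so an argument parallel to Theorem~\ref{thm:condProb} shows that $f_n$ is asymptotically equal to the Haar group average $\rho(\{g \in G_n : T_n(\pi_g X^{(n)}) > t\})$, which in turn concentrates around $Y_n$. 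This is essentially the ingredient already packaged in Lemma~\ref{lem:groupAverageConvergence} (part (I) of the proof of Theorem~\ref{thm:asympInvariance}); the exponentially decaying tail bounds of the normal L\'evy property combined with Borel--Cantelli then yield $f_n - Y_n \to 0$ both in $L^1$ and almost surely, closing the argument.
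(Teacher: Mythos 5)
Your reduction of the limiting side to $\indc{T(X)>t}$ and your treatment of the second bracket are plausible in spirit (modulo the continuity-point caveat at $t$, which the paper also glosses over), but the first bracket $f_n - Y_n$ contains a genuine gap that your plan does not close. You assert that ``an argument parallel to Theorem~\ref{thm:condProb} shows that $f_n$ is asymptotically equal to the Haar group average $\rho(\{g\in G_n : T_n(\pi_g X^{(n)})>t\})$.'' But Theorem~\ref{thm:condProb} identifies the conditional probability with the group average only under condition C2, which is precisely the hypothesis that fails at finite $n$; and the asymptotic version of that identification is the conclusion of Theorem~\ref{thm:asympInvariance} itself, of which the present lemma is an ingredient (it is exactly part (III) of that proof). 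Lemma~\ref{lem:groupAverageConvergence} controls $\rho(\{g : T_n(\pi_g X^{(n)})>t\}) - \indc{T(X)>t}$, i.e.\ part (I); it says nothing about $\prob{T_n(X^{(n)})>t \mid \mathcal{S}_n\cap\mathcal{T}_n}$, and L\'evy concentration on $G_n$ by itself does not relate a conditional expectation with respect to $\mathcal{S}_n\cap\mathcal{T}_n$ to a Haar average. The argument is therefore circular at the decisive step.

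The paper's route is entirely different and never touches the group average in this lemma. It sets $Z_{n,m} = \xv[\indc{T(X^{(n)})>t}\mid\mathcal{S}_m\cap\mathcal{T}_m]$ and uses that the $\sigma$-fields $\mathcal{S}_m\cap\mathcal{T}_m$ are \emph{increasing} in $m$ (cylinder sets over ever more coordinates) --- not decreasing, as you state --- so the relevant tool is L\'evy's Upward Lemma rather than any reverse-martingale device. The structural observation doing the real work is that $T(X^{(n)})$ is $\mathcal{T}_n$-measurable, so $Z_{n,m}$ is constant for $m\ge n$ and hence $Z_{n,n}=Z_{n,\infty}$ almost surely; conditional dominated convergence in $n$ then gives $Z_{n,\infty}\convas Z$, and uniform integrability (Doob) upgrades the almost-sure convergence to $L^1$. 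If you want to salvage your decomposition, you would need an independent proof that $\{T_n(X^{(n)})>t\}$ is asymptotically $\mathcal{S}_n\cap\mathcal{T}_n$-measurable --- e.g.\ approximable by invariant sets in $P$-symmetric difference, with summable rates for the almost-sure claim --- and that is not supplied by the concentration lemma.
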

\begin{proof}
  Let 
  $Z_{n,m} := \xv[\indc{ T(X^{(n)})>t }|\mathcal{S}_m\cap\mathcal{T}_m]$,
  $Z_{n,\infty} := \xv[\indc{ T(X^{(n)})>t }|\mathcal{S}\cap\mathcal{T}]$, 
  $Z_{\infty,m} := \xv[\indc{ T(X)>t }|\mathcal{S}_m\cap\mathcal{T}_m]$, and
  $Z := \xv[\indc{ T(X)>t }|\mathcal{S}\cap\mathcal{T}]$.
  For $S\in \mathcal{S}_m$, $S = \{S^{(m)}\times\real\times\ldots\}$ 
  and similarly for sets in $\mathcal{T}_m$.  Hence,
  $\mathcal{S}_m\cap\mathcal{T}_m \subset
  \mathcal{S}_{m+1}\cap\mathcal{T}_{m+1}$, and thus
  the sequence $Z_{n,m}\rightarrow Z_n$ almost
  surely and in $L^1$ for any fixed $n$ as a 
  consequence of Levy's
  Upward Lemma; see \cite{ROGERSWILLIAMS1} section II.50.
  The same holds for $Z_{\infty,m}\rightarrow Z$.
  But furthermore, $T(X^{(n)})$ is, of course, 
  $\mathcal{T}_n$-measurable. Hence, for any fixed $n$ and
  all $m_1,m_2\ge n$, $Z_{n,m_1}=Z_{n,m_2}$
  almost surely.
  Hence, 
  \begin{equation}
    \label{eqn:ASEquality}
    \xv[\indc{ T(X^{(n)})>t }|\mathcal{S}_n\cap\mathcal{T}_n]
    =
    \xv[\indc{ T(X^{(n)})>t }|\mathcal{S}\cap\mathcal{T}]
  \end{equation}
  almost surely, and by the conditional dominated convergence
  theorem (\cite{ROGERSWILLIAMS1} section II.41),
  \begin{align*}
    \xv[\indc{ T(X^{(n)})>t }|\mathcal{S}_m\cap\mathcal{T}_m]
    &\convas 
    \xv[\indc{ T(X)>t }|\mathcal{S}_m\cap\mathcal{T}_m], \text{ and}\\
    \xv[\indc{ T(X^{(n)})>t }|\mathcal{S}\cap\mathcal{T}]
    &\convas
    \xv[\indc{ T(X)>t }|\mathcal{S}\cap\mathcal{T}]
  \end{align*}
  as $n\rightarrow\infty$.
  
  As a consequence of Equation~\ref{eqn:ASEquality}, the sequence
  $\{Z_{n,n}\}_{n=1}^\infty$ is almost surely equal to 
  $\{Z_{n,n+k}\}_{n=1}^\infty$ for any $k\in\natural$.
  As equality holds for all $k$, 
  $\{Z_{n,n}\}_{n=1}^\infty$ is almost surely equal to 
  $\{Z_{n,\infty}\}_{n=1}^\infty$.  As noted above,
  $Z_{n,\infty} \convas Z$ via dominated convergence.  Hence,
  $Z_{n,n}$ converges almost surely as well.
  
  Lastly, a classic theorem of Doob \citep[section II.44]{ROGERSWILLIAMS1}
  implies that the $Z_{n,m}$ are uniformly integrable.
  Hence, uniform integrability and convergence almost surely
  (in probability) implies convergence in $L^1$ by 
  Theorem 21.2 in chapter II of \cite{ROGERSWILLIAMS1}.
\end{proof}

\begin{lemma}
  \label{lem:groupAverageConvergence}
  For each $n$, let $G_n$ be a normal L\'evy family
  with respect to normalized Haar measure.
  Let $X\in\real^\infty$ be a random variable
  with projection $X^{(n)}\in\real^n$ onto the 
  first $n$ coordinates, and let 
  $T_n$ be $c_n$-Lipschitz. 
  If $n^{-1/2}c_n\norm{X^{(n)}}\convas0$
  then
  $$
    \abs*{
    \rho(\{
      g\in G_n \,:\,
      T_n(\pi_gX^{(n)}) > t
    \}) - \indc{T(X) > t}
    } \rightarrow 0 
  $$
  $P$-almost surely and in $L^1$ as $n\rightarrow\infty$.
\end{lemma}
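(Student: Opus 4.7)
My approach is to apply concentration of measure on the normal Lévy family $G_n$ to the orbit function $F_n(g) := T_n(\pi_g X^{(n)})$. The hypothesis $n^{-1/2} c_n \norm{X^{(n)}} \convas 0$ is tuned precisely so that the Gaussian-type concentration of this family collapses $F_n$ onto a single value as $n \to \infty$.

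First I would fix an outcome for $X$ and regard $F_n$ as a deterministic function on $G_n$. Since $T_n$ is $c_n$-Lipschitz on $\real^n$ and each $\pi_g$ is unitary, $F_n$ is Lipschitz on $(G_n,d_n)$ with constant at most a fixed multiple of $c_n \norm{X^{(n)}}$: for the Hilbert--Schmidt metric on the classical compact groups, or the Hamming metric on the reflection group, one has $\norm{\pi_g x - \pi_h x} \le d_n(g,h)\norm{x}$, so $|F_n(g)-F_n(h)| \le c_n \norm{X^{(n)}} d_n(g,h)$. Rescaling $F_n$ and applying the normal Lévy bound then yields
\[
  \rho\bigl(\{g \in G_n : |F_n(g) - M_n| > s\}\bigr) \le K \exp\bigl(-k n s^2 / (c_n \norm{X^{(n)}})^2\bigr),
\]
where $M_n$ is the $\rho$-median of $F_n$. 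Writing $\sigma_n := n^{-1/2} c_n \norm{X^{(n)}}$, the exponent becomes $-k s^2/\sigma_n^2$, so the right-hand side tends to $0$ $P$-a.s.\ for every fixed $s > 0$.

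Second, I would identify $M_n$ with $T(X)$. The crude Lipschitz bound $|F_n(g) - T_n(X^{(n)})| \le c_n \norm{\pi_g X^{(n)} - X^{(n)}} \le 2 c_n \norm{X^{(n)}}$ squeezes the entire range of $F_n$ into an interval of length $O(c_n \norm{X^{(n)}})$ around $T_n(X^{(n)})$, so in particular $|M_n - T_n(X^{(n)})| \le 2 c_n \norm{X^{(n)}}$. Combined with the convergence $T_n(X^{(n)}) \convas T(X)$ built into the definition of $T$ on $\real^\infty$, this gives $M_n \convas T(X)$ as soon as $c_n \norm{X^{(n)}} \convas 0$. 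For any $t$ that is a continuity point of the law of $T(X)$, eventually $M_n$ and $T(X)$ lie on the same side of $t$, and the concentration from the first step then produces $\rho(\{g : F_n(g) > t\}) \to \indc{T(X) > t}$ $P$-almost surely. The $L^1$ statement follows from bounded convergence since both $\rho(\{g : F_n(g) > t\})$ and $\indc{T(X) > t}$ lie in $[0,1]$.

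\textbf{Main obstacle.} The delicate step is closing the gap between the stated hypothesis $n^{-1/2} c_n \norm{X^{(n)}} \convas 0$, which suffices for the Lévy concentration in step one, and the stronger $c_n \norm{X^{(n)}} \convas 0$ implicitly needed for the identification $M_n \to T(X)$ via the crude Lipschitz bound. I expect to resolve this by importing the companion assumptions of Theorem~\ref{thm:asympInvariance}, specifically $\sum c_n^p < \infty$ together with control of $\xv\norm{X^{(n)}}^p$, which via Borel-Cantelli upgrades $n^{-1/2}$-normalized smallness to unnormalized smallness in the regimes of interest.
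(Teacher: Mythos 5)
Your proposal is correct and follows essentially the same route as the paper's proof: concentration of the $c_n\norm{X^{(n)}}$-Lipschitz orbit function $g\mapsto T_n(\pi_gX^{(n)})$ on the normal L\'evy family forces the Haar measure of $\{g : T_n(\pi_gX^{(n)})>t\}$ to collapse to $0$ or $1$ according to the side of $t$ on which $T(X)$ falls, with $L^1$ convergence by boundedness; centering at the median rather than the mean (the paper uses the mean plus Jensen's inequality) is immaterial. The ``obstacle'' you flag is real but is exactly how the paper itself proceeds --- the identification of the concentration point with $T(X)$ is delegated to Lemma~\ref{lem:asympInvarT}, whose hypotheses ($\sum_n c_n^p<\infty$ and moment control) are carried by Theorem~\ref{thm:asympInvariance}, so your resolution matches the paper's.
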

\begin{proof}
  Let $f_{x,n}:G_n\rightarrow\real$ be defined as 
  $f_{x,n}(g) = T_n(\pi_gx)$.  Then, for any 
  $g,h\in G_n$ with unitary representations 
  $\pi_g,\pi_h\in \mathcal{L}(\real^n)$,
  \begin{align*}
    \abs{f_{x,n}(g)-f_{x,n}(h)} &=
    \abs{T(\pi_gx)-T(\pi_hx)} \\
    &\le 
    c_n\norm{\pi_gx-\pi_hx}\\
    &\le 
    c_n\norm{x}\norm{\pi_g-\pi_h}_{\mathcal{L}(\real^n)}.
  \end{align*}
  Thus, $f_{x,n}$ is $c_n\norm{x}$-Lipschitz on 
  $\mathcal{L}(\real^n)$
  with respect to the operator norm.  
  As a consequence of the $G_n$ forming a normal
  L\'evy family,
  there exists fixed constants $K,k>0$ 
  such that for all 
  $t\ge0$ and $n\ge1$
  $$
    \rho\left(
      \abs{f_{x,n}(g) - \int f_{x,n}(g)d\rho(g)}>t
    \right)
    \le K\exp\left(
      -\frac{knt^2}{2c_n^2\norm{x}^2}
    \right).
  $$
  Consequently,
  \begin{equation}
  \label{eqn:concIneqManifolds}
  \rho(\{
      g\in G_n \,:\,
      T_n(\pi_gX^{(n)}) > t
    \}) \le
    K\exp\left(
      -\frac{nk}{2c_n^2\norm{X^{(n)}}^2}
      \left(t-\int f_{X^{(n)}}(g)d\rho(g)\right)_+^2
    \right).
  \end{equation}
  By Jensen's inequality, the right hand side of 
  inequality~\ref{eqn:concIneqManifolds} is
  bounded above by
  $$
    K\int\exp\left(
      -\frac{nk}{2c_n^2\norm{X^{(n)}}^2}
      \left(t-f_{X^{(n)}}(g)\right)_+^2
    \right)d\rho(g).
  $$
  For any fixed $\omega\in\Omega$ and any $\veps>0$, let 
  $t_{\omega,\veps} = T(X(\omega))+\veps$.
  Then, by dominated convergence, the 
  assumption that 
  $n^{-1/2}c_n\norm{X^{(n)}}\convas0$, and 
  Lemma~\ref{lem:asympInvarT},
  $$
    \rho(\{
      g\in G_n \,:\,
      T_n(\pi_gX^{(n)}) > t_{\omega,\veps}
    \}) \rightarrow 0 
  $$
  Hence, for all $\omega\in\Omega$ such that
  ${T(X(\omega)) \le t}$,
   $\rho(\{
    g\in G_n \,:\,
    T_n(\pi_gX^{(n)}(\omega)) > t
  \})\rightarrow 0$.
  As the concentration inequality is agnostic to
  direction, the above argument can be
  redone for $
    1 - \rho(\{
      g\in G_n \,:\,
      T_n(\pi_gX^{(n)}) > t
    \}) = 
    \rho(\{
      g\in G_n \,:\,
      T_n(\pi_gX^{(n)}) \le t
    \})
  $
  to conclude that 
  $
    \rho(\{
      g\in G_n \,:\,
      T_n(\pi_gX^{(n)}) > t
    \}) \convas \indc{T(X) > t}.
  $
  
  For convergence in $L^1$, it is trivial to note that 
  $\sup_n \abs{ \rho(\{
      g\in G_n \,:\,
      T_n(\pi_gX^{(n)}) > t
    \})} \le 1$.  Thus, the sequence 
    $\rho(\{
      g\in G_n \,:\,
      T_n(\pi_gX^{(n)}) > t
    \})$ is uniformly integrable and 
    converges almost surely, and hence in probability,
    from the first part of this lemma.  Hence,
    $$
    \xv\abs*{
    \rho(\{
      g\in G_n \,:\,
      T_n(\pi_gX^{(n)}) > t
    \}) - \indc{T(X) > t}
    } \rightarrow 0 
    $$
    by Theorem~10.3.6 of \cite{DUDLEY2002}.
\end{proof}

\begin{corollary}
  \label{cor:asympSize}
  Under the setting of Theorem~\ref{thm:asympInvariance},
  $$
    \lim_{n\rightarrow\infty}
    \prob{
      T_n(X^{(n)}) > t_\alpha(X^{(n)})  
    } \le \alpha.
  $$
\end{corollary}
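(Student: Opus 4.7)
The plan is to follow the proof of Corollary~\ref{cor:condProb} verbatim, but with the exact identity between conditional probability and Haar measure replaced by the asymptotic $L^1$ identity furnished by Theorem~\ref{thm:asympInvariance}. First, by the tower property of conditional expectation,
$$
  \prob{ T_n(X^{(n)}) > t_\alpha(X^{(n)}) }
  = \xv\left[
    \prob{ T_n(X^{(n)}) > t_\alpha(X^{(n)}) \,|\, \mathcal{S}_n\cap\mathcal{T}_n }
  \right],
$$
and by the very definition of the threshold,
$\rho(\{ g\in G_n : T_n(\pi_g X^{(n)}) > t_\alpha(X^{(n)}) \}) \le \alpha$
holds $P$-almost surely. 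So it would suffice to show that the integrand on the right-hand side is within $o(1)$ in $L^1$ of this Haar-measure quantity, take expectations, and then send $n\rightarrow\infty$.

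Second, Theorem~\ref{thm:asympInvariance} supplies precisely such an $L^1$ comparison whenever the threshold is a fixed real number. The key step is therefore to upgrade the deterministic $t$ to the random threshold $t_\alpha(X^{(n)})$. I would observe that $t_\alpha$ depends on its argument only through the orbit $\{\pi_g x : g \in G_n\}$, since substituting $\pi_h x$ for $x$ merely re-parametrizes the integrand in $g$; hence $t_\alpha(X^{(n)})$ is $\mathcal{S}_n$-measurable and all conditional probabilities above remain well defined. Furthermore, the concentration estimate driving Lemma~\ref{lem:groupAverageConvergence} depends on $t$ only through the non-negative gap $(t - \int f_{X^{(n)}}(g)\,d\rho(g))_+^2$, so the bound transfers to the random threshold directly, while Lemmas~\ref{lem:asympInvarT} and~\ref{lem:martConverge} are agnostic to the particular value of $t$.

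The main obstacle is precisely this upgrade to a random threshold, since Theorem~\ref{thm:asympInvariance} is only stated pointwise in $t \in \real$. The cleanest remedy is a discretization argument: both $t \mapsto \prob{ T_n(X^{(n)}) > t \,|\, \mathcal{S}_n\cap\mathcal{T}_n}$ and $t \mapsto \rho(\{ g\in G_n : T_n(\pi_g X^{(n)}) > t \})$ are monotone non-increasing and right-continuous in $t$, so one compares them on a countable dense grid, invokes Theorem~\ref{thm:asympInvariance} at each grid point, and then sandwiches the value at $t_\alpha(X^{(n)})$ between neighbouring grid points to transfer the $L^1$ convergence. Combined with the almost-sure bound on the Haar-measure quantity, this yields
$$
  \prob{ T_n(X^{(n)}) > t_\alpha(X^{(n)}) }
  \le \alpha + \xv\abs*{
    \prob{ T_n(X^{(n)}) > t_\alpha(X^{(n)}) \,|\, \mathcal{S}_n\cap\mathcal{T}_n }
    - \rho(\{ g : T_n(\pi_g X^{(n)}) > t_\alpha(X^{(n)}) \})
  },
$$
and passing to the limit $n \rightarrow \infty$ finishes the proof.
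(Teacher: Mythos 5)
Your outline is the same as the paper's: tower property, replace the conditional probability by the Haar-measure quantity up to an $L^1$ error supplied by Theorem~\ref{thm:asympInvariance}, then invoke the almost-sure bound $\rho(\{g\in G_n : T_n(\pi_g X^{(n)})>t_\alpha(X^{(n)})\})\le\alpha$ coming from the definition of the threshold. The paper's proof is exactly these three lines and applies Theorem~\ref{thm:asympInvariance} at the random argument $t=t_\alpha(X^{(n)})$ without comment; you are right to single this out as the step that actually needs justification, and your observations that $t_\alpha$ is constant on $G_n$-orbits (hence $\mathcal{S}_n$-measurable) and that the concentration estimate sees $t$ only through the gap $(t-\int f_{X^{(n)}}(g)\,d\rho(g))_+$ are both correct.

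However, the discretization you propose to pass from fixed to random $t$ does not close the gap. Write $A_n(t)=\rho(\{g\in G_n : T_n(\pi_gX^{(n)})>t\})$ and $B_n(t)=\prob{T_n(X^{(n)})>t\mid\mathcal{S}_n\cap\mathcal{T}_n}$, and let $t_k\le t_\alpha(X^{(n)})<t_{k+1}$ be the bracketing grid points. Monotonicity gives $B_n(t_\alpha(X^{(n)}))-A_n(t_\alpha(X^{(n)}))\le [B_n(t_k)-A_n(t_k)]+[A_n(t_k)-A_n(t_{k+1})]$; the first bracket is $o(1)$ by the theorem, but the second is the oscillation of $A_n$ over the grid cell containing the threshold, and this does not vanish. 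Lemma~\ref{lem:groupAverageConvergence} shows $A_n(t)\convas\indc{T(X)>t}$ for each fixed $t$, so $A_n$ degenerates to a step function with its jump at $T(X)$, and by the same lemma $t_\alpha(X^{(n)})\rightarrow T(X)$ almost surely: the random threshold accumulates exactly at the discontinuity of the limit, so the oscillation over the cell containing it tends to one rather than zero, and the sandwich yields only the vacuous bound $\alpha+1+o(1)$. What is really needed is a version of Theorem~\ref{thm:asympInvariance} that is uniform in $t$, or at least locally uniform near the accumulation point $T(X)$; neither your sketch nor, for that matter, the paper's own one-line application of the theorem at a random argument supplies this, so the step remains a genuine gap in your proposal.
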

\begin{proof}
  From Theorem~\ref{thm:asympInvariance}, 
  for any $\veps>0$, there exists an $N\in\natural$
  such that for all $n>N$,
  \begin{align*}
    \prob{ T_n(X^{(n)}) > t_\alpha(X^{(n)}) } 
    &=
    \xv\left[
      \prob{
        T_n(X^{(n)}) > t_\alpha(X^{(n)})  
        \,\mid\, \mathcal{S}_n\cap\mathcal{T}_n
      }
    \right]\\
    &\le 
    \xv\left[
      \rho\left(\{
      g\in G_n \,:\,
      T_n(\pi_gX^{(n)}) > t_\alpha(X^{(n)})
    \}\right)
    \right] + \veps
    \le \alpha + \veps.
  \end{align*}
  Taking $\veps\rightarrow0$ finishes the proof.
\end{proof}

\subsection{Remark on Group Selection}
\label{sec:groupSelection}

The above theorems and corollaries can hold for 
a multitude of groups.  In particular, if they
hold for a group $G$, then they hold for any 
subgroup of $G$.  
The choice of $G$ directly results in a choice of 
$\mathcal{S}$, the $\sigma$-field of invariant 
sets.  Indeed, a ``larger'' group $G$ will make
$\mathcal{S}$ ``smaller'', and thus, 
the randomized $\Indc{ T_n(\pi_gX^{(n)}) > t }$
can be used
to extract more information about 
$\Indc{ T_n(X^{(n)}) > t }$.
When conditioning on $\mathcal{S}$, the smaller 
$\mathcal{S}$ is, the more restricted the conditional 
probability will be.

For illustrative purposes, let $G$ be the 
trivial group.  In such a scenario, 
the random measures 
$
  \rho\left(\{
    g\in G_n \,:\,
    T_n(\pi_gX^{(n)}) > t
  \}\right)
$
and 
$
  \prob{
    T_n(X^{(n)}) > t  \,|\,
    \mathcal{S}_n\cap\mathcal{T}_n
  }
$
coincide with 
$\Indc{ T_n(X^{(n)}) > t }$
and no meaningful inference is achievable.
In particular, the randomization threshold
for a fixed $x$
is $t_\alpha(x) = T(x)$, and the conclusion of
Corollary~\ref{cor:asympSize} is the immensely unhelpful 
fact that $\lim_{n\rightarrow \infty} 0 \le \alpha$.

For a richer discrete group $G$ with cardinality
$\abs{G}$, the random measure 
$\rho(\{
    g\in G\,:\, T(\pi_gX)>t
\})$
can take on at most the values 
$i/\abs{G}$ for $i=0,1,\ldots,\abs{G}$.
Hence, the finer granularity of, say, the 
symmetric group over the alternating group
or the cyclic group may be preferable.  

In a statistical context,  
group selection for randomization tests 
is intimately connected 
to the null hypothesis under examination.
The tail probability $\prob{
      T(X) > t_\alpha(X)
    }$
from the above corollaries 
corresponds to a p-value concerned with whether
or not condition C2 holds.  That is,
the p-value is for the following hypotheses:
$$
  H_0: T(X) \eqdist T(\pi_g X)~\forall g\in G,
  ~~~~
  H_1: \exists g\in G~ s.t.~
  T(X) \overset{\text{d}}{\ne} T(\pi_g X).
$$
One approach may be to select the maximal 
invariant group of transformations that preserve
the distribution of $T(X)$ under the null hypothesis.
However,
the recent work of \cite{KONING_HEMERIK_2023}
proposes a clever approach to subgroup selection
for improving the performance of the
classical permutation test, which opens up
more research questions into the best choice
of $G$.

\subsection{Remark on Lipschitz constants}
\label{sec:lipSelection}

As far as Theorem~\ref{thm:asympInvariance} is 
concerned, the sequence of Lipschitz constants
$c_n$ can be arbitrary as long as the conditions
continue to hold.  Thus, the choice of $c_n$
is dependent on the problem under consideration.

In Section~\ref{sec:OneTest}, a simple example of
a one-sample location test is considered
for \iid $X_1,\ldots,X_n$.  
The function $T$ is chosen to be
$T(X^{(n)}) = n^{-1/2}\sum_{i=1}^nX_i$ in order
to contrast the result with the Berry-Esseen
Theorem.  However, the standard sample mean
$\bar{X} = n^{-1}\sum_{i=1}^nX_i$ is another
valid choice for $T$, which would, in contrast
to the $n^{-1/2}$-normalization, require weaker
moment assumptions on that random variables $X_i$.

Venturing deeper into this Lipschitzian rabbit hole,
one could easily consider the function $T$ defined as
$T(X^{(n)}) = n^{-2}\sum_{i=1}^nX_i$.  As far as
Theorem~\ref{thm:asympInvariance} is concerned,
this function is perfectly valid.  Of course,
it is well known that $T(X^{(n)})\convas0$
as long as $\xv\abs{X_1}^{1/2}<\infty$
and, of note,
irregardless of the mean of $X_1$
\cite[Theorem 3.2.3]{STOUT1974}.
Thus, this choice of $T$ is perfectly useless 
for statistical inference purposes.
The conclusion is that $T$ must be chosen
preemptively for the problem at hand and 
not to post-hoc make Theorem~\ref{thm:asympInvariance}
applicable.
Such selection of $c_n$ occurs in the subsequent 
Section~\ref{sec:lpballs} where $c_n$ is chosen
to achieve the right convergence properties.

In summary, the requirement of Theorem~\ref{thm:asympInvariance}
that $c_n\norm{ X^{(n)} }/\sqrt{n}\convas0$
dictates the relation between $c_n$ and $X^{(n)}$.
The faster $c_n$ tends to zero, the fewer conditions
are required on the moments of $X^{(n)}$.
However, if $c_n$ decreases too fast, the utility 
of $T$ is lost.  And furthermore, if $X^{(n)}$ is
a well-behaved random vector, then there are many
more valid choices of $c_n$.

\section{Uniform points in an $\ell_p^n$-ball}
\label{sec:lpballs}

As a toy application of Theorem~\ref{thm:asympInvariance},
convergence properties can be derived for sums of 
coordinates for random vectors within an $\ell_p^n$-ball.
The goal of this section is to show a 
law of the iterated logarithm
for uniformly random points in an $\ell_p^n$-ball, 
i.e. uncorrelated and bounded but not independent.
Theorems~\ref{thm:condProb} and~\ref{thm:asympInvariance}
allow for a quick proof of an almost sure upper 
bound with correct asymptotic rate, but sub-optimal
constant dependent on the concentration behaviour
of random group elements.

Let $X^{(n)}$ be a uniformly random point inside the 
$\ell_p^n$-ball, i.e. $\sum_{i=1}^n \abs{X_i}^p\le1$,
and function $T_n(X^{(n)}) = c_n\iprod{\boldsymbol{1}_n}{X^{(n)}}$
for some normalizing constant depending on $n$ such as
$c_n = n^{-1/2}$ and the $n$-long vector
$\TT{\boldsymbol{1}}_n = (1,\ldots,1)$.  
Thus, $T_n:\real^n\rightarrow\real$
is $c_n$-Lipschitz.  Two simple examples follow.

\begin{example}[$\ell_\infty$ and $\ell_p$-balls for $p\le1$]
  Let $X^{(n)}$ be a uniformly random point within the 
  $\ell_\infty^n$-ball, i.e. 
  $\max_{i=1,\ldots,n}\abs{X_i}\le1$. 
  Then, it is well known via the central limit 
  theorem, Chebyshev's inequality, and the 
  strong law of large numbers, respectively,
  that 
  $$
    \sum_{i=1}^n \frac{X_i}{n^{1/2}}
    \convd Z\dist\distNormal{0}{1},
    ~~
    \sum_{i=1}^n \frac{X_i}{n^q}
    \convp 0,
    \text{ and }
    \sum_{i=1}^n \frac{X_i}{n^{q+1/2}}
    \convas 0
  $$
  for any choice of $q>1/2$.
  Furthermore, as the collection of $X_i$ 
  are \iid Uniform$[-1,1]$ random variables,
  the Law of the Iterated Logarithm 
  (see, for example, \cite{deACOSTA1983} or 
  \cite{LEDOUXTALAGRAND1991} Chapter 8) 
  implies that
  $$
    \limsup_{n\rightarrow\infty}
    \sum_{i=1}^n\frac{X_i}{\sqrt{2n\log\log n}}
    = 1 \text{ and }
    \liminf_{n\rightarrow\infty}
    \sum_{i=1}^n\frac{X_i}{\sqrt{2n\log\log n}}
    = -1
  $$
  and thus
  $
    n^{-q}\sum_{i=1}^n {X_i}
    \convas 0
  $
  for any choice of $q>1/2$.

  Let $X^{(n)}$ be a uniformly random point within the 
  $\ell_p^n$-ball for $p\le1$.  Then, 
  $\sum_{i=1}^n \abs{X_i} \le 1$.  Hence,
  $$
    \sum_{i=1}^n \frac{X_i}{n^q}
    \convp 0
  $$
  for any choice of $q>0$.
  Furthermore, a quick calculation\footnote{
    See Appendix~\ref{app:lpballs} for tedious details.
  } 
  for the 
  $\ell_1^n$-ball shows that 
  $\xv X_1^2 \le C/n^2$ for some constant $C>0$.
  Thus,
  $
    {n^{-q}}\sum_{i=1}^n {X_i}
    \convas 0
  $
  for any $q>0$ via Chebyshev's inequality 
  and the first Borel-Cantelli lemma.
  The goal of what follows is to extend this 
  idea to other $\ell_p^n$-balls
  and achieve more precise rates of convergence.
\end{example}

The group of interest is the 
special orthogonal group, 
$SO(n) = \{ M\in \real^{n\times n} \,:\, 
\TT{M}M = M\TT{M} = I,\,\det(M)=1 \}$.
Integrating over $SO(n)$ with respect to its normalized
Haar measure $\rho$ yields
$$
  \xv_{SO(n)} T_n(MX^{(n)}) = 0
  \text{ and }
  \xv_{SO(n)} T_n(MX^{(n)})^2 = c_n^2\norm{X}_2^2.
$$
Indeed, for the second moment calculation, 
$$
  \xv_{SO(n)} T_n(MX^{(n)})^2 =
  c_n^2\xv_{SO(n)}\left[
    \TT{ (MX^{(n)}) }\boldsymbol{1}_n\TT{\boldsymbol{1}}_n
    (MX^{(n)})
  \right],
$$
and the calculation becomes a consequence of the
following lemma.

\begin{proposition}
  \label{lem:rotInnerProduct}
  Let
  $A\in\real^{n\times n}$ be a symmetric
  matrix with spectrum $\lmb_1\ge\lmb_2\ge\ldots\ge\lmb_n$, 
  and let
  $b_A:\real^n\times\real^n\rightarrow \real^+$
  be a bilinear form defined by 
  $$
    b_{A,SO(n)}(x,y) = 
    \int_{SO(n)}
    \TT{(Mx)}A(My) d\rho(M)
  $$
  where integration is taken over $SO(n)$ with 
  respect to Haar measure $\rho$.
  Then, $b_{A,SO(n)}(x,y)$ is rotationally invariant
  and furthermore
  $$
    b_{A,SO(n)}(x,y) = \bar{\lmb}\iprod{x}{y}
  $$
  where $\iprod{\cdot}{\cdot}$ is the standard Euclidean 
  inner product and $\bar{\lmb} = n^{-1}\sum_{i=1}^n\lmb_i$. 
\end{proposition}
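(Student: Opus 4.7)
My plan has three steps: realize the bilinear form as a single matrix, use group invariance to force that matrix to be a scalar multiple of the identity, and then pin down the scalar via a trace.

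First, by linearity of the integral, write $b_{A,SO(n)}(x,y) = \TT{x} B y$ where
$$
  B = \int_{SO(n)} \TT{M} A M \, d\rho(M).
$$
Since $A$ is symmetric, each integrand $\TT{M} A M$ is symmetric, and so $B$ is symmetric. For rotational invariance of $b_{A,SO(n)}$, fix $R \in SO(n)$ and substitute $N = MR$ in the integral defining $b_{A,SO(n)}(Rx,Ry)$; right-invariance of Haar measure on the compact (hence unimodular) group $SO(n)$ immediately gives $b_{A,SO(n)}(Rx,Ry) = b_{A,SO(n)}(x,y)$.

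Translated into a statement about $B$, this invariance becomes $\TT{R} B R = B$, i.e.\ $BR = RB$, for every $R \in SO(n)$. The crux of the argument is to conclude from this that $B = cI$. For $n \ge 3$ this is immediate from Schur's lemma, since the defining representation of $SO(n)$ on $\real^n$ is absolutely irreducible and hence its real commutant is $\real I$. For $n=2$ the real commutant of $SO(2)$ is two-dimensional (spanned by $I$ and a $90^\circ$ rotation $J$), but the symmetry of $B$, inherited from that of $A$, kills the $J$-component. A more hands-on alternative that works uniformly in $n$ is to spectrally decompose $B = \TT{U}\mathrm{diag}(\mu_1,\ldots,\mu_n)U$ and use rotations permuting pairs of coordinates in $U$'s frame to force all the $\mu_i$ to coincide.

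Finally, the scalar $c$ is identified by taking a trace:
$$
  nc = \mathrm{tr}(B) = \int_{SO(n)} \mathrm{tr}(\TT{M} A M) \, d\rho(M) = \int_{SO(n)} \mathrm{tr}(A M \TT{M}) \, d\rho(M) = \mathrm{tr}(A) = \sum_{i=1}^n \lmb_i,
$$
using cyclicity of the trace together with $M \TT{M} = I$. Hence $c = \bar{\lmb}$ and $b_{A,SO(n)}(x,y) = \bar{\lmb}\iprod{x}{y}$. The only genuine technical subtlety I anticipate is the identification $B = cI$ in the low-dimensional case $n = 2$, where irreducibility alone is insufficient and symmetry of $A$ must be invoked; everything else is routine.
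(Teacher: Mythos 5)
Your proof is correct, and it reaches the conclusion by a genuinely different computation than the paper's. Both arguments share the first reduction: rotational invariance forces the form to be $c\iprod{x}{y}$ for a single scalar $c$. You justify this more carefully than the paper does — you verify the invariance itself via right-invariance (unimodularity) of Haar measure, pass to the commutant condition $BR=RB$, invoke absolute irreducibility of the defining representation for $n\ge 3$, and correctly flag that $n=2$ needs the symmetry of $B$ to kill the $J$-component of the two-dimensional real commutant; the paper simply asserts the reduction "by rotational invariance." Where the two proofs genuinely diverge is in identifying $c$: the paper evaluates $c=\int_{\norm{v}=1}\TT{v}Av\,d\mu$ over the unit sphere by partitioning the sphere into orderings of the coordinates and counting permutations in $\mathbb{S}_n$, which is the bulk of its argument, whereas you obtain $nc=\mathrm{tr}(B)=\mathrm{tr}(A)$ in one line from cyclicity of the trace and $M\TT{M}=I$. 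Your trace identity is shorter, avoids the measure-zero and symmetry bookkeeping on the sphere, and generalizes immediately to any compact group acting orthogonally and irreducibly; the paper's sphere computation has the minor virtue of being elementary and self-contained, not appealing to representation-theoretic irreducibility. No gaps.
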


\begin{proof}
  Any bilinear form on 
  a real Hilbert Space is of the form $\iprod{Mx}{y}$ for
  some bounded operator $M$.
  Hence, $b(x,y) = \sum_{i=1}^ncx_iy_i$ for
  some $c>0$ by rotational invariance.  
  Without loss of generality, let $A$ be diagonal 
  with entries $\lmb_1,\ldots,\lmb_n$.  Then, choosing $x=y$ to
  be any unit vector results in 
  $$
    c = \int_{\norm{v}=1} \TT{v}Av dv
      = \int_{\norm{v}=1} \sum_{i=1}^n \lmb_i v_i^2 d\mu
  $$
  where $\mu$ is the uniform surface measure of the $(n-1)$-sphere. 
  By symmetry, the integral can be restricted to fraction 
  of the sphere where $v_i\ge0$.  Furthermore, $\{v_i=v_j\,:\,i\ne j\}$
  is a measure zero event.  Thus,
  \begin{align*}
    c &= 2^n
    \int_{v_1>\ldots>v_n\ge0} \sum_{\pi\in\mathbb{S}_n} 
      \sum_{i=1}^n \lmb_i v_{\pi(i)}^2 d\mu\\
    &= 2^n\int_{v_1>\ldots>v_n\ge0}  
       \sum_{i=1}^n \lmb_{i} \sum_{\pi\in\mathbb{S}_n} v_{\pi(i)}^2  d\mu\\
    &= 2^n\int_{v_1>\ldots>v_n\ge0}  
       \sum_{i=1}^n \lmb_{i}(n-1)!  d\mu\\
    &= \frac{2^n}{n!2^n}\sum_{i=1}^n \lmb_{i}(n-1)! = 
    \frac{1}{n}\sum_{i=1}^n \lmb_{i}
  \end{align*}
  as the sum is over $n!$ permutations in $\mathbb{S}_n$, 
  which is grouped into $(n-1)!$ sets of $n$ $v_i^2$'s 
  that sum to 1.
\end{proof}

Let $f_{n,x}:SO(n)\rightarrow\real$ be defined as 
$f_{n,x}(M):= T_n(Mx)$.  Then, as a consequence of
the Cauchy-Schwarz inequality, 
$$
  \abs{ f_{n,x}(M_1) - f_{n,x}(M_2) }
  \le c_n\sqrt{n}\norm{x}_2\norm{M_1-M_2}_\text{HS}
$$
making $f_{n,x}$ a 
$c_n\sqrt{n}\norm{x}_2$-Lipschitz function on 
$SO(n)$ with respect to the Hilbert-Schmidt metric.
Concentration of measure for the classical compact 
groups \citep[Theorem 5.17]{MECKES2019} implies that
$$
  \rho\left[
    f_{n,x}(M) \ge t
  \right] \le \exp\left[
    -\left(\frac{n-2}{n}\right)\frac{t^2}{24c_n^2\norm{x}_2^2}
  \right] 
  \le
  \exp\left[
    -\frac{\kappa_0 t^2}{c_n^2\norm{x}_2^2}
  \right]
$$
for  $n\ge3$ and some dimension independent constant
$\kappa_0>0$, e.g. $\kappa_0 = 1/72$.

In what follows, it is shown that 
\begin{equation}
  \label{eqn:lilConj}
  \limsup_{n\rightarrow\infty}
  \frac{
    \abs*{\sum_{i=1}^{n} X_i}
  }{Kn^{1/2-1/p}\sqrt{ \log\log n }}
  \le 1, ~~\text{a.s.}
\end{equation}
for some constant $K>0$.
Our conjecture is $K = 2^{1/2+1/p}$.
Figure~\ref{fig:lilBall} displays the
value of 
$
  {
    \abs*{\sum_{i=1}^{n} X_i}
  }/{2^{1/2+1/p}n^{1/2-1/p}\sqrt{ \log\log n }}
$
computed for 1000 simulated uniform vectors 
within the $\ell^n_p$-ball for $p = 1,2,\infty$
and $n = 10^1,10^2,\ldots,10^6$.
An algorithm for simulating such random vectors
is detailed in Appendix~\ref{app:ballSim}.

\begin{figure}
  \centering
  \includegraphics[width=0.75\textwidth]{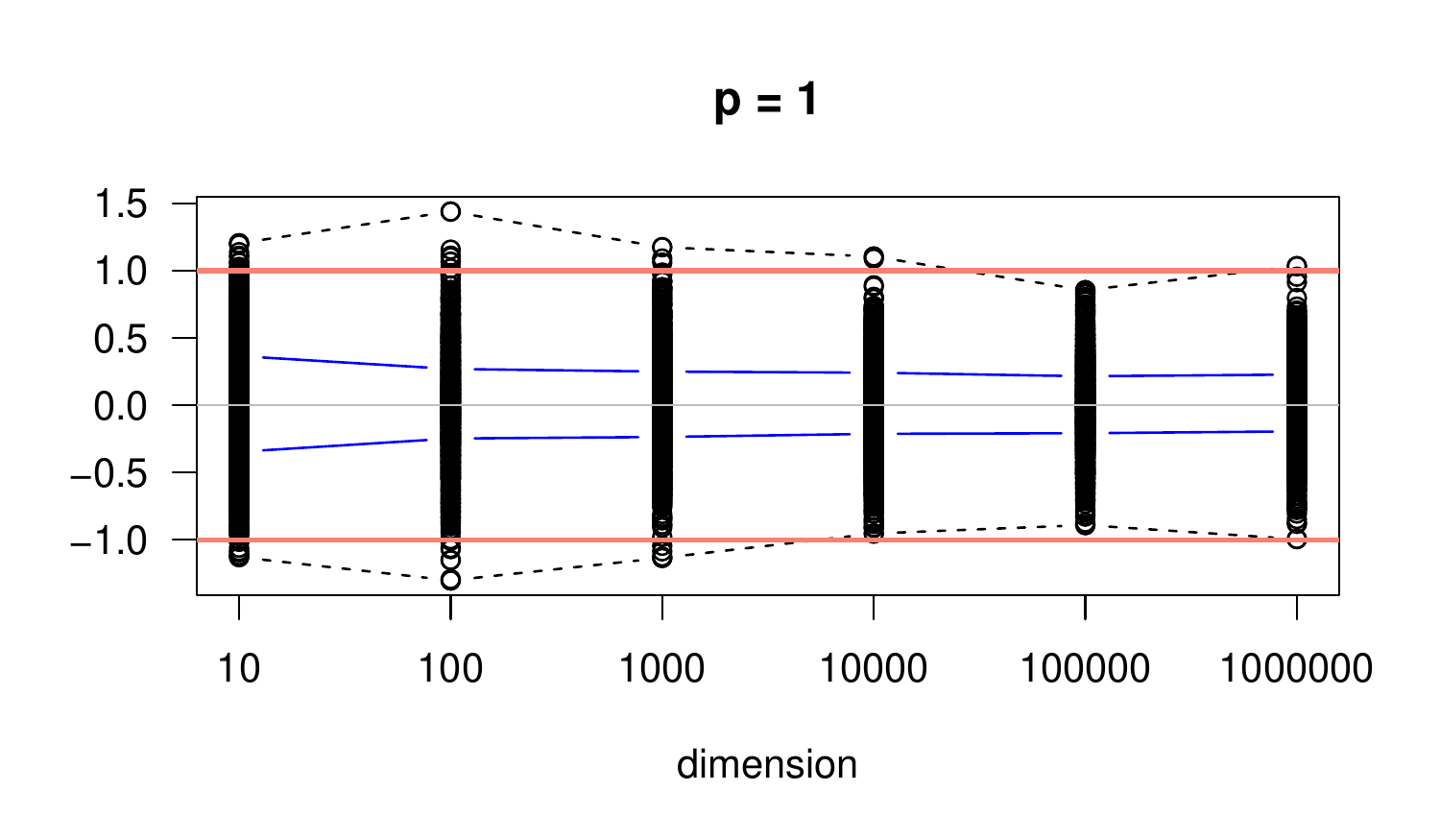}
  \vspace{-0.25in}
  
  \includegraphics[width=0.75\textwidth]{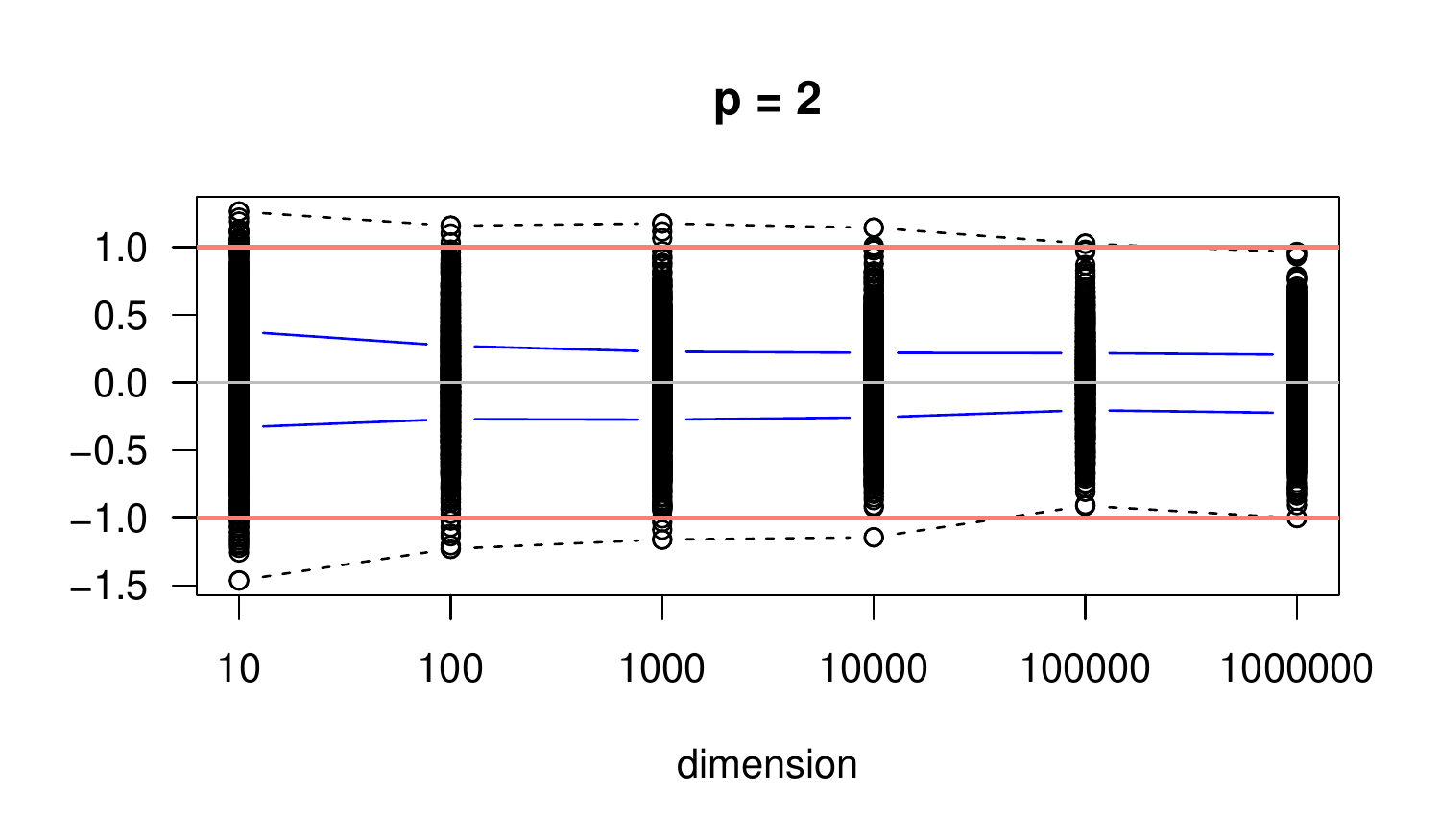}
  \vspace{-0.25in}
  
  \includegraphics[width=0.75\textwidth]{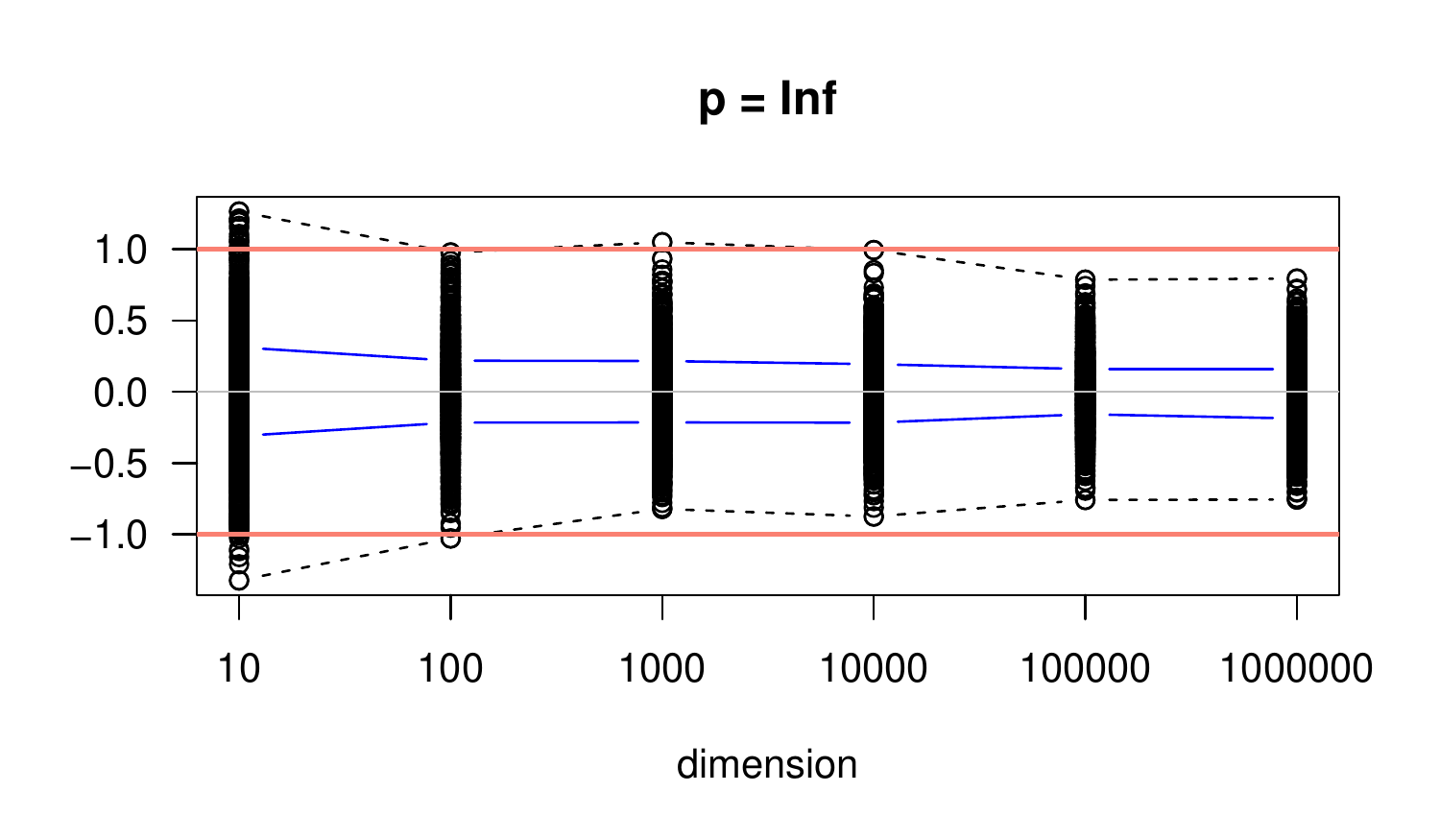}
  \vspace{-0.25in}
  
  \caption{
    \label{fig:lilBall}
    For dimensions $n=10^1,\ldots, 10^6$, 1000 vectors
    are generated from a uniform distribution on the 
    $\ell^n_p$-ball, and for each vector, 
    Equation~\ref{eqn:lilConj} with
    conjectured optimal constant is computed (black circles). 
    The red lines are for $\pm1$; the blue are for the 1st and 3rd
    quartile; the dashed black are for the min and max values.
  }
\end{figure}

\subsection{Case $p=2$}

If $X^{(n)}$ is a uniformly random point in the 
$\ell_2^n$-ball, then the measure induced by $X^{(n)}$
is invariant to any rotation $M\in SO(n)$.
Thus, Theorem~\ref{thm:condProb} and the above 
concentration of measure result imply that,
for any fixed $t>0$,
$$
  \prob{
    n^{-q}\sum_{i=1}^n X_i > t
  } \le
  \ee^{-\kappa_0 n^{2q}t^2}
$$
where $c_n = n^{-q}$.  Thus, $n^{-q}\sum_{i=1}^n X_i$
converges to zero in probabiliy for any $q>0$.
Furthermore, from a 
standard application of the first Borel-Cantelli
Lemma, $n^{-q}\sum_{i=1}^nX_i\rightarrow0$ almost
surely for any choice of $q>1/2$ similar to the 
direct computation approach performed in the appendix.

Going further, it can be quickly shown that for some 
$K>0$,
$$
  \limsup_{n\rightarrow\infty}
  \frac{\abs*{\sum_{i=1}^n X_i}}{K\sqrt{ \log\log n }}
  \le 1
$$
using similar arguments as is
\cite{STOUT1974},
\cite{deACOSTA1983}, and others.
Indeed, fix $\veps>0$, and note that the sequence
$(X_i)_{i=1}^n$ is a symmetric sequence, and hence
L\'evy's maximal inequalities are applicable
\citep[Proposition 2.3]{LEDOUXTALAGRAND1991}.
Let $n_k = \lfloor c_0^k\rfloor$ 
for some $c_0>1$.
Then, 
\begin{multline*}
  \prob{
    \frac{
      \max_{n\le n_k}\abs*{\sum_{i=1}^{n} X_i}
    }{K\sqrt{ \log\log n_k }}
    \ge 1+\veps
  }
  \le 
  2\prob{
    \frac{\abs*{\sum_{i=1}^{n_k} X_i}}{K\sqrt{ \log\log n_k }}
    \ge 1+\veps
  } 
  \\ \le
  2\exp\left(
    -\kappa_0(1+\veps)^2K^2 \log\log n_k
  \right)
  \le
  2(\log n_k)^{-\kappa_0(1+\veps)^2K^2}.
\end{multline*}
Hence, the series 
$$
  \sum_{k=1}^\infty
  \prob{
    \frac{
      \max_{n\le n_k}\abs*{\sum_{i=1}^{n} X_i}
    }{K\sqrt{ \log\log n_k }}
    \ge 1+\veps
  } <\infty
$$
given that $K$ is chosen such that 
$\kappa_0K^2\ge1$.  Thus, by invoking the first 
Borel-Cantelli lemma and taking $\veps\rightarrow0$
results in 
$$
  \limsup_{n\rightarrow\infty}
  \frac{\abs*{\sum_{i=1}^{n} X_i}}{K\sqrt{ \log\log n }}
  \le 1, ~~\text{a.s.}
$$

\subsection{Case $p>2$}

If $X^{(n)}$ is a uniformly random point in the 
$\ell^n_p$-ball for $2<p<\infty$, then the induced
measure is not rotationally invariant.
However, Theorem~\ref{thm:asympInvariance}
can still be applied.
Indeed,
let $c_n = n^{1/p-1/2}/\sqrt{ \log\log n }$.
As $X^{(n)}$ is restricted to a 
compact set, $\xv\norm{X^{(n)}}^{p'}<\infty$
for any choice of $p'\ge1$.
Hence, 
$
  \sum_{n=1}^\infty 
  n^{ p'(1/p-1/2) }/\sqrt{\log\log n} <\infty
$
for any choice of $p' > (p-2)/2p$.
Furthermore,
$\norm{X^{(n)}} \le n^{1/2-1/p}$, so
$n^{-1/2}c_n\norm{X^{(n)}} \le 1/\sqrt{ n\log\log n }$, 
which converges to zero almost surely.
Therefore,  
$$
  \abs*{
    \prob{
      \frac{n^{1/p-1/2}}{\sqrt{\log\log n}}
      \sum_{i=1}^n X_i > t
      \,|\, \mathcal{S}_n\cap\mathcal{T}_n
    }
    - \rho\left[
      f_{n,X}(M) \ge t
    \right]
    } \rightarrow 0
$$
almost surely and in $L^1$
by Theorem~\ref{thm:asympInvariance}.  
Sub-Gaussian concentration on $SO(n)$
as a consequence of Theorem~5.17 from 
\cite{MECKES2019},
results in 
\begin{align*}
  &\rho\left[
    f_{n,X}(M) \ge t
  \right]
  \le
  \exp\left(
    -\frac{\kappa_0 t^2}{\norm{X^{(n)}}^2}
    n^{1-2/p}\log\log n
  \right)
  \le
  \ee^{
    -{\kappa_0 t^2}
    \log\log n
  }
  \text{ and}\\
  &\rho\left[
    c_n\iprod{\boldsymbol{1}_n}{(I-M)X^{(n)}} \ge t
  \right]
  \le
  \ee^{
    -{\kappa_0 t^2}
    \log\log n
  }.
\end{align*}
Noting that 
$
  \iprod{\boldsymbol{1}_n}{X^{(n)}} =
  \iprod{\boldsymbol{1}_n}{MX^{(n)}} +
  \iprod{\boldsymbol{1}_n}{(I-M)X^{(n)}},
$
the same argument as in the previous 
sub-section where $p=2$ can be applied to 
show the existence of some $K>0$ 
such that 
$$
  \limsup_{n\rightarrow\infty}
  \frac{
    \abs*{\sum_{i=1}^{n} X_i}
  }{Kn^{1/2-1/p}\sqrt{ \log\log n }}
  \le 1, ~~\text{a.s.}
$$

\subsection{Case $p<2$}

The case of a uniform point inside an 
$\ell_p^n$-ball with $1\le p<2$ poses some
additional problems to surmount.
Of note, the condition in Theorem~\ref{thm:asympInvariance}
that the normalizing sequence 
$c_n\rightarrow 0$, and furthermore that 
there exists a $p'\ge1$ such that 
$\sum_{i=1}^\infty c_n^{p'}<\infty$, fails to 
hold when the desired $c_n = n^{1/p-1/2}/\sqrt{\log\log n}$.
This suggests an alternative to Lemma~\ref{lem:asympInvarT}
when $\xv \norm{X^{(n)}}$ tends to zero as $n\rightarrow\infty$.
That is, noting that 
$$
    \abs{T_{n}(\pi_g X^{(n)}) - T_{n}(X^{(n)})}
    \le c_{n} \norm{\pi_gX^{(n)} - X^{(n)}} 
    \le 2 c_{n} \norm{X^{(n)}},
$$
allows for the condition $c_n\rightarrow0$ to 
be replaced by $c_n\xv\norm{X^{(n)}}\rightarrow0$
to conclude that 
$$
  \xv\abs{T_{n}(\pi_g X^{(n)}) - T_{n}(X^{(n)})}\rightarrow0.
$$
It similarly follows that $c_n\norm{X^{(n)}}\convas0$
implies that 
$\abs{T_{n}(\pi_g X^{(n)}) - T_{n}(X^{(n)})}\convas0$.
This can be applied to uniformly random points within
the $\ell^n_p$-ball for $1\le p<2$.

In this setting, a theorem of Schechtman 
and Zinn proposes concentration of the Euclidean 
norm on such an $\ell_p^n$-ball
\citep{SCHECHTMAN_ZINN_1990,SCHECHTMAN_ZINN_2000}.
That is, for $Y^{(n)}=(Y_1,\ldots,Y_n)$
a uniformly random point on the surface of an 
$\ell_p^n$-ball with $1\le p <2$, then there 
exist constants $T,c>0$ such that for all
$t> Tn^{1/2-1/p}$, 
$
  \prob{ \norm{Y} > t } \le \ee^{-cnt^p}.
$
This implies that 
\begin{align*}
  \xv\norm{X^{(n)}} 
  &\le \xv\norm{Y^{(n)}}
  = \int_{0}^{Tn^{1/2-1/p}} 
    \prob{ \norm{Y} > t } dt +
    \int_{Tn^{1/2-1/p}}^1 
    \prob{ \norm{Y} > t } dt\\
  &\le 
  Tn^{1/2-1/p} +
  \int_{Tn^{1/2-1/p}}^1 
   \ee^{-cnt^p}dt
\end{align*}
and the last integral, with 
$\delta_n = Tn^{1/2-1/p}$, is bounded as follows:
$$
  \int_{\delta_n}^1 \ee^{-cnt^p}dt
  \le 
  \int_{\delta_n}^1 \frac{t^{p-1}}{\delta_n^{p-1}} 
  \ee^{-cnt^p}dt
  \le  
  \frac{n^{1/2-p/2-1/p}}{T^{p-1}cp}.
$$
The result is
$$
  \xv\norm{ X^{(n)} } \le
  n^{1/2-1/p}\left(
    T + \frac{1}{T^{p-1}cpn^{p/2}}
  \right) = 
  O( n^{1/2-1/p} ).
$$

Thus, in the context of the problem at hand,
$c_n\xv\norm{X^{(n)}} = O( (\log\log n)^{-1/2} )$
and via Markov's inequality,
$c_n\norm{X^{(n)}}\convp0$, and 
via the L\'evy-It\^o-Nisio theorem
\citep[Theorem 2.4]{LEDOUXTALAGRAND1991},
$c_n\norm{X^{(n)}}\convas0$.
Thus, $T_n(X^{(n)})$ is once again asymptotically
invariant to rotations, 
and invoking the same argument as before 
shows the existence of some $K>0$ 
such that 
$$
  \limsup_{n\rightarrow\infty}
  \frac{
    \abs*{\sum_{i=1}^{n} X_i}
  }{Kn^{1/2-1/p}\sqrt{ \log\log n }}
  \le 1, ~~\text{a.s.}
$$

\section{One and two sample testing}
\label{sec:specificTests}

A different application of Theorems~\ref{thm:condProb}
and~\ref{thm:asympInvariance} is statistical 
hypothesis testing.  In the following sections,
classical examples of one and two sample testing
are considered.  For comparison with each case, 
quantitative versions of Theorem~\ref{thm:asympInvariance}
are proved assuming third moment conditions, 
which rely on results like the Berry-Esseen theorem.
The constants in the following theorems can likely
be improved with more careful arguments, but
this is not investigated in this work.  These
results are included for contrast with
the previously mentioned theorems and not assumed
to be state-of-the-art.

\subsection{One Sample Location Test}
\label{sec:OneTest}

A simple example of asymptotic invariance arises in the one sample
location test (see \cite{LEHMANN2006} examples 15.2.1, 15.2.4, and 15.2.5).
Given $X = (X_1,\ldots,X_n)$ \iid real valued random variables with 
mean $\mu$, the hypotheses under consideration are
$$
  H_0:~\mu=0 ~~\text{ and }~~ H_1:~\mu\ne0.
$$
Let $G = \{-1,+1\}^{n}$ be the group corresponding to 
the vertices of the 
$n$-dimensional hypercube.
For $\{\theta_i\}_{i=1}^n$ such that $\sum_{i=1}^n\theta_i^2=1$,
Let $T:\real^n\rightarrow\real$ be $T(x)=\sum_{i=1}^n\theta_ix_i$.
And lastly, let $\pi_gx = (\pm x_1, \ldots, \pm x_n)$.
To apply a randomization test based on the the group $G$,
the additional assumption that the univariate distribution 
of the $X_i$ is symmetric about the origin is required, i.e. 
$\prob{X_i\in B} = \prob{X_i\in -B}$ where 
$-B = \{x\in\real\,:\, -x\in B\}$.  In which case, 
condition C2 from above is satisfied, i.e. 
$T(\pi_gX) = T(X)$ in distribution, and thus the 
conclusions of
Theorem~\ref{thm:condProb} and Corollary~\ref{cor:condProb}
are valid.

Even in the absence of symmetry, Theorem~\ref{thm:asympInvariance}
outlines conditions under which significance thresholds 
based on random sign flips asymptotically achieve the 
desired test size.  In the above setting, 
the groups $G$ when paired with the Hamming metric 
form a normal L\'evy family \citep[Theorem~2.11]{LEDOUX2001}.
The function $T$
is Lipschitz with constant 
$c_n = \max_{i=1,\ldots,n}\abs{\theta_i}$.  Thus, 
the $\theta_i$ must be chosen such that for some $p\ge1$,
$\sum_{n=1}^\infty\max_{i=1,\ldots,n}\abs{\theta_i}<\infty$.
Moreover, the condition that 
$c_n\norm{X^{(n)}}/\sqrt{n}\convas0$ has further implications
on the moments of $\norm{X^{(n)}}$ and the choice of 
$\theta_i$.  By invoking Theorem 2.1.3. of \cite{STOUT1974},
almost sure convergence to 0 is achieved if for
some $p>0$,
$$
  \sum_{i=1}^\infty \frac{c_n^p\xv\norm{X^{(n)}}^p}{n^{p/2}}<\infty.
$$
In particular, as the $X_i$'s are iid, 
choosing $p = 2q$ results in a simplified convergence condition:
$$
  \sum_{n=1}^\infty \frac{c_n^{2q}}{n^q}\xv\left(
    \sum_{i=1}^n X_i^2
  \right)^q  \le
  \xv[X_1^{2q}] \sum_{n=1}^\infty {c_n^{2q}}
  < \infty. 
$$
Hence, if $X_1$ has a finite $p$th moment, then the
$\theta_i$ must be chosen so that 
$\max_{i=1,\ldots,n}\abs{\theta_i} = o(n^{1/p})$.
For the common choice of 
$\theta_1 = \ldots = \theta_n = 1/\sqrt{n}$, 
Theorem~\ref{thm:asympInvariance} requires 
the relatively mild condition that 
$\xv X_1^{2+\veps}$ for some $\veps>0$ in order
to achieve asymptotic equivalence.

In contrast, assuming a finite third moment
allows for a quantitative version of 
Theorem~\ref{thm:asympInvariance} to be proved
directly for this specific setting.  
Indeed, a simple application of
the Berry-Esseen theorem \citep[ Section XVI.5]{FELLER2008B} under the appropriate assumptions
demonstrates the asymptotic validity of the randomization
test.  More recent work on Berry-Esseen bounds can be used
to generalize beyond the iid setting and make use of other
``natural characteristics'' beyond merely the third absolute
moment
\citep{BOBKOV2014,BOBKOV2018}, but this is not 
explored in this work.

\begin{theorem}
  \label{thm:oneSample}
  Let $X = (X_1,\ldots,X_n)$ be iid mean zero random 
  variables with variance $\sigma^2$ and 
  $\xv\abs{X_i}^3 = \omega<\infty$.  Then,
  for $T$ as above and 
  for all $t\in\real$ and some universal constant $C>0$,
  $$
    \abs*{
      \prob{T(X)>t} - 
      \xv_X\rho\left(
        \{
        g\in G\,:\, 
        T(\pi_gX)>t
        \}
      \right)
    } 
    \le
    \frac{2C\omega}{\sigma^3}\sum_{i=1}^n\abs{\theta_i}^3.
  $$
  Furthermore, if $\sum_{i=1}^n\abs{\theta_i}^3\rightarrow 0$ 
  as $n\rightarrow\infty$, then the probabilities coincide
  asymptotically.  In particular, if $\theta_1=\ldots=\theta_n=n^{-1/2}$, then
  the right hand side is $O(n^{-1/2})$.
\end{theorem}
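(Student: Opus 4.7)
The plan is to rewrite the randomization average as an ordinary probability against independent Rademacher signs, and then apply the Berry--Esseen theorem for independent non-identically-distributed summands to both the original and the randomized statistics. Since normalized Haar measure on $G=\{-1,+1\}^n$ is the uniform product measure, I would identify $g=(\veps_1,\ldots,\veps_n)$ with iid Rademacher signs $\veps_i$ chosen independently of $X$, so that
$$
\xv_X\rho\bigl(\{g\in G\,:\, T(\pi_gX)>t\}\bigr)
= \prob{ \sum_{i=1}^n \theta_i \veps_i X_i > t }.
$$

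Both sums $T(X)=\sum_{i=1}^n \theta_i X_i$ and $\sum_{i=1}^n \theta_i \veps_i X_i$ then consist of independent mean-zero summands whose first three moments agree at the summand level: each has variance $\theta_i^2\sigma^2$ (summing to $\sigma^2$ because $\sum_i \theta_i^2=1$) and absolute third moment $|\theta_i|^3\omega$, using $|\veps_i X_i|^3=|X_i|^3$ almost surely. The non-iid (Lyapunov) form of the Berry--Esseen theorem \citep[Section XVI.5]{FELLER2008B} then furnishes a universal constant $C>0$ such that the cdfs of both $T(X)/\sigma$ and $\sum_i \theta_i \veps_i X_i/\sigma$ lie within $(C\omega/\sigma^3)\sum_i |\theta_i|^3$ of the standard Gaussian cdf in sup-norm; the corresponding bound on tail probabilities follows by complementation. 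A triangle inequality in sup-norm then yields the stated estimate, and for $\theta_i=n^{-1/2}$ the identity $\sum_i |\theta_i|^3 = n^{-1/2}$ delivers the $O(n^{-1/2})$ rate.

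There is no serious obstacle here: the conceptual step is simply recognizing the Haar integral over $\{-1,+1\}^n$ as expectation against iid Rademachers, after which the problem collapses to comparing two independent-sum central limit theorems with identical Lyapunov ratios. As acknowledged in the preamble to the theorem, the explicit constant inherited from Berry--Esseen is almost certainly suboptimal and is not optimized in this argument; sharper results in the spirit of \cite{BOBKOV2014,BOBKOV2018} could presumably tighten it, at the cost of a less transparent proof.
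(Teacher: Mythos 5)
Your proposal is correct and follows essentially the same route as the paper's proof: identify the Haar average over $\{-1,+1\}^n$ with Rademacher signs, note that the summands $\theta_i\veps_iX_i$ have the same first three absolute moments as $\theta_iX_i$, apply the non-iid Berry--Esseen theorem, and conclude by the triangle inequality through $\Phi^c(t/\sigma)$. The only cosmetic difference is that you apply Berry--Esseen once to the joint randomness of $(\veps,X)$, whereas the paper applies it conditionally on each fixed sign pattern and then averages; both give the identical bound.
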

\begin{proof}
  Let $\veps = (\veps_1,\ldots,\veps_n)\in\{-1,+1\}^n$.  
  Let $\Phi(t)$ be the cumulative distribution function 
  for a univariate standard normal random variable.
  In this setting, 
  \begin{align*}
    \xv_X\rho\left(
        \{
        g\in G\,:\, 
        T(\pi_gX)>t
        \}
      \right)
    &=
    2^{-n}\xv_x\left( \sum_{\veps\in\{-1,+1\}^n}
    \Indc{\sum_{i=1}^n\veps_i\theta_iX_i > t}\right)\\
    &=
    2^{-n}\sum_{\veps\in\{-1,+1\}^n}
    \prob{\sum_{i=1}^n\veps_i\theta_iX_i > t}
  \end{align*}
  Irregardless of $\veps_i$,
  $\xv \veps_i\theta_iX_i=0$, 
  $\xv (\veps_i\theta_iX_i)^2=\theta_i^2\sigma^2$, and
  $\xv \abs{\veps_i\theta_iX_i}^3=\abs{\theta_i}^3\omega$.  
  Consequently, the Berry-Esseen theorem
  \citep[Theorem 2, Section  XVI.5]{FELLER2008B}
  implies that there exists a universal constant $C>0$
  such that for any fixed choice of $\veps$
  $$
    \abs*{\prob{\frac{1}{\sigma}
    \sum_{i=1}^n\veps_i\theta_iX_i \le t} - \Phi(t)}
    \le
    \frac{C\omega}{\sigma^3}\sum_{i=1}^n\abs{\theta_i}^3.
  $$
  And thus, for $\Phi^c = 1- \Phi$,
  $$
    \abs*{
      \xv_X\rho\left(\{
        g\in G\,:\, 
        T(\pi_gX) > t
        \}
      \right)
      -
      \Phi^c(t/\sigma)
    } \le \frac{C\omega}{\sigma^3}\sum_{i=1}^n\abs{\theta_i}^3.
  $$
  Finally,
  \begin{multline*}
    \abs*{
      \prob{T(X)>t} - 
      \xv_X\rho\left(
        \{
        g\in G\,:\, 
        T(\pi_gX)>t
        \}
      \right)
    }\\
    \le
    \abs*{
     \prob{T(X) > t} - 
     \Phi^c(t/\sigma)
    } +
    \abs*{
      \xv_X\rho\left(
        \{
        g\in G\,:\, 
        T(\pi_gX) > t
        \}
      \right) -
      \Phi^c(t/\sigma)
    }
    \le 
    \frac{2C\omega}{\sigma^3}\sum_{i=1}^n\abs{\theta_i}^3.
  \end{multline*}
\end{proof}

\subsection{Two Sample t-Test}
\label{sec:TwoTest}

The two sample t-test stands as a prototypical hypothesis
test \cite[Section 11.3]{LEHMANN2006}.  
The goal is to determine if two populations have the
same mean.
Let $X = (X_1,\ldots,X_n,X_{n+1},\ldots,X_{n+m})$ be independent
Gaussian real valued random variables such that 
$$
  \xv X_i = \left\{
  \begin{array}{ll}
    \mu_1, & i\le n \\
    \mu_2, & i>n
  \end{array}
  \right.
  \text{ and }
  \var{X_i} = \left\{
  \begin{array}{ll}
    \sigma_1^2, & i\le n \\
    \sigma_2^2, & i>n
  \end{array}
  \right..
$$
The sample means are defined as 
$\bar{X}_1 = n^{-1}\sum_{i=1}^nX_i$ and 
$\bar{X}_2 = m^{-1}\sum_{i=n+1}^{n+m}X_i$.
The standard two sample t-test statistic under the 
assumption of homogeneous variances is to compute the test
statistic
$$
  T_\text{hom}(X) = 
  \frac{\bar{X}_1 - \bar{X}_2}{s_p\sqrt{n^{-1}+m^{-1}}}
  \text{ with }
  s_p^2 = \frac{(n-1)s_1^2 + (m-1)s_2^2}{n+m-2}
$$
where $s_p^2$ is the pooled estimator for the population
variance based on the sample variances $s_1^2$ and $s_2^2$
calculated for each population.
Under the null hypothesis that $\mu_1=\mu_2$, 
the test statistic, $T_\text{hom}(X)$, has a t-distribution 
with $n+m-2$ degrees of freedom.

If, however, the population variances are heterogeneous, then 
the above test statistic will not have a t-distribution under
the null.  This is the so-called \textit{Behrens–Fisher problem}.
A standard solution to this problem is to use 
Welch's t-test.  The test statistic in this case is
$$
  T_\text{het}(X) = \frac{
    \bar{X}_1 - \bar{X}_2
  }{
    \sqrt{
      {s_1^2}/{n} + {s_2^2}/{m}
    }
  }.
$$
The distribution under the null hypothesis of equal 
population means can be roughly approximated by a t-distribution
with degrees of freedom equal to 
$$
\frac{
    (s_1^2/n + s_2^2/m)^2
  }{
    {s_1^4}/[{n^2(n-1)}] +
    {s_2^4}/[{m^2(m-1)}]
  }.
$$
As, for example, when $n\rightarrow\infty$ with $m$ fixed,
the degrees of freedom tend towards $m-1$.

The standard two sample permutation test arises from 
the unnormalized difference of means
$T(X) = \bar{X}_1 - \bar{X}_2$ and
uniformly random permutations from $G = \mathbb{S}_{n+m}$,
the symmetric group on $n+m$ elements.  
Then, conditioned on the 
observed data $X=x$, one computes
$$
  \text{p-value} = \frac{\abs{\{\pi_g\,:\,T(\pi_g x) \ge T(x)\}}}{(n+m)!}.
$$
Of course, this is computationally infeasible.  Thus,
the typical solution is to sample some permutations 
$\{\pi_1,\ldots,\pi_r\}$
uniformly at random from $\mathbb{S}_{n+m}$ and compute
$$
  \text{p-value} \approx 
    \frac{1+\sum_{i=1}^r \Indc{
      T(\pi_i x) \ge T(x)
    }}{1+r}.
$$
The consequences of such sampling are discussed in 
\cite{HEMERIK2018}.  Otherwise, 
\cite{KASHLAK_KHINTCHINE2020} develops
analytic methods for 
computing exact permutation test p-values
for two-sample and $k$-sample tests for data
in Banach spaces
by making use of Khintchine-Kahane-type inequalities.

It can be seen that condition C2 holds in this setting
assuming that either the variances are homogeneous,
i.e. $\var{X_1} = \ldots = \var{X_{n+m}}$,
or the sample sizes are equal, i.e. $n=m$.
If both of these assumptions fail to hold,
Theorem~\ref{thm:asympInvariance} may still be 
applicable.
This conclusion is similar to the one-sample test 
setting of the previous section.
Indeed, the symmetric group with the Hamming metric
is a normal L\'evy family
\cite[Corrolary 4.3]{LEDOUX2001}. The function
$T$ is Lipschitz with constant 
$c_{n+m} = \min\{1/n,1/m\}$, which,
without loss of generality, taking 
$n\ge m := m_n$ as a function of $n$
gives $c_{n+m}=1/m_n$.  Furthermore, for
$p = 2q \ge 1$,
$$
  \left(\frac{c_{n+m}\norm{X^{(n+m)}}}{{\sqrt{n+m}}}\right)^{2q}
  =
  \frac{\left(\sum_{i=1}^{n+m}X_i^2\right)^q}{m_n^{2q}(n+m)^q}.
$$
Hence, Theorem 2.1.3 of \cite{STOUT1974} again implies 
almost sure convergence to zero of the above sequence if 
$$
  \xv[ X_1^{2q} ] \sum_{n=1}^\infty \frac{1}{m_n^{2q}} < \infty. 
$$
Thus, for proportional sample sizes $m_n = \lceil c_0n \rceil$
for $0<c_0<1$, convergence is achieved when 
$p = 2q = 1+\veps$ for some $\veps>0$ and 
$\xv[ X_1^{1+\veps} ]<\infty$.

For comparison with Theorem~\ref{thm:asympInvariance},
the following theorem quantitatively 
bounds how poorly a permutation 
test can perform when the assumption of exchangeability
is violated for Gaussian data.
The subsequent corollary passes through the Berry-Essen
bounds to achieve a quantitative version of 
Theorem~\ref{thm:asympInvariance} for non-exchangeable
random variables with finite third absolute moment.
Of note, if the sample sizes are 
proportional as discussed above, e.g. 
$m_n = \lceil c_0n \rceil$, then
the next Theorem concludes that 
$$
  \abs*{
    \prob{ T(X) > t } -
    \xv_X\rho( \{g\in G\,:\,T(\pi_gX) > t\} )
  } =
  O\left(
    \sqrt{\frac{\log n }{n}}
  \right).
$$
Though, it is worth future consideration as to 
whether or not the $\log n$ term is necessary.

\begin{theorem}
  \label{thm:twoSample}
  Let $X_1,\ldots,X_n$ be iid univariate random variables
  with mean $\eta$, variance $\sigma_1^2$, and finite 
  absolute third moment.  Similarly, let 
  $X_{n+1},\ldots,X_{n+m}$ be iid and independent of the 
  first collection with mean $\eta$, variance $\sigma_2^2$,
  and finite absolute third moment.
  Let $T(X) = n^{-1}\sum_{i=1}^nX_i - m^{-1}\sum_{i=n+1}^{n+m}X_i$.
  Then, for any $t\in \real$ 
  \begin{multline*}
    \abs*{
      \prob{ T(X) > t } -
      \xv_X\rho( \{g\in G\,:\,T(\pi_gX) > t\} )
    } \le \\
    \sqrt{
    2\left(
      \frac{1}{m} - \frac{1}{n}
    \right)(  
      \sigma_1^2 - \sigma_2^2
    )\log\left(
         \frac{nm/\sqrt{n^2-m^2}}{\sqrt{2\pi(  
      \sigma_1^2 - \sigma_2^2
    )}}+1
       \right)
  }
  +
  O\left( \sqrt{\frac{1}{n} + \frac{1}{m}} \right).
  \end{multline*}
\end{theorem}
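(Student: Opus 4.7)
The plan is to compare both sides of the inequality to Gaussian proxies via Berry--Esseen and then bound the difference of the resulting Gaussian tails directly. First, by Fubini, $\xv_X\rho(\{g\in G\,:\,T(\pi_gX) > t\}) = \prob{T(\pi_{g^*}X) > t}$, where $g^*$ is a uniformly random element of $G = \mathbb{S}_{n+m}$ independent of $X$. A direct second-moment computation gives $\var{T(X)} = v_1 := \sigma_1^2/n + \sigma_2^2/m$ while $\var{T(\pi_{g^*}X)} = v_2 := \sigma_1^2/m + \sigma_2^2/n$, since the coefficients $W_i$ in the linear form $T(\pi_{g^*}X) = \sum_i W_i X_i$ are exchangeable with $\xv[W_i] = 0$, $\xv[W_i^2] = 1/(nm)$, and $\sum_i W_i = 0$. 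The signed difference $v_1 - v_2 = (1/n - 1/m)(\sigma_1^2 - \sigma_2^2)$ is precisely the quantity appearing under the square root in the theorem.

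I would next apply a Berry--Esseen bound to each side. For $T(X)$, the finite absolute third moment hypothesis and the sum-of-independents structure directly yield $\sup_t|\prob{T(X) > t} - \Phi^c(t/\sqrt{v_1})| = O(\sqrt{1/n + 1/m})$. For $T(\pi_{g^*}X)$ the argument is more delicate because the conditional variance $v(g) := \var{T(\pi_g X)}$ varies with $g$; I would either condition on $X$ and apply Hoeffding's combinatorial central limit theorem, or condition on $g$ and apply classical Berry--Esseen to the linear combination $\sum_i c_{i,g}X_i$, then use concentration of the hypergeometric quantity $v(g^*)$ around $v_2$ to replace $\sqrt{v(g^*)}$ by $\sqrt{v_2}$ at the cost of an additional $O(\sqrt{1/n + 1/m})$ term.

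The leading term of the stated bound then comes from the pure Gaussian-comparison $\sup_t|\Phi^c(t/\sqrt{v_1}) - \Phi^c(t/\sqrt{v_2})|$. I would split on $|t| \le \tau$ versus $|t| > \tau$: on the first region, the $1/\sqrt{2\pi}$-Lipschitz constant of $\Phi$ combined with $|\sqrt{v_1} - \sqrt{v_2}|^2 \le |v_1-v_2|$ yields the linear bound $\tau\sqrt{|v_1-v_2|/(2\pi v_1 v_2)}$; on the second region, Mills' ratio (or the Chernoff bound $\Phi^c(x) \le e^{-x^2/2}$) gives an exponential tail bound of order $\exp(-\tau^2/(2\max(v_1,v_2)))$. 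Balancing the linear and exponential pieces at $\tau^2 \asymp v_{\max}\log(v_1 v_2/|v_1 - v_2|)$ and substituting the explicit expressions for $v_1$, $v_2$, and $v_1-v_2$ in terms of $n,m,\sigma_1,\sigma_2$ produces the explicit logarithmic factor $\log(nm/\sqrt{n^2 - m^2}/\sqrt{2\pi(\sigma_1^2-\sigma_2^2)} + 1)$. Combining the three error pieces by the triangle inequality absorbs the two Berry--Esseen terms into the final $O(\sqrt{1/n + 1/m})$ summand.

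The main technical obstacle is the permuted-statistic Berry--Esseen in step two: jointly controlling the combinatorial CLT error and the CDF shift induced by the random variance $v(g^*)$ requires re-running the Lipschitz/tail split a second time for the hypergeometric fluctuations of $v(g^*)$. The logarithmic factor in the Gaussian-comparison step is the price of making the splitting-threshold explicit, and could in principle be sharpened via Pinsker's inequality (which gives an $O(|v_1-v_2|)$ bound without the square root or $\log$), but the stated form has the clean closed form the theorem asserts and suffices for the intended asymptotic conclusion.
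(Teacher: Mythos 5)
Your overall architecture (Berry--Esseen on each side plus a Gaussian-versus-Gaussian comparison in the middle) matches the paper's, and your variance computations $v_1=\sigma_1^2/n+\sigma_2^2/m$ and $v_2=\sigma_1^2/m+\sigma_2^2/n$ are correct. The gap is in the middle step. You propose to bound the pointwise (Kolmogorov-type) difference $\sup_t|\Phi^c(t/\sqrt{v_1})-\Phi^c(t/\sqrt{v_2})|$ by a Lipschitz/tail split and claim the balancing yields the stated factor. It does not: substituting your threshold $\tau^2\asymp v_{\max}\log(\cdot)$ into your own linear piece $\tau\sqrt{|v_1-v_2|/(2\pi v_1v_2)}$ gives $\sqrt{|v_1-v_2|\log(\cdot)/(2\pi v_{\min})}$, which carries an extra factor $v_{\min}^{-1/2}=\Theta(\sqrt{n})$ relative to the theorem's $\sqrt{2|v_1-v_2|\log(\cdot)}$. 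This is not a fixable constant. When $m=cn$ with $c<1$ and $\sigma_1^2\ne\sigma_2^2$, the ratio $v_1/v_2$ tends to a constant different from $1$, so $\sup_s|\Phi^c(s)-\Phi^c(s\sqrt{v_1/v_2})|$ is bounded \emph{below} by a positive constant; no choice of splitting threshold can make a Kolmogorov comparison of the two Gaussian proxies vanish, let alone be $O(\sqrt{\log n/n})$. The quantity that does scale like $\sqrt{|v_1-v_2|\log(1/|v_1-v_2|)}$ is a coupling-based (L\'evy--Prokhorov / Ky Fan) distance, which is governed by the \emph{absolute} rather than the \emph{relative} difference of the variances.

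That is the route the paper takes. Lemma~\ref{lem:levyProk} couples the two Gaussians maximally (choosing $\mathrm{cov}(X,Y)=\sigma_2^2$ so that $X-Y$ has variance $\sigma_1^2-\sigma_2^2$) and solves the Ky Fan balance equation $\mathbb{P}(|X-Y|>\veps)\le\veps$, which is where the Lambert-$W$/logarithmic factor originates; Lemma~\ref{lem:twoSamp} then represents the permuted Gaussian statistic exactly as a hypergeometric mixture $\nu=\sum_j w_j\nu_j$ and applies the coupling lemma componentwise followed by Jensen's inequality, rather than your proposed concentration-of-$v(g^*)$ reduction to a single Gaussian with variance $v_2$ (which would additionally require the combinatorial CLT step you yourself flag as delicate). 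To repair your argument you would have to replace the Kolmogorov comparison by this coupling comparison and then confront the separate question of how an L\'evy--Prokhorov bound controls the fixed-$t$ tail difference appearing in the statement; as written, the central step of your proposal fails and the proof does not go through.
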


\begin{lemma}
  \label{lem:levyProk}
  Let $\mu$ and $\nu$ be centred Gaussian measures on $\real$
  with variances $\sigma_1^2$ and $\sigma_2^2$, respectively, 
  with $\sigma_1^2 \ge \sigma_2^2 >0$.
  Then, denoting the L\'evy-Prokhorov metric by 
  $d_\mathrm{LP}$,
  $$
    d_\mathrm{LP}(\mu,\nu) \le 
     \sqrt{
       2(\sigma_1^2-\sigma_2^2)\log\left(
         \frac{1}{\sqrt{2\pi(\sigma_1^2-\sigma_2^2)}}+1
       \right)
     }.
  $$
\end{lemma}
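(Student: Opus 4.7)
The plan is to reduce the problem to a scalar Gaussian tail estimate via Strassen's coupling characterisation of the L\'evy--Prokhorov metric: for any $\epsilon>0$, one has $d_\mathrm{LP}(\mu,\nu)\le\epsilon$ whenever there exists a joint law of $(X,Y)$ with marginals $\mu$ and $\nu$ satisfying $\prob{|X-Y|>\epsilon}\le\epsilon$. Setting $\tau^2 := \sigma_1^2-\sigma_2^2 \ge 0$, my coupling of choice is $Y\sim\nu$ together with $X := Y+Z$ for $Z\sim N(0,\tau^2)$ independent of $Y$; Gaussian stability gives $X\sim\mu$, and $|X-Y|=|Z|$. Thus the task reduces to producing the smallest $\epsilon>0$ satisfying $\prob{|Z|>\epsilon}\le\epsilon$.

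For the Gaussian tail, I would invoke the Mill's-ratio bound
\[
\prob{|Z|>\epsilon}\le\frac{2\tau}{\sqrt{2\pi}\,\epsilon}\exp\!\left(-\frac{\epsilon^2}{2\tau^2}\right),
\]
obtained by bounding $z^2\ge\epsilon z$ for $z\ge\epsilon$ and integrating the resulting exponential majorant. The candidate $\epsilon^\ast := \tau\sqrt{2\log(1+1/(\sqrt{2\pi}\,\tau))}$ is engineered precisely so that $\exp(-(\epsilon^\ast)^2/(2\tau^2)) = (1+1/(\sqrt{2\pi}\,\tau))^{-1}$. Substituting and cancelling, the required inequality $\prob{|Z|>\epsilon^\ast}\le\epsilon^\ast$ will reduce to the elementary inequality $u/(u+1)\le\log(1+u)$ for $u := 1/(\sqrt{2\pi}\,\tau)\ge 0$, which follows from $\log(1-x)\le -x$ applied at $x=u/(1+u)$ (or equivalently, from concavity of $\log$ at $1$). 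The degenerate case $\sigma_1=\sigma_2$ can be handled by continuity in $\tau$, since both sides of the claimed bound vanish.

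The main subtlety is selecting a tail bound of just the right shape. A naive sub-Gaussian estimate $\prob{|Z|>\epsilon}\le\exp(-\epsilon^2/(2\tau^2))$, plugged into the same $\epsilon^\ast$, would instead reduce to $\pi u^2/(u+1)^2\le\log(1+u)$, which already fails near $u=1$. Hence the polynomial $\tau/\epsilon$ prefactor in the Mill's-ratio bound is essential to close the algebra at precisely this threshold, and picking the threshold so that the exponential factor equals $(1+u)^{-1}$ is what lets the final inequality collapse to the standard estimate for $\log(1+u)$.
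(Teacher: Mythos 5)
Your proof is correct and follows essentially the same route as the paper's: the Strassen/Ky Fan coupling reduction, the additive coupling $X=Y+Z$ (identical to the paper's choice $\mathrm{cov}(X,Y)=\sigma_2^2$), the Mills-ratio tail bound, and a final logarithmic estimate. The only cosmetic difference is that you verify the candidate $\epsilon^\ast$ directly via $u/(1+u)\le\log(1+u)$, whereas the paper solves the threshold equation exactly with Lambert's $W$ and then applies the equivalent bound $W(u)\le\log(1+u)$.
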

\begin{proof}
   The L\'evy-Prokhorov metric is defined as 
   \begin{align*}
     d_\mathrm{LP}(\mu,\nu) 
     &= \inf\{
       \veps>0 \,:\, \mu(A) \le \nu(A_\veps) + \veps
       \text{ and }  \nu(A) \le \mu(A_\veps) + \veps
     \} \\
     &= \inf\{
       d_\mathrm{KF}(X,Y) \,:\, X\dist\mu, Y\dist\nu
     \}
   \end{align*}
   where 
   $d_\mathrm{KF}(X,Y) = \inf\{  
     \veps>0 \,:\, \prob{ \abs{X-Y}>\veps } < \veps
   \}$ is the Ky Fan metric for random variables $X$ and $Y$.

   Considering $(X,Y)$ bivariate Gaussian, the variance of 
   $X-Y$ is minimized when $\cov{X}{Y} = \sigma_2^2$ and
   then $\var{ X-Y } = \sigma_1^2-\sigma_2^2$.  
   Thus,
   \begin{align*}
     \prob{ \abs{X-Y}>\veps } 
     &= 2\int_\veps^\infty 
     \frac{1}{\sqrt{2\pi(\sigma_1^2-\sigma_2^2)}}
     \exp\left( \frac{-t^2}{2(\sigma_1^2-\sigma_2^2)} \right)dt\\
     &\le 
     \frac{2}{\veps}\sqrt{\frac{\sigma_1^2-\sigma_2^2}{2\pi}}
     \exp\left( \frac{-\veps^2}{2(\sigma_1^2-\sigma_2^2)} \right),
   \end{align*}
   and thus,
   $$
     d_\mathrm{LP}(\mu,\nu) \le \veps
     \text{ such that } 
     \exp\left( \frac{-\veps^2}{2(\sigma_1^2-\sigma_2^2)} \right)
     - \frac{\veps^2}{2}
     \sqrt{\frac{2\pi}{\sigma_1^2-\sigma_2^2}} = 0.
   $$
   The solution to this equation is 
   $$
     \veps = \sqrt{
       2(\sigma_1^2-\sigma_2^2)W\left(
         1/\sqrt{2\pi(\sigma_1^2-\sigma_2^2)}
       \right)
     }
     \le
     \sqrt{
       2(\sigma_1^2-\sigma_2^2)\log\left(
         \frac{1}{\sqrt{2\pi(\sigma_1^2-\sigma_2^2)}}+1
       \right)
     }
   $$
   where $W$ is Lambert's W function.
\end{proof}

\begin{lemma}
  \label{lem:twoSamp}
  Let $X = (X_1,\ldots,X_n,X_{n+1},\ldots,X_{n+m})$ be independent
  Gaussian real valued random variables such that 
  $$
  \xv X_i =
  \eta,~ \forall i
  \text{ and }
  \var{X_i} = \left\{
  \begin{array}{ll}
    \sigma_1^2, & i\le n \\
    \sigma_2^2, & i>n
  \end{array}
  \right.
  $$
  assuming without loss of generality that $n> m$.
  Let $T(X) = n^{-1}\sum_{i=1}^nX_i - m^{-1}\sum_{i=n+1}^{n+m}X_i$
  and $G=\mathbb{S}_{n+m}$ be the symmetric group on 
  $n+m$ elements.  Then,
  \begin{multline*}
    \abs*{
      \prob{ T(X) > t } -
      \xv_X\rho( \{g\in G\,:\,T(\pi_gX) > t\} )
    }\\ \le
    \sqrt{
    2\left(
      \frac{1}{m} - \frac{1}{n}
    \right)(  
      \sigma_1^2 - \sigma_2^2
    )\log\left(
         \frac{nm/\sqrt{n^2-m^2}}{\sqrt{2\pi(  
      \sigma_1^2 - \sigma_2^2
    )}}+1
       \right)
  }.
  \end{multline*}
\end{lemma}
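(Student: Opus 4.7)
The plan is to realize $T(X)$ as an exact Gaussian and $T(\pi_g X)$ as a (hypergeometric) mixture of Gaussians conditional on a random subset, construct an explicit coupling, and then adapt the Ky Fan--L\'evy--Prokhorov calculation of Lemma~\ref{lem:levyProk} to the mixture. The resulting L\'evy--Prokhorov bound dominates the difference in tail probabilities by the standard definition applied to a half-line.

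First I would identify the distributions. Since $X$ is Gaussian with covariance $\mathrm{diag}(\sigma_1^2 I_n, \sigma_2^2 I_m)$ and $T(x) = a^Tx$ is linear with $a^T\mathbf{1}=0$, $T(X) \sim \distNormal{0}{V_1}$ with $V_1 = \sigma_1^2/n + \sigma_2^2/m$ exactly. Conditional on the uniformly random size-$n$ subset $S = g^{-1}(\{1,\ldots,n\})$ and writing $j = |S \cap \{n{+}1,\ldots,n{+}m\}|$ for the number of cross-over indices, $T(\pi_g X) \mid S$ is again a linear combination of independent Gaussians and so $\sim \distNormal{0}{v(j)}$, with a short calculation yielding
$$
v(j) - V_1 = j\left(\frac{1}{m^2} - \frac{1}{n^2}\right)(\sigma_1^2 - \sigma_2^2) \ge 0.
$$
Here $j$ is hypergeometric with $\xv j = nm/(n+m)$, so that $\xv[v(j) - V_1] = (1/m - 1/n)(\sigma_1^2 - \sigma_2^2)$, exactly the quantity outside the square root in the target bound.

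Second I would couple the two statistics. Draw $j$ hypergeometrically and, independently of $j$, draw $\tilde Z_0, \tilde Z_1 \sim \distNormal{0}{1}$; set $T(X) := \sqrt{V_1}\,\tilde Z_0$ and $T(\pi_g X) := T(X) + \sqrt{v(j) - V_1}\,\tilde Z_1$. This realizes the correct marginal and conditional laws and makes $T(\pi_g X) - T(X) = \sqrt{v(j) - V_1}\,\tilde Z_1$ centered Gaussian given $j$. Applying the Mills-ratio bound $2\Phi^c(x) \le \sqrt{2/\pi}\,x^{-1}\exp(-x^2/2)$ conditionally on $j$ gives
$$
\prob{|T(\pi_g X) - T(X)| > \veps \mid j}
\le \sqrt{\frac{2(v(j) - V_1)}{\pi}}\,\veps^{-1}\exp\!\left(-\frac{\veps^2}{2(v(j) - V_1)}\right),
$$
and averaging over $j$ while exploiting its hypergeometric concentration (standard deviation of order $nm/(n+m)^{3/2}$) around its mean produces an aggregate inequality of the form $C_{n,m}\,\veps^{-1}\exp(-\veps^2/(2(V_2-V_1))) \le \veps$, with $V_2 - V_1 = (1/m - 1/n)(\sigma_1^2 - \sigma_2^2)$ and a combinatorial constant $C_{n,m}$ containing a factor $\sqrt{\xv j} = \sqrt{nm/(n+m)}$ that, after regrouping with $\sqrt{V_2 - V_1}$, produces exactly the expression $nm/\sqrt{n^2 - m^2}$ appearing in the log-argument. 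Solving this transcendental inequality for $\veps$ via Lambert's $W$ and the bound $W(x) \le \log(x + 1)$ mirrors the final step of the proof of Lemma~\ref{lem:levyProk}.

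The main technical obstacle is the second step: since the function $j \mapsto \sqrt{j}\exp(-c/j)$ is neither globally convex nor concave on the hypergeometric's support, Jensen's inequality does not directly apply. Obtaining the precise log-argument $nm/\sqrt{n^2-m^2}$ (and not a cruder constant from the worst case $j = m$, which would only yield an additional factor of $(n+m)/n$ outside the square root) requires a careful use of hypergeometric concentration, essentially replacing $j$ inside $\sqrt{j\alpha}$ by its expectation while treating the deviation in the exponential separately.
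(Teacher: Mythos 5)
Your overall architecture is the same as the paper's: both identify $T(X)$ as an exact centred Gaussian with variance $V_1=\sigma_1^2/n+\sigma_2^2/m$, represent the permuted statistic as a hypergeometric mixture of centred Gaussians with component variances $V_1 + j(m^{-2}-n^{-2})(\sigma_1^2-\sigma_2^2)$, and both ultimately rest on the Ky Fan/L\'evy--Prokhorov calculation of Lemma~\ref{lem:levyProk} with the Lambert-$W$/logarithm bound. The one place you genuinely depart from the paper is the order of operations in handling the mixture, and that is exactly where your argument has a gap that you yourself flag but do not close. You average the conditional Gaussian tail bound over $j$ first and then try to solve a single transcendental inequality for $\veps$; since $s\mapsto\sqrt{s}\,e^{-c/s}$ is increasing and not concave on the relevant range, Jensen does not let you replace $v(j)-V_1$ by its expectation, and the appeal to ``hypergeometric concentration'' is not carried out. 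This is not a cosmetic omission: it is precisely the step that determines whether the final bound carries $\xv j = nm/(n+m)$ (giving $1/m-1/n$) or the worst case $j=m$ (giving the extra factor $(n+m)/n$ you mention), so as written the proposal only establishes the weaker worst-case bound.

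The paper avoids this difficulty by reversing the order: it applies Lemma~\ref{lem:levyProk} separately to each mixture component $\nu_j$, obtaining
$$
d_\mathrm{LP}(\mu,\nu_j)\le\sqrt{\,2j\left(\tfrac{n^2-m^2}{n^2m^2}\right)(\sigma_1^2-\sigma_2^2)\log\left(\tfrac{nm/\sqrt{n^2-m^2}}{\sqrt{2\pi(\sigma_1^2-\sigma_2^2)}}+1\right)}\,,
$$
where the logarithm has already been bounded uniformly in $j$ using $j\ge1$, and only then averages over the hypergeometric weights. At that point the $j$-dependence sits entirely inside a square root, so concavity of $\sqrt{\cdot}$ and $\xv j = nm/(n+m)$ give the stated constant immediately. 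If you restructure your second step this way --- solve for $\veps_j$ per component, then average the solutions --- your coupling argument goes through and reproduces the lemma; as it stands, the averaging step is the missing idea.
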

\begin{proof}[Proof of Lemma \ref{lem:twoSamp}]
  The Gaussian measure induced by $T(X)$ has zero mean
  and variance $\sigma_1^2/n + \sigma_2^2/m$ and will 
  be denoted by $\mu$.  In turn, 
  the Gaussian measure averaged over the group $G$ is a 
  weighted mixture of $m$ Gaussian distributions
  and will be denoted $\nu = \sum_{j=0}^m w_j\nu_j$.
  All component measures have 
  zero mean and a variance
  of 
  $$
    \sigma_1^2\left(\frac{n-j}{n^2}+\frac{j}{m^2}\right) +
    \sigma_2^2\left(\frac{m-j}{m^2}+\frac{j}{n^2}\right)
    =
    \frac{\sigma_1^2}{n} + \frac{\sigma_2^2}{m} +
    j\left(
      {m^{-2}} - {n^{-2}}
    \right)(  
      \sigma_1^2 - \sigma_2^2
    )
  $$
  with hypergeometric weights 
  $
    w_j = {n \choose j}
    {m \choose m-j}
    {n+m \choose m}^{-1}.
  $
  From Lemma~\ref{lem:levyProk}, the L\'evy-Prokhorov
  metric between $\mu$ and $\nu_j$ for $j\ne0$ is
    \begin{align*}
    d_\mathrm{LP}(\mu,\nu_j)
    &\le 
     \sqrt{
       2j\left(
      {m^{-2}} - {n^{-2}}
    \right)(  
      \sigma_1^2 - \sigma_2^2
    )\log\left(
         \frac{1}{\sqrt{2\pi j(
      {m^{-2}} - {n^{-2}}
    )(  
      \sigma_1^2 - \sigma_2^2
    )}}+1
       \right)
     }\\
     &\le 
     \sqrt{
       2j\left(
      \frac{n^2-m^2}{n^2m^2}
    \right)(  
      \sigma_1^2 - \sigma_2^2
    )\log\left(
         \frac{nm/\sqrt{n^2-m^2}}{\sqrt{2\pi(  
      \sigma_1^2 - \sigma_2^2
    )}}+1
       \right)
     }
  \end{align*}
  Applying the triangle inequality and 
  Jensen's inequality finishes the proof:
\begin{align*}
    d_\mathrm{LP}(\mu,\nu) &\le
  \sum_{j=0}^m
    \frac{
      {n \choose j}{m \choose m-j}
    }{ {n+m \choose m} }
    \sqrt{ 
       2j\left(
      \frac{n^2-m^2}{n^2m^2}
    \right)(  
      \sigma_1^2 - \sigma_2^2
    )\log\left(
         \frac{nm/\sqrt{n^2-m^2}}{\sqrt{2\pi(  
      \sigma_1^2 - \sigma_2^2
    )}}+1
       \right)
     }\\
  &\le
  \sqrt{
    \left(\frac{nm}{n+m}\right)
    2\left(
      \frac{n^2-m^2}{n^2m^2}
    \right)(  
      \sigma_1^2 - \sigma_2^2
    )\log\left(
         \frac{nm/\sqrt{n^2-m^2}}{\sqrt{2\pi(  
      \sigma_1^2 - \sigma_2^2
    )}}+1
       \right)
  }\\
  &\le
  \sqrt{
    2\left(
      \frac{n-m}{nm}
    \right)(  
      \sigma_1^2 - \sigma_2^2
    )\log\left(
         \frac{nm/\sqrt{n^2-m^2}}{\sqrt{2\pi(  
      \sigma_1^2 - \sigma_2^2
    )}}+1
       \right)
  }.
  \end{align*}
\end{proof}

\begin{proof}[Proof of Theorem \ref{thm:twoSample}]
 The function $T$ can be written equivalently as
 $$
   T(X) = \frac{1}{n}\sum_{i=1}^n(X_i-\eta) -
   \frac{1}{m}\sum_{i=n+1}^{n+m}(X_i-\eta).
 $$
 As a result, for $i\le n$ and $i>n$, respectively, 
 \begin{align*}
   \xv\left[
     \left(\frac{X_i-\eta}{n}\right)^2
   \right] &= \frac{\sigma^2_1}{n^2} &
   \xv\left[
     \left(\frac{X_i-\eta}{m}\right)^2
   \right] &= \frac{\sigma^2_1}{m^2} \\
   \xv\left[
     \abs*{\frac{X_i-\eta}{n}}^3
   \right] &= \frac{\upsilon_1}{n^3} &
   \xv\left[
     \abs*{\frac{X_i-\eta}{m}}^3
   \right] &= \frac{\upsilon_2}{m^3}
 \end{align*}
 Thus, the Berry-Esseen Theorem \citep[Theorem 2, Section XVI.5]{FELLER2008B}
 states that for some universal constant $C>0$, 
 \begin{multline*}
   \abs*{
     \prob{
       \left[\frac{\sigma_1^2}{m}+\frac{\sigma_2^2}{n}\right]^{-1/2}T(X)
       \ge t
     } - \Phi(t)
   }\\ \le 
   C \frac{ 
     \frac{\upsilon_1}{n^2} + \frac{\upsilon_2}{m^2}
   }{
     \left(
     \frac{\sigma_1^2}{n} + \frac{\sigma_2^2}{m}
     \right)^{3/2}
   }
   = 
   C \frac{ 
     m^2{\upsilon_1} + n^2{\upsilon_2}
   }{
     \left(
       m{\sigma_1^2} + n{\sigma_2^2}
     \right)^{2}
   }\sqrt{
     \frac{\sigma_1^2}{n} + \frac{\sigma_2^2}{m}
    } = O\left(
      \sqrt{
        \frac{1}{n} + \frac{1}{m}
      }
    \right).
 \end{multline*}
 Thus, 
 let $Z \in\real^{n+m}$ be multivariate Gaussian with mean
 zero and covariance matrix with zero off-diagonal entries
 and main diagonal of $\sigma_1^2$ for the first $n$ entries
 and $\sigma_2^2$ for the final $m$ entries.
 \begin{align*}
   &\abs*{
      \prob{ T(X) > t } -
      \xv_X\rho( \{g\in G\,:\,T(\pi_gX) > t\} )
    }\\ 
    &~~~~~~\le
    \abs*{
      \prob{ T(X) > t } -
      \prob{ T(Z) > t }
    }\\ &~~~~~~+
    \abs*{
      \prob{ T(Z) > t } - 
      \xv_Z\rho( \{g\in G\,:\,T(\pi_gZ) > t\} )
    }
    \\ &~~~~~~+
    \abs*{
      \xv_Z\rho( \{g\in G\,:\,T(\pi_gZ) > t\} ) -
      \xv_X\rho( \{g\in G\,:\,T(\pi_gX) > t\} )
    }
 \end{align*}
 The first line is $O(n^{-1/2}+m^{-1/2})$ by the 
 Berry-Esseen theorem as discussed above.
 The second line is bounded by Lemma~\ref{lem:twoSamp}.
 The third line is also $O(n^{-1/2}+m^{-1/2})$ by the 
 Berry-Esseen theorem.  Indeed, the measure 
 $\xv_Z\rho( \{g\in G\,:\,T(\pi_gZ) > t\} )$ is 
 a weighted mixture of Gaussians as discussed in 
 the proof of Lemma~\ref{lem:twoSamp}.
\end{proof}

For comparison with Theorem~\ref{thm:twoSample}, 
the following result provides a quantitative bound
on the total variation distance between a Gaussian
average and the randomly permuted average.  Of note,
the inclusion of the term $\sqrt{(n-m)/(n+m)}$ 
necessitates a stronger asymptotic agreement between
$n$ and $m$; i.e. $m = cn$ is no longer sufficient.
Such a result suggests that a total variation 
version of Theorem~\ref{thm:asympInvariance} 
may be of future interest. 

\begin{theorem}
  \label{thm:tvTwoGauss}
  Let $X = (X_1,\ldots,X_n,X_{n+1},\ldots,X_{n+m})$ be independent
  Gaussian real valued random variables such that 
  $$
  \xv X_i =
  \eta,~ \forall i
  \text{ and }
  \var{X_i} = \left\{
  \begin{array}{ll}
    \sigma_1^2, & i\le n \\
    \sigma_2^2, & i>n
  \end{array}
  \right.
  $$
  assuming without loss of generality that $n\ge m$.
  Let $T(X) = n^{-1}\sum_{i=1}^nX_i - m^{-1}\sum_{i=n+1}^{n+m}X_i$
  and $G=\mathbb{S}_{n+m}$ be the symmetric group on 
  $n+m$ elements.  For measures $\mu$ and $\nu$ on 
  $(\real,\mathcal{B})$ defined by
  $\mu(B) = \prob{ T(X)\in B }$ and 
  $\nu(B) = \xv_X\rho( \{g\in G\,:\,T(\pi_gX)\in B\} )$ for 
  any $B\in\mathcal{B}$, 
  $$
    \norm{\mu-\nu}_\mathrm{TV}
    \le
  \frac{1}{2}
  \left(
    \frac{n-m}{n+m}
  \right)^{1/2}
  \abs{\sigma_2^2 - \sigma_1^2}^{1/2}
  \max\left\{
  \sqrt{
    {\frac{  
       1
      }{
      \sigma_2^2 + \sigma_1^2
      }
    }
  },
  \sqrt{
    {
      \frac{  
      1
      }{
      2\sigma_2^2
      }
    }
  }
  \right\}.
  $$
\end{theorem}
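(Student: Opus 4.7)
The plan is to exploit the explicit mixture representation already derived in the proof of Lemma~\ref{lem:twoSamp}: the permutation-averaged law $\nu$ is a hypergeometric mixture $\nu = \sum_{j=0}^{m} w_j \nu_j$ of centred Gaussians $\nu_j = \distNormal{0}{\tau_j^2}$ with $\tau_j^2 = \tau_\mu^2 + j(m^{-2}-n^{-2})(\sigma_1^2-\sigma_2^2)$, $\tau_\mu^2 = \sigma_1^2/n + \sigma_2^2/m$, and weights $w_j = \binom{n}{j}\binom{m}{m-j}/\binom{n+m}{m}$, while $\mu = \distNormal{0}{\tau_\mu^2}$. Convexity of the total variation norm immediately yields $\norm{\mu-\nu}_\mathrm{TV} \le \sum_{j=0}^{m} w_j \norm{\mu - \nu_j}_\mathrm{TV}$, reducing the problem to bounding TV between two centred Gaussians and then averaging the resulting estimate against the hypergeometric weights.

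For each $j$ I would estimate $\norm{\mu-\nu_j}_\mathrm{TV}$ via the closed-form Hellinger identity $H^2(\distNormal{0}{a^2},\distNormal{0}{b^2}) = 1 - \sqrt{2ab/(a^2+b^2)}$ together with $\norm{P-Q}_\mathrm{TV}^2 \le 2 H^2(P,Q)$. Using $1-\sqrt{r} \le 1-r$ and the factorisation $(\tau_\mu-\tau_j)^2 = (\tau_\mu^2-\tau_j^2)^2/(\tau_\mu+\tau_j)^2$, a bound of the form $\norm{\mu-\nu_j}_\mathrm{TV}^2 \le C(\tau_\mu,\tau_j)\,|\tau_\mu^2-\tau_j^2|$ drops out, linear in $j$. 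The precise form of $C(\tau_\mu,\tau_j)$ depends on whether $\tau_j^2 > \tau_\mu^2$ or $\tau_j^2 < \tau_\mu^2$; this dichotomy, corresponding to the sign of $\sigma_1^2-\sigma_2^2$, is what produces the two branches inside the $\max$ in the statement.

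To recombine, Cauchy--Schwarz gives $\sum_j w_j \norm{\mu-\nu_j}_\mathrm{TV} \le \bigl(\sum_j w_j \norm{\mu-\nu_j}_\mathrm{TV}^2\bigr)^{1/2}$, and linearity in $j$ reduces the weighted sum to a single evaluation at the hypergeometric mean $\sum_j w_j j = nm/(n+m)$. Substituting $|\tau_\mu^2-\tau_j^2| = j(n^2-m^2)|\sigma_1^2-\sigma_2^2|/(n^2 m^2)$, then applying the algebraic identity $\xv J \cdot (n^2-m^2)/(n^2 m^2) = (n-m)/(nm)$ against a case-dependent lower bound for the denominator coming from $\tau_\mu^2$ (bounded below by either $(\sigma_1^2+\sigma_2^2)(n+m)/(2nm)$ or $2\sigma_2^2 \cdot \max(n,m)/(nm)$ as appropriate) should produce exactly the claimed factor $\sqrt{(n-m)/(n+m)}\,|\sigma_2^2-\sigma_1^2|^{1/2}$ together with the $\max$.

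The main obstacle will be the algebraic book-keeping that ensures the two branches of the $\max$ emerge with the stated denominators. In particular, extracting $\sigma_1^2+\sigma_2^2$ rather than, say, $2\sigma_1^2$ in the relevant branch likely requires retaining the full Hellinger expression $1-\sqrt{2ab/(a^2+b^2)}$ slightly longer before bounding, and exploiting the comparison $\sigma_1^2 m + \sigma_2^2 n \ge \min\{2\sigma_2^2,\sigma_1^2+\sigma_2^2\} \cdot (n+m)/2$ to trade the asymmetric combination present in $\tau_\mu^2$ for the symmetric quantities that appear in the theorem. Everything else is mechanical, but this step is where the precise form of the stated bound is earned.
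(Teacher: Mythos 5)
Your architecture mirrors the paper's proof almost step for step: both rest on the hypergeometric mixture representation $\nu=\sum_{j}w_j\nu_j$ of centred Gaussians, a per-component comparison of $\mu$ with $\nu_j$, a linearization in $j$, collapse of the weighted sum via the hypergeometric mean $\sum_j w_j j = nm/(n+m)$, and a final case analysis on the sign of $\sigma_1^2-\sigma_2^2$ when bounding $m\sigma_1^2+n\sigma_2^2$ from below (which is indeed where the $\max$ comes from, not from the per-component step). The genuine difference is the divergence used: the paper computes the symmetric KL-divergence $H(\mu,\nu_j)=\frac14(\tau_\mu/\tau_j-\tau_j/\tau_\mu)^2$ exactly, invokes joint convexity of KL to get $H(\mu,\nu)\le\sum_j w_jH(\mu,\nu_j)$, linearizes via $x^2/(1+x)\le x$, and applies Pinsker once at the very end, whereas you use convexity of TV plus Cauchy--Schwarz and control each $\norm{\mu-\nu_j}_{\mathrm{TV}}$ through the Hellinger distance.

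The gap is in the constant tracking, exactly where you flag the risk. With the inequalities you actually name --- $\norm{P-Q}_{\mathrm{TV}}^2\le 2H^2$ together with $1-\sqrt{r}\le 1-r$ --- you obtain $\norm{\mu-\nu_j}_{\mathrm{TV}}^2\le 2(\tau_\mu-\tau_j)^2/(\tau_\mu^2+\tau_j^2)$, and after forcing linearity in $j$ this is about eight times the paper's per-component quantity $\tfrac12 H(\mu,\nu_j)=(\tau_\mu^2-\tau_j^2)^2/(8\tau_\mu^2\tau_j^2)$; the final bound then exceeds the stated one by a factor of roughly $2\sqrt2$, so the theorem as written is not recovered. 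The route is repairable: write $1-\sqrt{r}=(1-r)/(1+\sqrt{r})$ and use $1+\sqrt{r}\ge 1+r=(\tau_\mu+\tau_j)^2/(\tau_\mu^2+\tau_j^2)$ to get $2H^2\le 2(\tau_\mu-\tau_j)^2/(\tau_\mu+\tau_j)^2=2(\tau_\mu^2-\tau_j^2)^2/(\tau_\mu+\tau_j)^4$, and then $(\tau_\mu+\tau_j)^4\ge 16\tau_\mu^2\tau_j^2$ shows this is at most $\tfrac12 H(\mu,\nu_j)$, after which the paper's linearization and final algebra apply verbatim and the constant $1/2$ is recovered. Separately, your proposed lower bound $\tau_\mu^2\ge 2\sigma_2^2\max(n,m)/(nm)=2\sigma_2^2/m$ is false in general (take $n\gg m$ with $\sigma_1^2$ only slightly larger than $\sigma_2^2$, e.g.\ $n=100$, $m=1$, $\sigma_1^2=1.01$, $\sigma_2^2=1$); the correct case bounds are $m\sigma_1^2+n\sigma_2^2\ge(n+m)\sigma_2^2$ when $\sigma_1^2\ge\sigma_2^2$ and $m\sigma_1^2+n\sigma_2^2\ge(n+m)(\sigma_1^2+\sigma_2^2)/2$ otherwise, which produce the two branches of the $\max$.
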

\begin{proof}[Proof of Theorem \ref{thm:tvTwoGauss}]
  For two equivalent measures, $\mu$ and $\nu$, the 
  Kullback–Leibler Divergence is 
  $
    D_\mathrm{KL}(\mu,\nu) = 
    \int \log(\frac{d\mu}{d\nu})d\mu,
  $
  and the symmetric KL-divergence is defined to be
  $H(\mu,\nu) = 0.5[D_\mathrm{KL}(\mu,\nu) + D_\mathrm{KL}(\nu,\mu)]$.
  From Pinkser's Inequality,
  $$
  \norm{\mu-\nu}_\mathrm{TV} := 
  \sup_{B\in\mathcal{B}}\abs{ \mu(B) - \nu(B) }
  \le
  \sqrt{
    \frac{1}{2} \min\{D_\mathrm{KL}(\mu,\nu),D_\mathrm{KL}(\nu,\mu)\}
  }
  \le 
  \sqrt{
    \frac{1}{2}H(\mu,\nu)
  }.
  $$
  
  For two independent centred 
  univariate Gaussian measures on $\real$, $\gamma_1$ and $\gamma_2$,
  with variances $\sigma_1^2$ and $\sigma_2^2$, the symmetric
  KL-divergence is 
  $$
  H(\gamma_1, \gamma_2) = 
  \frac{1}{2}D_{KL}(\gamma_1|\gamma_2) +
  \frac{1}{2}D_{KL}(\gamma_2|\gamma_1)
  = 
  \frac{\sigma_1^2}{4\sigma_2^2} + 
  \frac{\sigma_2^2}{4\sigma_1^2} - \frac{1}{2} 
  = \frac{1}{4}\left(
    \frac{\sigma_1}{\sigma_2} -
    \frac{\sigma_2}{\sigma_1}
  \right)^2
  $$
  
  In the context of the two-sample t-test for Gaussian 
  data, the measure $\mu$ induced by $T(X)$ has zero mean
  and variance $\sigma_1^2/n + \sigma_2^2/m$.  In turn, 
  the measure $\nu$ averaged over the group $G$ is a 
  weighted mixture of $m$ Gaussian distributions 
  denoted as $\nu = \sum_{j=1}^m w_j\nu_j$.
  All component measures $\nu_j$ have 
  zero mean and a variance
  of 
  $$
    \sigma_1^2\left(\frac{n-j}{n^2}+\frac{j}{m^2}\right) +
    \sigma_2^2\left(\frac{m-j}{m^2}+\frac{j}{n^2}\right)
    =
    \frac{\sigma_1^2}{n} + \frac{\sigma_2^2}{m} +
    j\left(
      {m^{-2}} - {n^{-2}}
    \right)(  
      \sigma_1^2 - \sigma_2^2
    )
  $$
  with hypergeometric weights 
  $
    w_j = {n \choose j}
    {m \choose m-j}
    {n+m \choose m}^{-1}.
  $
  The symmetric KL-divergence between $\mu$ and the 
  $j$th mixture component $\nu_j$ is 
    \begin{align*}
    H(\mu,\nu_j)
  &=\frac{
  j^2\left(
    {m^{-2}} - {n^{-2}}
  \right)^2(  
    \sigma_1^2 - \sigma_2^2
  )^2
  }{
  4\left[\frac{\sigma_1^2}{n} + \frac{\sigma_2^2}{m}\right]
  \left[
    \frac{\sigma_1^2}{n} + \frac{\sigma_2^2}{m} +
    j\left(
      {m^{-2}} - {n^{-2}}
    \right)(  
      \sigma_1^2 - \sigma_2^2
    )
  \right]
  } \\
  &=
  \frac{1}{4}\left(\frac{
    j^2\left(
    {m^{-2}} - {n^{-2}}
    \right)^2(  
    \sigma_1^2 - \sigma_2^2
    )^2
    [{\sigma_1^2}{n^{-1}} + {\sigma_2^2}{m^{-1}}]^{-2}
  }{
    1 + 
    j\left(
    {m^{-2}} - {n^{-2}}
    \right)(  
    \sigma_1^2 - \sigma_2^2
    )
    [{\sigma_1^2}{n^{-1}} + {\sigma_2^2}{m^{-1}}]^{-1}
  }\right)\\
  &=
  \frac{1}{4}\left(\frac{
    C_{n,m}^2j^2\left(
    {m^{-2}} - {n^{-2}}
    \right)^2
  }{
    1 + 
    C_{n,m}j\left(
    {m^{-2}} - {n^{-2}}
    \right)
  }\right).
  \end{align*}
  where 
  $C_{n,m} = (  
    \sigma_1^2 - \sigma_2^2
    )
    [{\sigma_1^2}{n^{-1}} + {\sigma_2^2}{m^{-1}}]^{-1}$
  for notational convenience.  
  
  Joint convexity of the KL-divergence implies that 
  $
    D_{KL}(\mu,\nu) \le
    \sum_{j=1}^M w_j D_{KL}(\mu,\nu_j),
  $
  which translates into the same for $H(\mu,\nu)$.  
Thus, considering the extreme cases of 
$n=m$ and $n\rightarrow\infty$ for $m$ fixed 
results in 
\begin{align*}
    H(\mu,\nu) &\le
  \sum_{j=0}^m
    \frac{
      {n \choose j}{m \choose m-j}
    }{ {n+m \choose m} }
    \frac{1}{4}\left(\frac{
    C_{n,m}^2j^2\left(
    {m^{-2}} - {n^{-2}}
    \right)^2
  }{
    1 + 
    C_{n,m}j\left(
    {m^{-2}} - {n^{-2}}
    \right)
  }\right)\\
  &\le
  \frac{1}{4}
  \sum_{j=0}^m
    \frac{
      {n \choose j}{m \choose m-j}
    }{ {n+m \choose m} }
    C_{n,m}j\left(
    {m^{-2}} - {n^{-2}}
    \right)\\
  &= \frac{1}{4}
    \frac{mn}{n+m}
    \left(
    {m^{-2}} - {n^{-2}}
    \right)
    \abs{  
      \sigma_2^2 - \sigma_1^2
    }
    \left[
      \frac{\sigma_1^2}{n} + \frac{\sigma_2^2}{m}
    \right]^{-1}\\
  &= \frac{1}{4}
    \left(
    \frac{n}{m} - \frac{m}{n}
    \right)\abs{  
      \sigma_2^2 - \sigma_1^2
    }
    \left[
      \left(1+\frac{m}{n}\right)\sigma_1^2 + 
      \left(1+\frac{n}{m}\right)\sigma_2^2
    \right]^{-1}\\
  &= \frac{1}{4}
    \left(
    \frac{n}{m} - \frac{m}{n}
    \right)\abs*{\frac{  
      \sigma_2^2 - \sigma_1^2
      }{
      \sigma_2^2 + \sigma_1^2
      }
    }
    \left[
      1 + 
      \frac{
        {m\sigma_1^2}/{n} + 
        {n\sigma_2^2}/{m}
      }{
        \sigma_2^2 + \sigma_1^2
      }
    \right]^{-1}\\
    &= \frac{1}{4}
    \left(
    n^2-m^2
    \right)\abs*{\frac{  
      \sigma_2^2 - \sigma_1^2
      }{
      \sigma_2^2 + \sigma_1^2
      }
    }
    \left[
      nm + 
      \frac{
        {m^2\sigma_1^2} + 
        {n^2\sigma_2^2}
      }{
        \sigma_2^2 + \sigma_1^2
      }
    \right]^{-1}\\
    &= \frac{1}{4}
    \left(
    \frac{n^2-m^2}{(n+m)^2}
    \right)\abs*{\frac{  
      \sigma_2^2 - \sigma_1^2
      }{
      \sigma_2^2 + \sigma_1^2
      }
    }
    \left[
      \frac{
        {(nm + m^2)\sigma_1^2} + 
        {(nm + n^2)\sigma_2^2}
      }{
        (\sigma_2^2 + \sigma_1^2)
        (n+m)^2
      }
    \right]^{-1}\\
    &= \frac{1}{4}
    \left(
    \frac{n-m}{n+m}
    \right)\abs*{\frac{  
      \sigma_2^2 - \sigma_1^2
      }{
      \sigma_2^2 + \sigma_1^2
      }
    }
    \left[
      \frac{nm + m^2}{(n+m)^2}
      \frac{\sigma_1^2}{\sigma_1^2+\sigma_2^2}
      +
      \frac{nm + n^2}{(n+m)^2}
      \frac{\sigma_2^2}{\sigma_1^2+\sigma_2^2}
    \right]^{-1}\\
    &\le \frac{1}{4}
    \left(
    \frac{n-m}{n+m}
    \right)\abs*{\frac{  
      \sigma_2^2 - \sigma_1^2
      }{
      \sigma_2^2 + \sigma_1^2
      }
    }
    \max\left\{
      2, 1 + \frac{\sigma_1^2}{\sigma_2^2}
    \right\}\\
    &= 
    \max\left\{
    \frac{1}{2}
    \left(
    \frac{n-m}{n+m}
    \right)\abs*{\frac{  
      \sigma_2^2 - \sigma_1^2
      }{
      \sigma_2^2 + \sigma_1^2
      }
    }
    ,
    \frac{1}{4}
    \left(
    \frac{n-m}{n+m}
    \right)\abs*{
      1 -
      \frac{  
      \sigma_1^2
      }{
      \sigma_2^2
      }
    }
    \right\}
  \end{align*}
  Applying Pinsker's inequality from above concludes the proof.
\end{proof}

\subsection{Simulation Experiments}
\label{sec:sims}

The following subsections contain brief simulation
experiments to illustrate the behaviour of the 
above hypothesis tests.

\subsubsection{One Sample Location Test}

To examine Theorem~\ref{thm:oneSample} in a simulation
setting, the exponentially modified Gaussian distribution
(EMGD)
will be considered.  
A random variable $Z$ is said to be EMGD if it can be
written as $Z = X+Y$ where $X$ is Gaussian, $Y$ is 
exponential, and $X$ and $Y$ are independent.
This convolution of Gaussian and exponential distributions 
has some popularity in modelling problems within
chemistry and cellular biology
\citep{GRUSHKA1972,GOLUBEV2010}.
In what follows, $X\dist\distNormal{0}{1}$ and 
$Y\dist\distExp{\lmb}$, and $Z$ will be centred
by $1/\lmb$ to have mean zero.

For sample sizes $n\in\{10,100\}$, samples of $Z_1,\ldots,Z_n$
were generated 200 times for each exponential rate parameter 
$\lmb \in \{\infty, 10, 1, 0.1, 0.01, 0.001\}$ where 
$\lmb = \infty$ corresponds to $Y=0$ almost surely.  
Thus, as $\lmb$ tends towards zero, the skewness of the 
centred EMGD increases.
For each set of simulated $Z_1,\ldots,Z_n$, 
a standard one-sample t-test was performed via the 
\texttt{t.test()} function in the \texttt{stats} R package
\citep{RCODE}.
Secondly, a randomization test was performed by generating
2000 random sign vectors $\veps\in\{\pm1\}^n$
to approximate the value of 
$\xv_X\rho\left(
  g\in G\,:\, 
  T(\pi_gX)>t
  \right)
$
from Theorem~\ref{thm:oneSample}.

Figure~\ref{fig:oneSamp10} and Figure~\ref{fig:oneSamp100}
display the 200 computed p-values for $n=10$ and $n=100$,
respectively.  The t-test p-value is plotted against the
randomization test p-value.  When $n=10$ and the
exponential rate parameter is small, i.e. the skewness
is large, the p-values produced by the two tests 
begin to disagree.  However, when $n=100$, the two 
tests produce nearly identical p-values
regardless of skewness.

\begin{figure}
    \centering
    \includegraphics[width=0.8\textwidth]{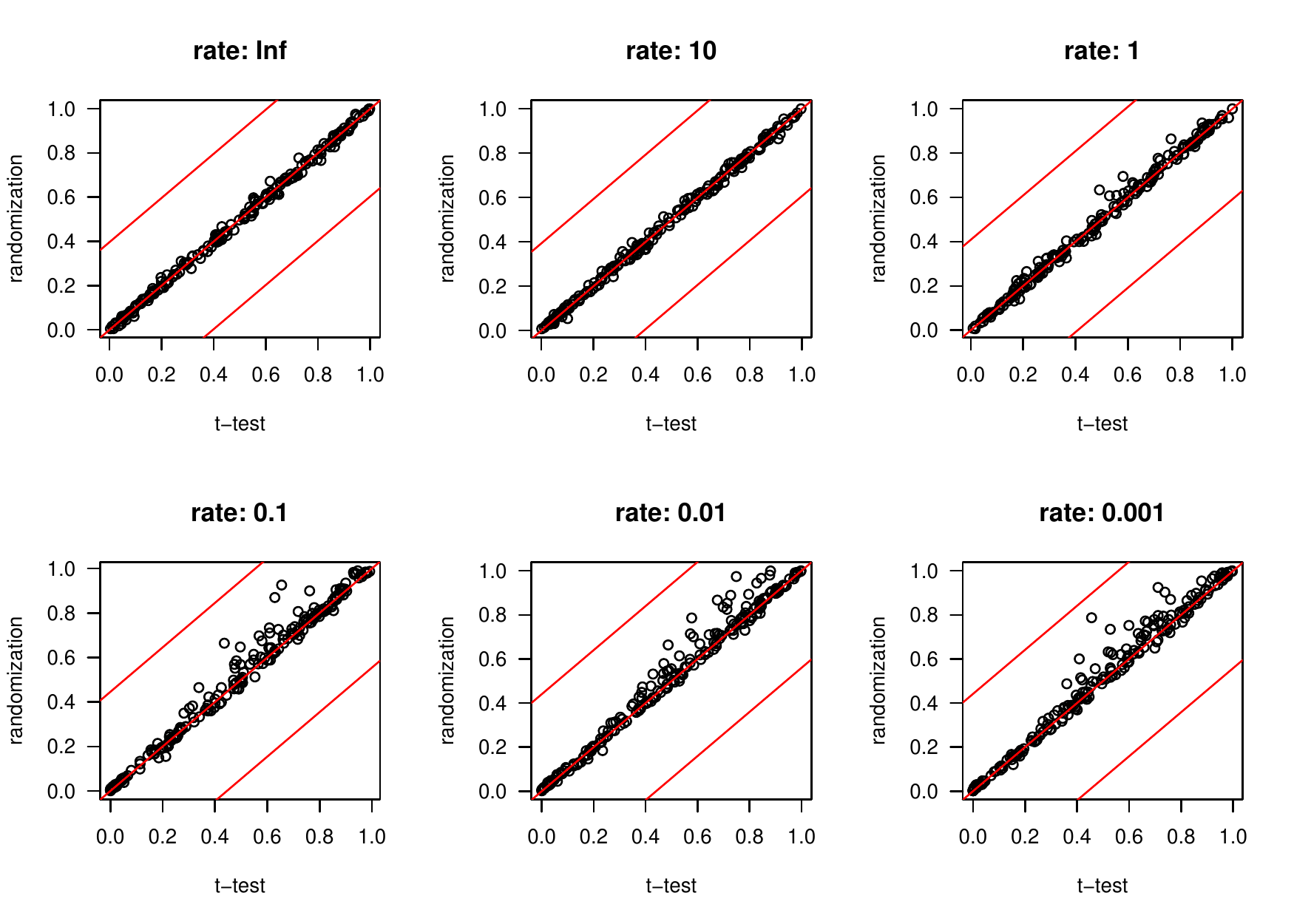}
    \caption{
      \label{fig:oneSamp10}
      A comparison of the two measures in Theorem~\ref{thm:oneSample}
      for $n=10$.  The red lines correspond to the Berry-Esseen 
      bounds.
    }
\end{figure}
    
\begin{figure}
    \centering
    \includegraphics[width=0.8\textwidth]{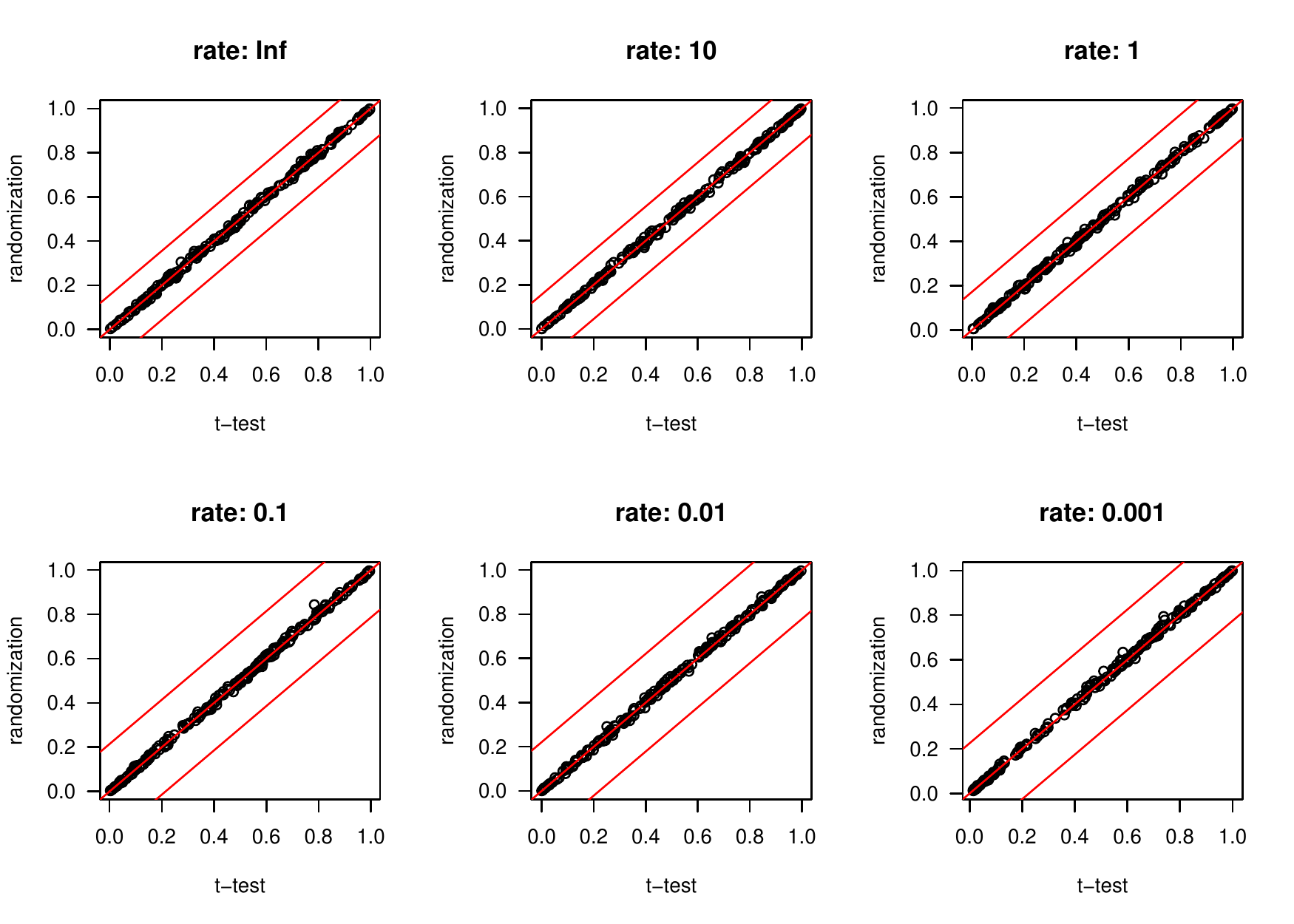}
    \caption{
      \label{fig:oneSamp100}
      A comparison of the two measures in Theorem~\ref{thm:oneSample}
      for $n=100$.  The red lines correspond to the Berry-Esseen 
      bounds.
    }
\end{figure}

\subsubsection{Two Sample t-Test}

To test the performance of the bound derived in 
Theorem~\ref{thm:twoSample}, the permutation test
and Welch's two sample t-test are compared on 
simulated data.
Figure~\ref{fig:cdfComp} compares the cumulative 
distribution functions of $T(X)$ and $T(\pi_gX)$
with respect to the bounds via the L\'evy-Prokhorov
metric for $(n,m)$ values of $(200,100)$,
$(2000,1000)$, and $(20000,10000)$ always in a 
two-to-one ratio.  
For each of these three choices for $(n,m)$, 
200 data vectors are simulated from a Gaussian 
distribution where the larger sample has a variance
of $1$ and the smaller sample has a variance of $16$.
As the total sample size
$N = n+m$ grows large, the difference between the
two cdf functions becomes vanishingly small.

Secondly,
for each of 200 replications, an iid standard Gaussian 
dataset is generated with sample size $n=200$, and 
iid Gaussian datasets
with mean zero, variance 16, and sample sizes $m=25,50,100,200$ are 
generated.  The p-value for 
Welch's two sample t-test, as described above,
is computed in R via the \texttt{t.test()}
function.  The permutation test p-value is computed
via 2000 permutations.
Figure~\ref{fig:twoSamp} displays the result of these
simulations.  When the smaller sample has a larger variance,
the permutation test is anti-conservative.  
That is, it produces p-values that are smaller than desired
and thus would lead to more frequent false rejections of the
null hypothesis.
But as $m$ approaches
$n$, both the discrepancy between the two tests and the 
bounds from Theorem~\ref{thm:tvTwoGauss} vanish.
If the sample with $n=200$ observations came from the population
with the larger 
variance, then the permutation test would instead be too 
conservative and the plots would be reflected across the
diagonal.

\begin{figure}
  \centering
  \includegraphics[width=0.31\textwidth]{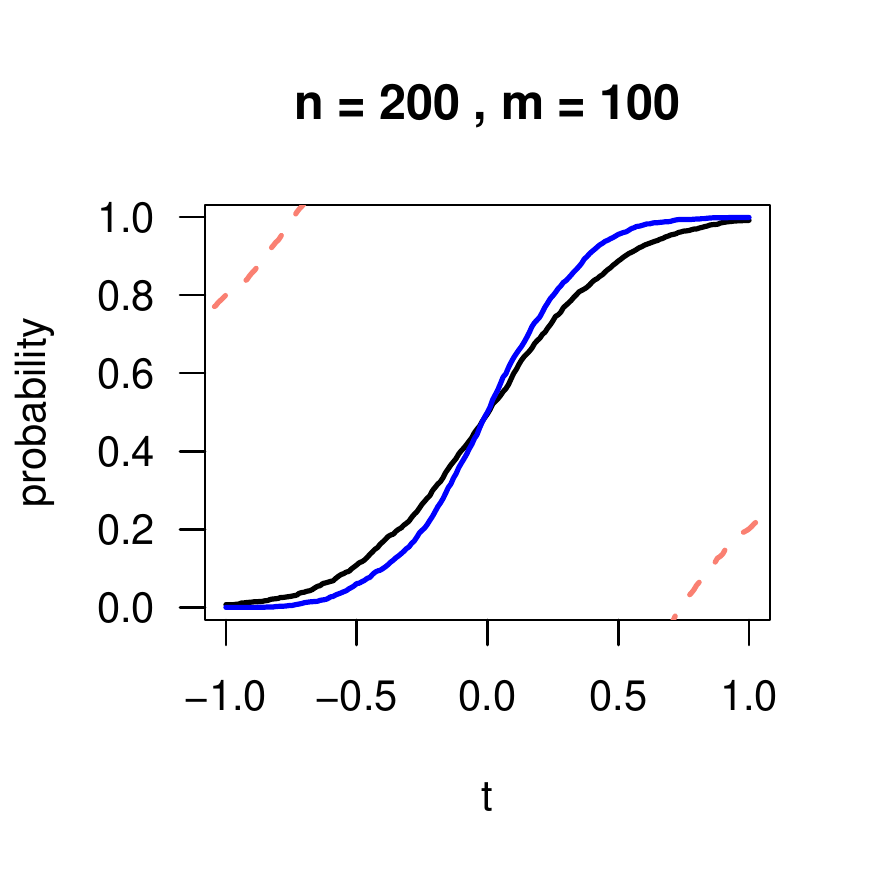}
  \includegraphics[width=0.31\textwidth]{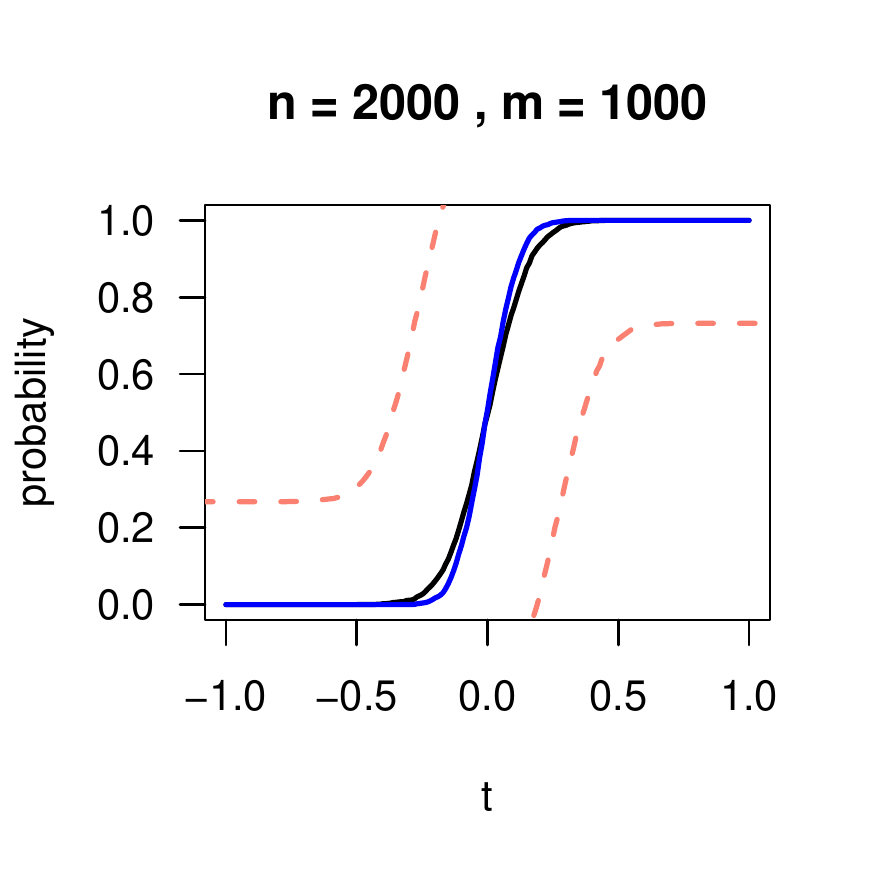}
  \includegraphics[width=0.31\textwidth]{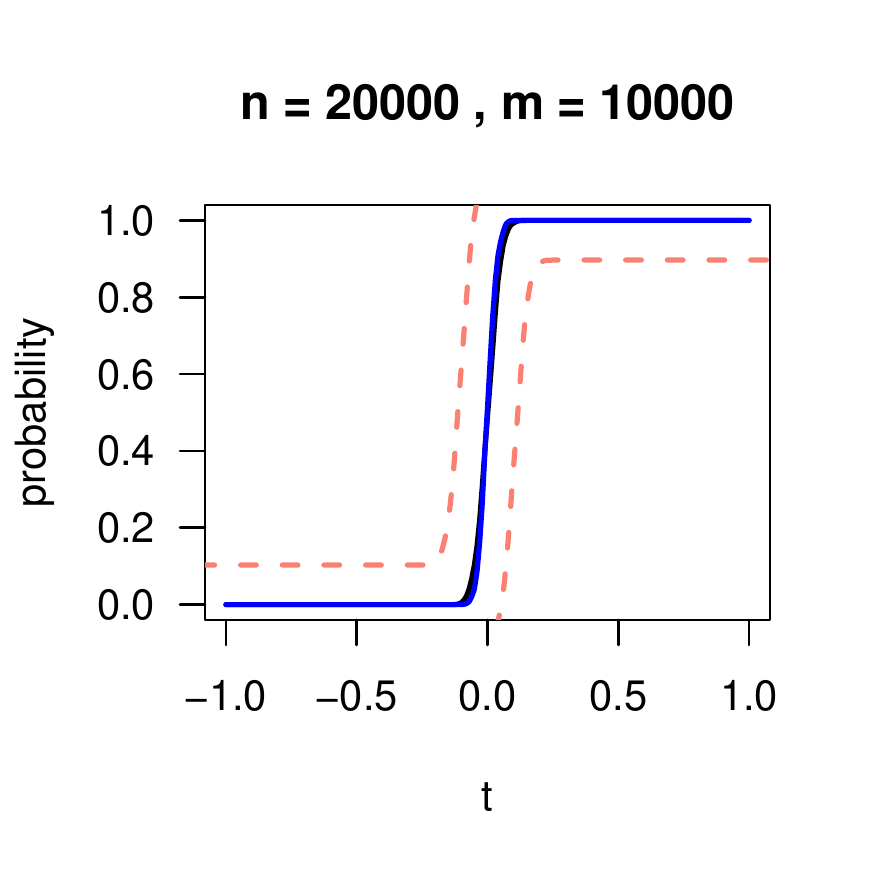}
  \caption{
    \label{fig:cdfComp}  
    A comparison of the cumulative distribution functions
    for $T(X)$ (black) and $T(\pi_g X)$ (blue)
    with the upper and lower bounds (dashed lines)
    from Theorem~\ref{thm:twoSample}.
  }
\end{figure}

\begin{figure}
    \centering
    \includegraphics[width=0.6\textwidth]{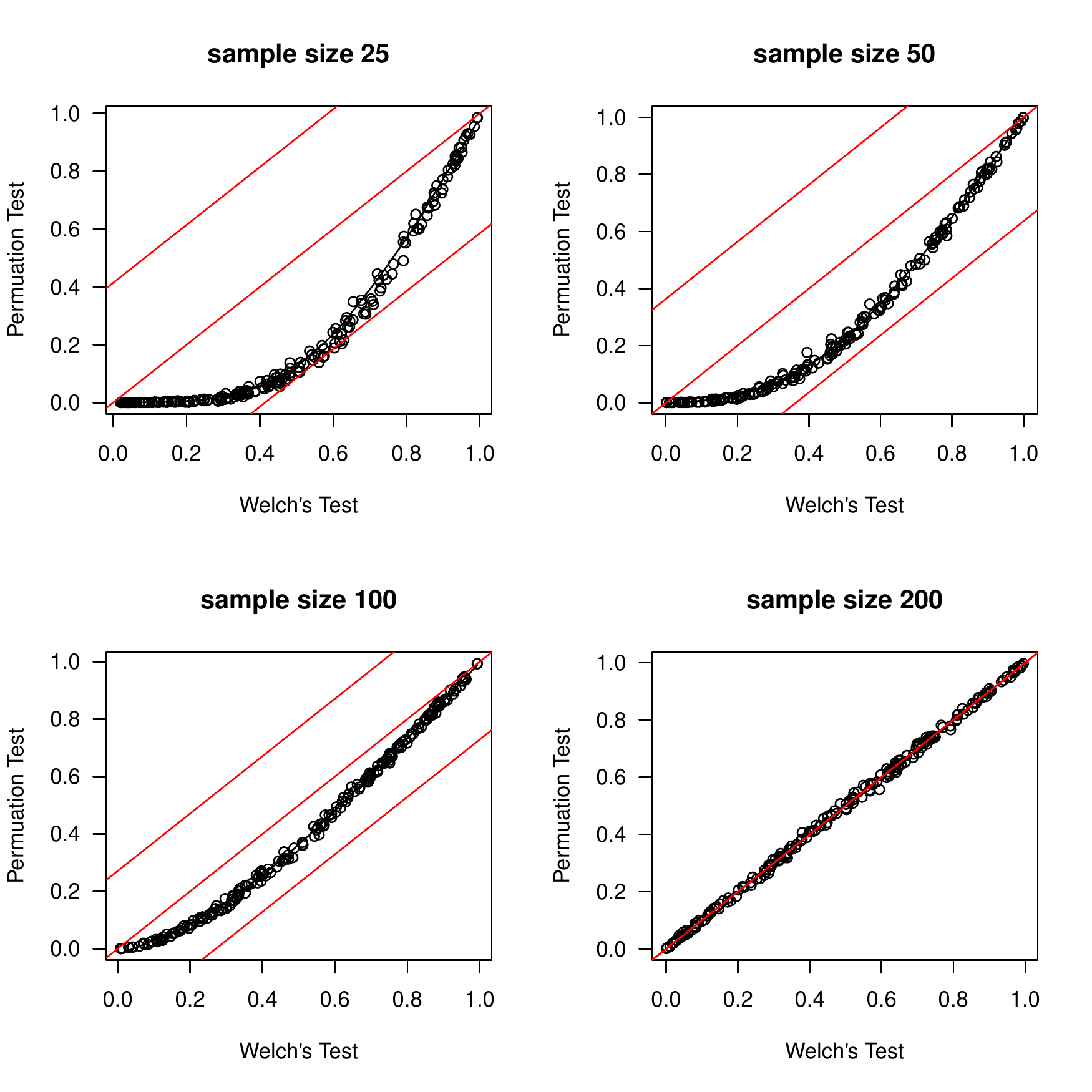}
    \caption{
      \label{fig:twoSamp}
      A comparison of the two measures in Theorem~\ref{thm:tvTwoGauss} 
      for $n=200$ and $m=25,50,100,200$.  
      The red lines correspond to the bounds on the total variation
      norm in Theorem~\ref{thm:tvTwoGauss}.
    }
\end{figure}

\section{Discussion}

It was demonstrated that 
nice functions of high dimensional
random variables are often nearly 
invariant to the actions
of a compact topological group.
This leads to interesting applications
such as random rotations of $\ell_p^n$-balls
allowing for the application of 
concentration inequalities for $SO(n)$
\citep{MECKES2019}.  The motivating
example of statistical hypothesis testing 
demonstrates that randomization tests
can be widely applicable even if the 
invariance assumption does not strictly
hold.

A deeper question in need of future investigation
is that of appropriate group selection.  
Many groups can satisfy 
the theorem conditions outlined in Section~\ref{sec:RandTest}.
However, each will induce a different collection of null 
hypothesis distributions.
As noted, some very recent work has considered
this question \citep{KONING_HEMERIK_2023,KONING2023}.
Secondly, it is not always possible to
parse the properties of a given statistical
test using direct methods.  The equivalence or 
asymptotic equivalence of randomization tests
offers a novel approach to understanding the 
strengths and limitations of a given statistical
test.

While Theorem~\ref{thm:asympInvariance} and 
Corollary~\ref{cor:asympSize} are certainly
useful, the quantitiative investigation in 
Section~\ref{sec:TwoTest} hints that future 
results concerning convergence in total variation
will also be of interest.  In particular, 
a comparison of the conditions required 
to prove such theorem with those presented in 
this work would give insight into different
types of convergence.

Lastly, a broader treatment of locally compact
topological groups is of future interest.
Of course, Haar measure would no longer be
a finite measure leading to problems concerning
how to think about a \textit{random} element
of a locally compact group.

\section*{Acknowledgements}

The author would like to acknowledge Professor Terry Gannon
at the University of Alberta for his helpful discussions on 
Group Theory and Representation Theory, 
Professor Sergii Myroshnychenko at Lakehead University
for recommending Elizabeth Meckes' text 
\textit{The random matrix theory of the classical compact groups}
\citep{MECKES2019},
and students Aneeljyot Alagh and Amitakshar Biswas
at the University of Alberta for 
enjoyable discussions on random rotations.

\appendix

\section{Direct Computation of $\ell_n^p$-balls}
\label{app:lpballs}

As noted in Section~\ref{sec:lpballs}, one can
directly compute the second moment of a uniform
point within an $\ell_n^p$-ball and thus 
derive convergence properties via a standard
application of Chebyshev's
inequality and the first Borel-Cantelli lemma.
Such tedious calculations for the $\ell_n^1$-ball 
and $\ell_n^2$-ball are included here for
completeness.  
Note that the volume of an $\ell^n_p$-ball of
radius $r>0$ is
$$
  \text{vol}_{n,p}(r) = 
  (2r)^n\frac{\Gamma(1+1/p)^n}{\Gamma(1+n/p)},
$$
which appears in a variety of sources such as
\cite{WANG2005,RABIEI2018}.
Thus, 
$\text{vol}_{n,1}(r) = (2r)^n/n!$
and
$\text{vol}_{n,2}(r) = r^n\pi^{n/2}/\Gamma(1+n/2)$.

\subsection{The $\ell^1_n$-ball}

Let $X\in\real^n$ be a uniformly random point
within the $\ell^1_n$-ball; that is, 
$\sum_{i=1}^n\abs{X_i}\le1$.
By symmetry, $\xv X_i = 0$ for all $i$ and
$\xv X_iX_j = 0$ for all $i\ne j$.  For the
second moment,
\begin{align*}
  \xv X_1^2 &=
  \frac{n!}{2^n}\int_{\ell_n^1} x_1^2 dx \\
  &= \frac{n!}{2^n}
     \int_{-1}^1 x_1^2\int_{\ell^1_{n-1}(1-\abs{x_1})}
     dx_{2,\ldots,n}dx_1\\ 
  &= \frac{n!}{2^n}\int_{-1}^1 x_1^2
     \frac{(2(1-\abs{x_1}))^{n-1}}{(n-1)!} dx_1 \\
  &= {n}\int_{0}^1 
     x_1^{3-1}(1-x_1)^{n-1} dx_1 \\
  &= {n}
     \frac{ \Gamma(3)\Gamma(n) }{\Gamma(n+3)}
   = \frac{2}{(n+1)(n+2)}.
\end{align*}
Thus, 
$\var{ \sum_{i=1}^n X_i } = 2n/(n+1)(n+2) \le 2/n$,
and by Chebyshev's inequality, for any $t>0$,
$$
  \prob{
    n^{-q}\abs{\sum_{i=1}^n X_i} \ge t
  } \le
  \frac{2}{n^{1+2q}}.
$$
As $\sum_{i=1}^\infty n^{-1-2q}<\infty$ for any
choice of $q>0$, we have that 
$$
  n^{-q}\abs{\sum_{i=1}^n X_i}\convas0
$$
as a consquence of the first Borel-Cantelli lemma.

\subsection{The $\ell^2_n$-ball}

A similar calculation can be performed for the 
$\ell^2$-ball.
Let $X\in\real^n$ be a uniformly random point
within the $\ell^2_n$-ball; that is, 
$\sum_{i=1}^nX_i^2\le1$.
By symmetry, $\xv X_i = 0$ for all $i$ and
$\xv X_iX_j = 0$ for all $i\ne j$.  For the
second moment,
\begin{align*}
  \xv X_1^2 &=
  \frac{\Gamma(1+n/2)}{\pi^{n/2}}
  \int_{\ell_n^2} x_1^2 dx \\
  &= \frac{\Gamma(1+n/2)}{\pi^{n/2}}
     \int_{-1}^1 x_1^2\int_{\ell^2_{n-1}(\sqrt{1-x_1^2})}
     dx_{2,\ldots,n}dx_1\\ 
  &= \frac{\Gamma(1+n/2)}{\pi^{n/2}}
     \int_{-1}^1 x_1^2
     \frac{(1-{x_1^2})^{(n-1)/2}
     \pi^{(n-1)/2}}{\Gamma(1+(n-1)/2)!} dx_1 \\
  &= \frac{2}{\sqrt{\pi}}
     \frac{\Gamma(1+n/2)}{\Gamma((n+1)/2)}
     \int_{0}^1 
     x_1^{2}(1-x_1^2)^{(n-1)/2} dx_1 \\
  &= \frac{1}{\sqrt{\pi}}
     \frac{\Gamma(1+n/2)}{\Gamma((n+1)/2)}
     \int_{0}^1 
     u^{1/2}(1-u)^{(n-1)/2} du \\
  &= \frac{1}{\sqrt{\pi}}
     \frac{\Gamma(1+n/2)}{\Gamma((n+1)/2)}
     \frac{\Gamma(3/2)\Gamma((n+1)/2)}{\Gamma(n/2+2)}
   = \frac{1}{n+2}
\end{align*}
Thus, 
$\var{ \sum_{i=1}^n X_i } = n/(n+2) \le 1$,
and by Chebyshev's inequality, for any $t>0$,
$$
  \prob{
    n^{-q}\abs{\sum_{i=1}^n X_i} \ge t
  } \le
  \frac{1}{n^{2q}}.
$$
As $\sum_{i=1}^\infty n^{-2q}<\infty$ for any
choice of $q>1/2$, we have that 
$$
  n^{-q}\abs{\sum_{i=1}^n X_i}\convas0
$$
as a consquence of the first Borel-Cantelli lemma.

\section{Simulating uniform points in an $\ell_p^n$ ball}
\label{app:ballSim}

For the sake of general interest and future simulations,
it is shown in this appendix how to generate
uniform random points within an $\ell_p^n$ ball using
the ratio of uniforms method
\citep[Section 3.7 and references therein]{FISHMAN2013}.
This is of interest, in particular, because the naive
acceptance-rejection approach of simulating 
$U = (U_1,\ldots,U_n)$ with entries 
\iid $\distUnifInt{-1}{1}$
and accepting if $\sum_{i=1}^nU_i^p \le 1$
will yield an acceptance with vanishingly small
probability for large $n$
as the volume of the $\ell^n_p$-ball is 
dwarfed by the volume of the hypercube.
That is,
$$
  \frac{\mathrm{vol}_{n,p}(1)}{2^n}
  = \frac{\Gamma(1+1/p)^n}{\Gamma(1+n/p)} \ll 1
$$
for large $n$.

Instead, let $Y = (Y_1,\ldots,Y_n)$ be a vector of 
\iid real valued random variables
with probability density proportional to 
$\exp(-\abs{t}^p)$, and let $Z\dist\distExp{1}$ be
independent of the $Y_i$'s.  Then, the vector
$$
  X = \frac{Y}{\left(
    \sum_{i=1}^n \abs{Y_i}^p + Z
  \right)^{1/p}}
$$
is a uniformly random point within the 
$\ell_p^n$-ball
\citep{BARTHE2005}.
Hence, the problem of generating such $X$'s is 
reduced to generating such $Y_i$'s.
For the special cases of $p=1,2,\infty$ specific
generation methods exist.  Otherwise, 
Algorithm~\ref{algo:ballSim} can be used for arbitrary
$p$.

The ratio of uniforms method is an acceptance-rejection
style algorithm where the ratio of 
two independent uniform random variables $U$ and $V$ 
is shown to have the desired distribution given 
an inequality is satisfied.  In this case,
let $r(t) = \exp(-\abs{t}^p)$, 
$U\dist\distUnifInt{0}{1}$ and 
$V\dist\mathrm{Uniform}[ 
            -\left({2}/{\ee p}\right)^{1/p} 
          , \left({2}/{\ee p}\right)^{1/p}]$.
Then, $Y = V/U$ has probability density proportional
to $\exp(-\abs{t}^p)$ if 
$$
  U^2 \le r( V/U ) = \exp( -\abs{V/U}^p )
$$
Some examples of this acceptance region are 
displayed in Figure~\ref{fig:ratUnif}.
The probability of such a random pair $(U,V)$
satisfying the above inequality can be quickly
calculated to be
$$
  \prob{ \text{Accept for }p } =
  \frac{\Gamma(1+1/p)}{2^{1+1/p}}({\ee p})^{1/p},
$$
which varies between 0.5 and 0.75 for values of $p\ge1$.

\begin{algorithm}[t]
	\caption{
		\label{algo:ballSim}
		Ratio of uniforms method of uniform points
        within the $\ell^n_p$-ball
	}
	\begin{tabbing}    
        \hspace{0.6\textwidth}\=\kill
		\qquad \enspace {\bf Initialize}
		$\eta = 0$ \> (Count how many variates are accepted)\\
		\qquad \enspace 
		{\bf While} $\eta < n$\\
		\qquad\qquad 
          Generate $U_1,\ldots,U_{n-\eta}\distiid
          \distUnifInt{0}{1}$\\
        \qquad\qquad Generate $V_1,\ldots,V_{n-\eta}\distiid
          \distUnifInt{ 
            -\left({2}/{\ee p}\right)^{1/p} 
          }{ \left({2}/{\ee p}\right)^{1/p}}$\\
        \qquad\qquad
          {\bf for each} $i\in\{1,\ldots,n-\eta\}$\\
        \qquad\qquad\qquad  
          {\bf if}\{ $U_i^2 \le \exp( -\abs{V_i/U_i}^p )$\}\\
        \qquad\qquad\qquad \enspace
          Keep the pair $(U_i,V_i)$\\
        \qquad\qquad\qquad \enspace
          $\eta \leftarrow \eta + 1$\\
        \qquad\qquad\qquad  
          {\bf else} \\
        \qquad\qquad\qquad \enspace
          Reject the pair $(U_i,V_i)$\\
        \qquad \enspace 
		   {\bf Return}
          $Y = ( V_1/U_1,\ldots,V_n/U_n )$
	\end{tabbing}
\end{algorithm}

\begin{figure}
  \centering
  \includegraphics[width=0.45\textwidth]{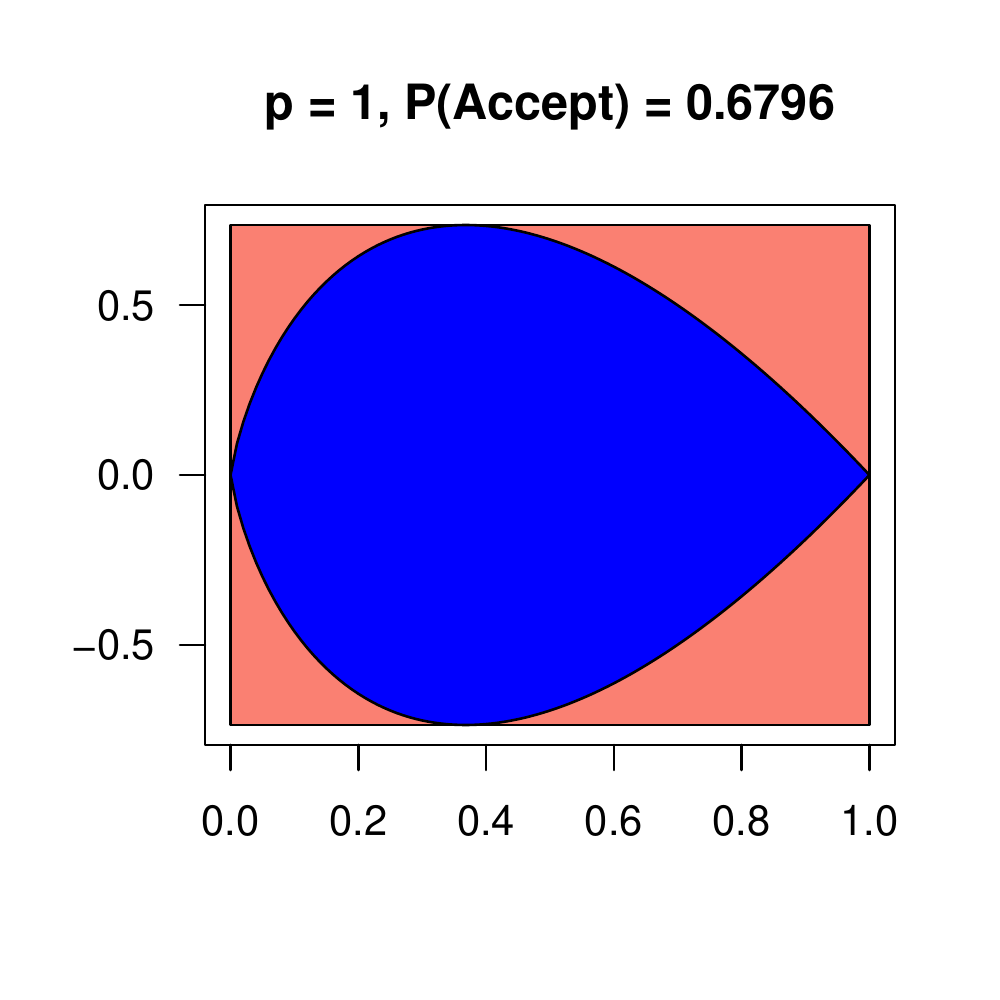}
  \includegraphics[width=0.45\textwidth]{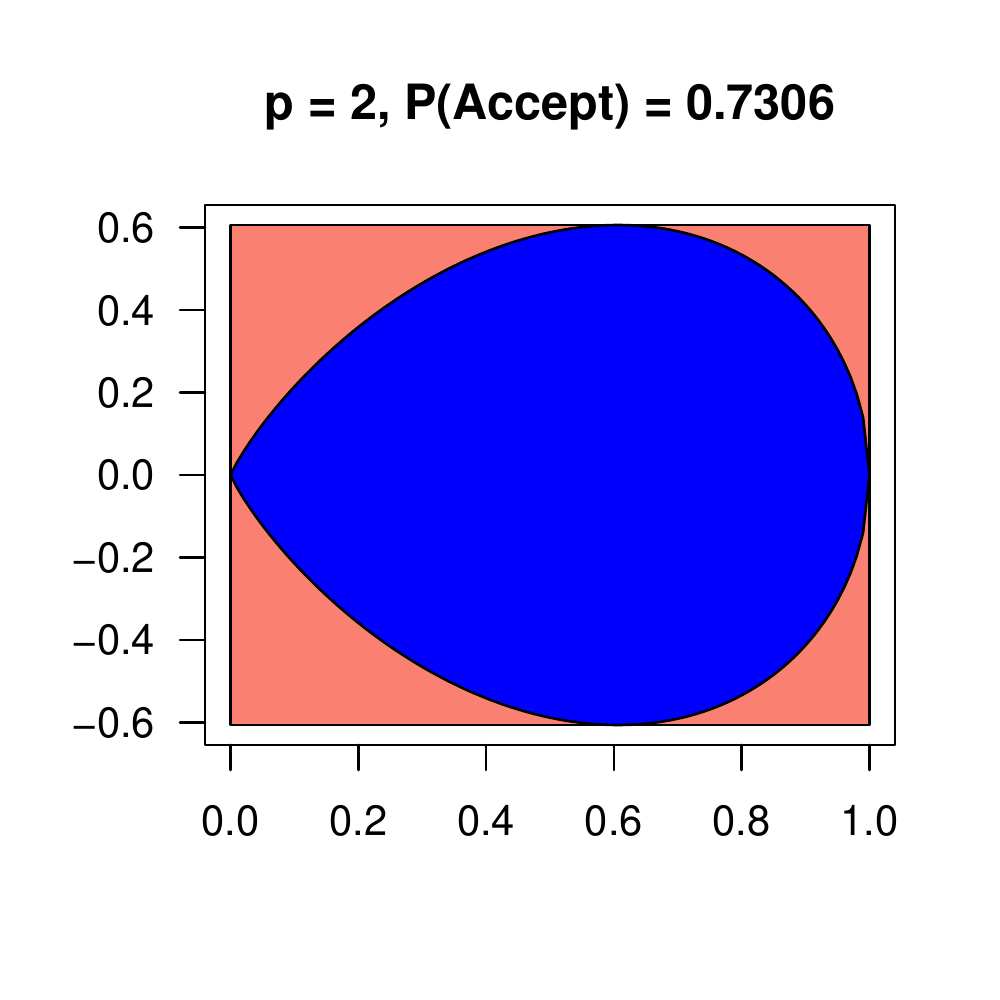}
  \includegraphics[width=0.45\textwidth]{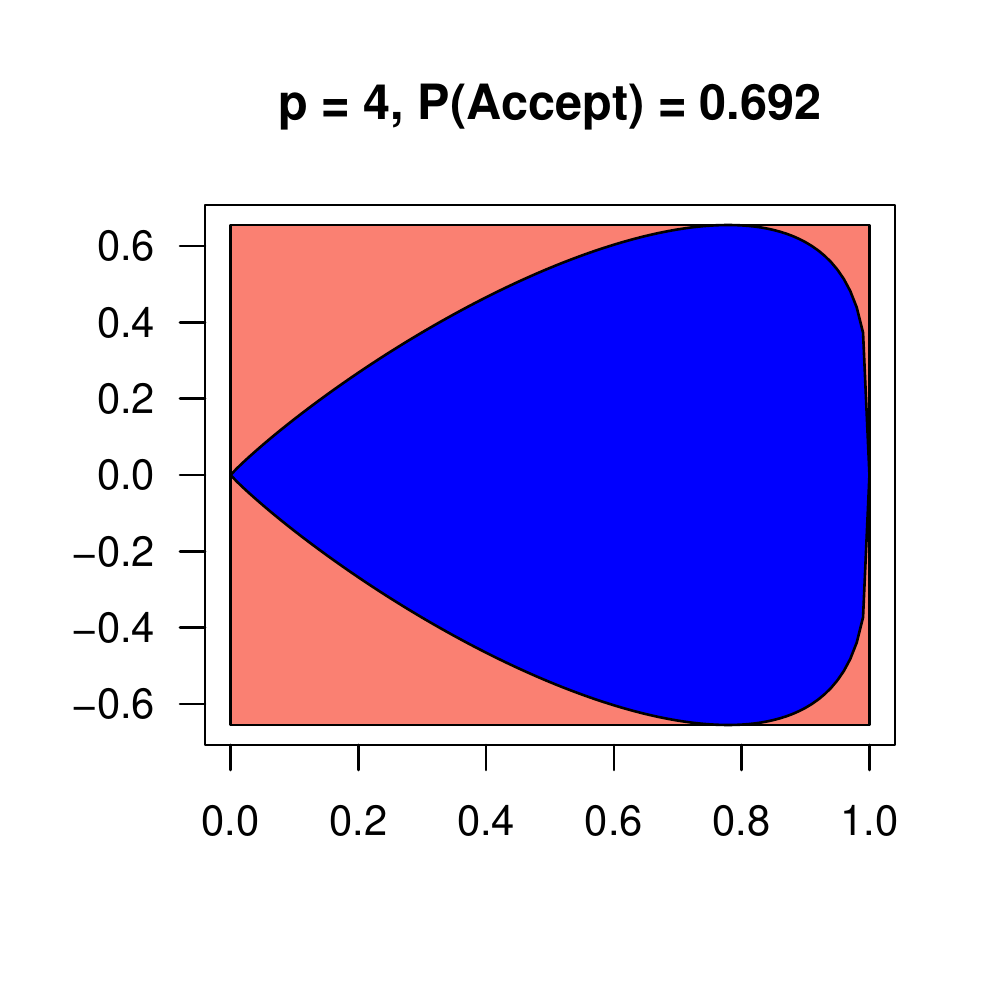}
  \includegraphics[width=0.45\textwidth]{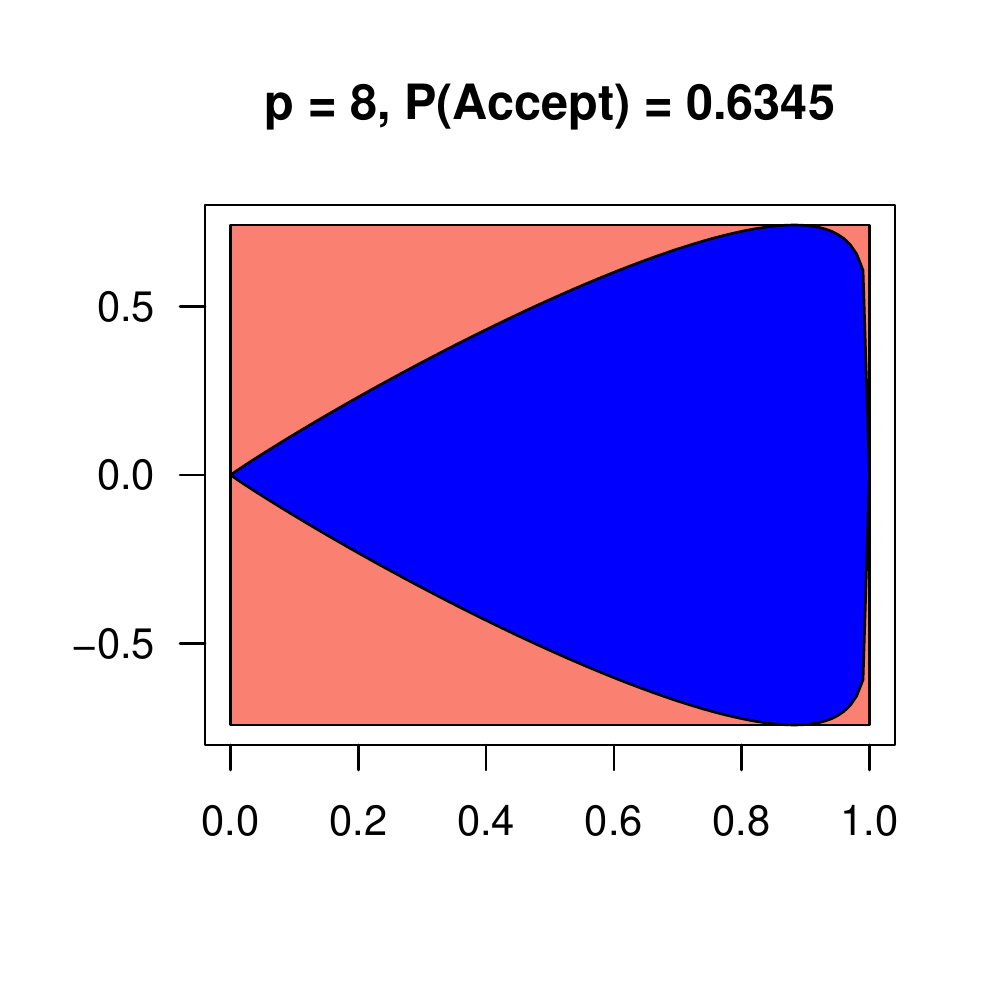}
  \caption{
    \label{fig:ratUnif}
    The acceptance region (blue) and probability for 
    the ratio of uniforms method applied for 
    $p=1,2,4,8$.
  }
\end{figure}

\bibliographystyle{plainnat}
\bibliography{kasharticle,kashbook,kashself,kashpack}

\end{document}